\DeclareMathAlphabet{\mathpzc}{OT1}{pzc}{m}{it}
\newcommand{\otb}{{\overline{\otimes}}}
\newcommand{\op}{\rm{op}}
\newcommand{\rev}{\rm{rev}}
\newcommand{\mo}{{\mathcal M}}
\newcommand\Rep{\operatorname{Rep}}
\newcommand{\ca}{{\mathcal C}}
\newcommand{\Do}{{\mathcal D}}
\definecolor{verde}{rgb}{0.,0.7,0.}
\definecolor{indigo}{rgb}{.18, .34, .78}
\definecolor{indigo1}{rgb}{.18, .24, .78}
\definecolor{indigo2}{rgb}{.18, .14, .78}
\definecolor{indigo3}{rgb}{.18, 0., .78}
\definecolor{rojo}{rgb}{1,0,0}
\definecolor{negro}{rgb}{0,0,0}
\definecolor{lila}{rgb}{.46, .16, .78}
\definecolor{lila1}{rgb}{.46, .16, .86}
\definecolor{lila2}{rgb}{.56, .16, .86}
	\definecolor{lila3}{rgb}{.63, .16, .78}
\definecolor{lila4}{rgb}{.7, .16, .78}
\definecolor{lila5}{rgb}{.78, .26, .78}
\definecolor{lila6}{rgb}{.6, 0., .78}
\theoremstyle{plain}
\newtheorem{thm}{Theorem}[section]
\newtheorem{claim}[thm]{Claim}
\newtheorem{lma}[thm]{Lemma}
\newtheorem{cor}[thm]{Corollary}
\newtheorem{defnlma}[thm]{Definition and Lemma}
\newtheorem{rem}[thm]{Remark}
\newtheorem{prop}[thm]{Proposition}
\newcommand{\qed}{\hfill\quad\fbox{\rule[0mm]{0,0cm}{0,0mm}}  \par\bigskip}
\newcommand{\x}{\mbox{-}}
\newcommand{\R}{{\mathcal R}}
\newcommand{\w}{\hspace{-0,12cm}}
\newcommand{\Br}{{\rm Br}}
\def\congo#1{\smash{\mathop{\cong}\limits^{#1}}}
\def\equal#1{\smash{\mathop{=}\limits^{#1}}}
\newcommand{\Del}{\boxtimes}
\newcommand{\comp}{\circ}
\newcommand{\iso}{\cong}
\newcommand{\ot}{\otimes}
\newcommand{\C}{{\mathcal C}}
\newcommand{\M}{{\mathcal M}}
\newcommand{\D}{{\mathcal D}}
\newcommand{\F}{{\mathcal F}}
\newcommand{\G}{{\mathcal G}}
\newcommand{\HH}{{\mathcal H}}
\newcommand{\N}{{\mathcal N}}
\newcommand{\A}{{\mathcal A}}
\newcommand{\E}{{\mathcal E}}
\def\dul#1{\underline{\underline{#1}}}
\def\u{\underline}
\def\o{\overline}
\newcommand{\coev}{\rm coev}
\newcommand{\Ev}{\rm Ev}
\newcommand{\Ll}{{\mathcal L}}
\newcommand{\Pp}{{\mathcal P}}
\newcommand{\YD}{{\mathcal YD}}
\newcommand{\crta}{\overline}
\newcommand{\ev}{\rm ev}
\newcommand{\db}{\rm db}
\newcommand{\Id}{\operatorname {Id}}
\newcommand{\id}{\operatorname {id}}
\newcommand{\Ker}{\operatorname {Ker}}
\newcommand{\Hom}{\operatorname {Hom}}
\newcommand{\End}{\operatorname {End}}
\newcommand{\Fun}{\operatorname {Fun}}
\def\Nn{{\mathbb N}}
\def\CC{{\mathbb C}}
\def\RR{{\mathbb R}}
\def\Zz{{\mathbb Z}}  
\newcommand{\BM}{\operatorname {BM}}
\newcommand{\Pic}{\operatorname{Pic}}
\newcommand{\BrPic}{\operatorname{BrPic}}
\newcommand{\Gal}{\operatorname{Gal}}
\newcommand{\Mod}{\operatorname{Mod}}
\newcommand{\Bimod}{\operatorname{Bimod}}
\newcommand{\Inv}{\operatorname{Inv}}
\newcommand{\BW}{\operatorname {BW}}
\newcommand{\im}{{\rm Im}\,}
\newcommand{\clabel}[1]{\label{c:#1}}
\newcommand{\cref}[1]{Cor.~\ref{c:#1}}
\newcommand{\rlabel}[1]{\label{r:#1}}
\newcommand{\lelabel}[1]{\label{le:#1}}
\newcommand{\leref}[1]{Lemma~\ref{le:#1}}
\newcommand{\eqlabel}[1]{\label{eq:#1}}
\newcommand{\equref}[1]{(\ref{eq:#1})}
\newcommand{\thlabel}[1]{\label{th:#1}}
\newcommand{\thref}[1]{Theorem~\ref{th:#1}}
\newcommand{\prlabel}[1]{\label{pr:#1}}
\newcommand{\prref}[1]{Proposition~\ref{pr:#1}}
\newcommand{\colabel}[1]{\label{co:#1}}
\newcommand{\rmlabel}[1]{\label{rm:#1}}
\newcommand{\rmref}[1]{Remark~\ref{rm:#1}}
\newcommand{\selabel}[1]{\label{se:#1}}
\newcommand{\seref}[1]{Section~\ref{se:#1}}
\newcommand{\sslabel}[1]{\label{ss:#1}}
\newcommand{\ssref}[1]{Subsection~\ref{ss:#1}}
\begin{document}

\title{Villamayor-Zelinsky sequence for symmetric finite tensor categories}
\author{Bojana Femi\'c \vspace{6pt} \\
{\small Facultad de Ingenier\'ia, \vspace{-2pt}}\\
{\small  Universidad de la Rep\'ublica} \vspace{-2pt}\\
{\small  Julio Herrera y Reissig 565, \vspace{-2pt}}\\
{\small  11 300 Montevideo, Uruguay}}

\date{}
\maketitle
\begin{abstract}
We prove that if a finite tensor category $\C$ is symmetric, then the monoidal category of one-sided $\C$-bimodule categories is symmetric. Consequently, the Picard group of $\C$
(the subgroup of the Brauer-Picard group introduced by Etingov-Nikshych-Gelaki) is abelian in this case. We then introduce a cohomology over such $\C$. An important piece of tool
for this construction is the computation of dual objects for bimodule categories and the fact that for invertible one-sided $\C$-bimodule categories the evaluation functor involved
is an equivalence, being the coevaluation functor its quasi-inverse, as we show. Finally, we construct an infinite exact sequence a la Villamayor-Zelinsky for $\C$. It
consists of the corresponding cohomology groups evaluated at three types of coefficients which repeat periodically in the sequence.
\bigbreak
{\em Mathematics Subject Classification (2010): 18D10, 16W30, 19D23.}

{\em Keywords: Brauer-Picard group, Picard group, finite tensor category, braided monoidal category, cohomology.}
\end{abstract}

\section{Introduction}

The classical Brauer group of a field (introduced in 1929) classifies finite dimensional division algebras, in the way that it consists of equivalence classes of
central simple algebras. Its first generalization is over a commutative ring $R$, where one deals with algebras $A$ satisfying: $A\ot_RA^{op}\iso\End_R(A)$.
The list of further generalizations culminates with the Brauer group of a braided monoidal category $\C$ in \cite{VZ1} in 1998, which consists of equivalence classes of
algebras $A\in\C$ such that $A\ot A^{op}\iso[A,A]$ and $A^{op}\ot A\iso[A,A]^{op}$, where $[A,A]$ denotes the inner hom (such algebras are called Azumaya algebras).
In the following decade various results
have been made concerning the latter group and its subgroups. In 2009, \cite{ENO}, the Brauer-Picard group of a fusion category $\C$ was introduced, it consists
of equivalence classes of semisimple invertible $\C$-bimodule categories $\M$ satisfying: $\M\Del_{\C}\M^{op}\simeq\C$ (and equivalently $\M^{op}\Del_{\C}\M\simeq\C$).
Similarly, the Brauer-Picard group of a {\em finite tensor} category consists of equivalence classes of {\em exact} invertible $\C$-bimodule categories.
This group is involved in the classification of extensions of a given tensor category by a finite group, though it also has relations to mathematical physics,
like rational Conformal Field Theory and 3-dimensional Topological Field Theory, \cite{fsv, KK}.
In \cite{ENO} it is also shown that $\BrPic(Vec_k)\iso\Br(k)$, i.e. the Brauer-Picard group of the category of $k$-vector spaces is isomorphic to the Brauer
group of the field $k$. When $\C$ is braided $\BrPic(\C)$ admits a subgroup $\Pic(\C)$ - the Picard group of $\C$ in which the invertible
categories are one-sided $\C$-module categories (and the action on the other side is induced via the braiding). From \cite{DN} it was known that $\Pic(\C)$
is isomorphic to the Brauer group of {\em exact} Azumaya algebras, and in \cite[Proposition 3.7]{DZ} it is proved that for $\C=\Rep H$, the category of
finite-dimensional representations of a finite-dimensional quasi-triangular Hopf algebra $(H, \R)$, any Azumaya algebra in $\Rep H$ is exact. This proves
that the two groups, different in nature, are isomorphic: $\Pic({}_H\M)\iso\Br(\Rep H)$, where $(H, \R)$ is quasitriangular.

The Brauer group $\Br(k)$ of a field $k$ has a nice cohomological interpretation: it is isomorphic
				to the second Galois cohomology group with respect to the separable closure of the field.
				In the root of this description lies the Crossed Product Theorem relating the relative
				Brauer group $\Br(l/k)$ with the second Galois cohomology group with respect to the
				Galois field extension $l/k$. This cohomological interpretation is possible to transmit
				from the relative to the full	Brauer group because every central simple algebra can be
				split by a Galois field extension. However, this is not the case if we consider
				Galois extensions of commutative rings, not every Azumaya algebra (over a ring) can be
				split by a Galois (ring) extension. Though, instead of the Crossed Product Theorem for
				the relative Brauer group we have
				an infinite exact sequence due to Villamayor and Zelinsky \cite{ViZ} involving cohomology groups for an extension of a commutative ring.
These groups are evaluated at three types of coefficients and the three types of cohomology groups appear periodically in the sequence. If the ring extension is faithfully flat,
the relative Brauer group embeds into the middle term on the second level of the sequence. If the extension is faithfully projective, one has an isomorphism, recovering thus
the Crossed Product Theorem.

In \cite{th} we constructed a version of the above mentioned infinite exact sequence for a commutative bialgebroid (a coring which is an
algebra that fulfills certain compatibility conditions) and we interpreted the middle terms on the first three levels of the sequence.
If $R\to S$ is a commutative ring extension, then 
$S\ot_R S$ is a commutative bialgebroid 
and the new sequence generalizes the previous one.

\medskip

In the present paper we introduce a cohomology for a symmetric finite tensor category $\C$, which we still call Amitsur cohomology.
We specialize it in three types of coefficients, one of which is the Picard group of $\C$ (the subgroup of the Brauer-Picard group).
For this construction we previously prove the following two properties. Firstly, that for a symmetric finite tensor category $\C$ the monoidal 
category of one-sided $\C$-bimodule categories $(\C^{br}\x\Mod, \Del_{\C}, \C)$, which is obtained by truncating the corresponding 2-category, 
is symmetric monoidal. This should not be a surprise, because, as we cited above: $\Pic(\Rep H)\iso\Br(\Rep H)$
for any finite-dimensional quasi-triangular Hopf algebra $H$. It is known that for any symmetric
monoidal category the Brauer group of it is abelian. Therefore, when $H$ is triangular, i.e. $\Rep H$ is symmetric, we get that $\Pic(\Rep H)$ is abelian.
Secondly, we prove that the dual object for an invertible one-sided $\C$-bimodule category is its opposite category and that the evaluation
functor involved is an equivalence, whereas the coevaluation functor is its quasi-inverse.
We then construct an infinite exact sequence a la Villamayor-Zelinsky for $\C$, which consists of the corresponding cohomology groups
evaluated at the mentioned three types of coefficients which repeat periodically in the sequence. The interpretation of some of its
terms we develop in the forthcoming paper \cite{F-VZ2}.

It is known that any symmetric finite tensor category is equivalent to the category
$\Rep H$ of finite-dimensional representations of a finite-dimensional triangular quasi-Hopf algebra $H$. If $H$ is a Hopf algebra, we
say that $\C$ is {\em strong} (in the style of \cite{NR}). As it was proved 
in \cite[Theorem 5.1.1]{AEG}, \cite[Theorem 4.3]{EG}, any finite-dimensional triangular
Hopf algebra over an algebraically closed field of characteristic zero is the Drinfel'd twist of a modified supergroup algebra. The
corresponding representation categories are equivalent, hence we have that any symmetric finite tensor category $\C$ that is strong
is the representation category of a modified supergroup algebra $\Lambda(V)\rtimes kG$ with a triangular structure $\R$.
Collecting the above-said and the results from \cite[Theorem 6.5]{CuF} and \cite{Car2}, we obtain that the corresponding Picard group decomposes as:
$\Pic(\C)\iso\BM(k,kG,\R) \times \Gal(\Lambda(V);{}_{kG}\M)\iso \BM(k,kG,\R) \times S^2(V^*)^G$.
Here $\BM(k,kG,\R)$ is the Brauer group of $G$-graded vector spaces with respect to the braiding induced by $\R$,
the group $S^2(V^*)^G$ is that of symmetric matrices over $V^*$ invariant under the conjugation by elements of $G$, and $\Gal(\Lambda(V);{}_{kG}\M)$ is
the group of $\Lambda(V)$-Galois objects in ${}_{kG}\M$, which are one-sided comodules over $\Lambda(V)$. In particular, for the Sweeder
and the Nichols Hopf algebras $H_4$ and $E(n)$ we have from \cite{CC2} and \cite{VZ2}:
$$\Pic(\Rep(H_4))\iso \BW(k) \times (k,+)\quad\textnormal{and}\quad \Pic(\Rep(E(n))\iso \BW(k) \times (k,+)^{n(n+1)/2}$$
where $\BW(k)$ denotes the Brauer-Wall group (the corresponding Azumaya algebras are $\Zz_2$-graded). For any finite group $G$ it is
$\Pic(\Rep G)\iso H^2(G, k^{\times})$, \cite[Corollary 8.10]{Gr}.

\medskip

The paper is organized as follows. In the Preliminaries we recall some definitions 
and prove one basic result. In the third section we study the left and right module structures over a Deligne tensor product of two finite tensor categories: $\C\Del\D$
and how they are related to a $\C\x\D$-bimodule structure. We prove in \prref{sim bimod} that the monoidal category $(\C^{br}\x\Mod, \Del_{\C}, \C)$
of one-sided bimodule categories over a symmetric
finite tensor category $\C$ is symmetric. The fourth section is dedicated to the study of dual objects in the monoidal category of bimodule categories over any finite
tensor category. We prove here that the evaluation functor is an equivalence functor for invertible bimodule categories.
In \seref{Amitsur coh} we introduce our Amitsur cohomology over symmetric finite tensor categories.
Our infinite exact sequence a la Villamayor-Zelinsky is constructed in the last section in \thref{VZ}, which is the main result of this paper. Finally,
we record that any symmetric finite tensor category is the representation category of a Drinfel'd twist of a modified supergroup algebra
and how the corresponding Picard group decomposes into the above-mentioned direct product.

\section{Preliminaries and notation}

Throughout $I$ will denote the unit object in a monoidal category $\C$ and $k$ an algebraically closed field of characteristic zero.
When there is no confusion we will denote the identity functor on a category $\M$ by $\M$. We proceed to recall some definitions and basic properties.

A {\em finite category} over $k$ is a $k$-linear abelian category equivalent to a category of finite-dimensional representations of a finite-dimensional $k$-algebra.
A {\em tensor category} over $k$ is a $k$-linear abelian rigid monoidal category such that the unit object is simple. An object $X$ is said to be {\em simple}
if $\End(X)=k\Id_X$.
A {\em finite tensor category} is a tensor category such that the underlying category is finite. For example, if $H$ is a finite-dimensional Hopf algebra
(or, more generally, a finite-dimensional quasi-Hopf algebra) the category of its representations $\Rep H$ is a finite tensor category.

All tensor categories will be assumed to be over a field $k$, all categories will be finite and all  functors will be assumed to be $k$-linear.

We assume the reader is familiar with the notions of a left, right and bimodule categories over a tensor category, (bi)module functors,
Deligne tensor product of finite abelian categories, tensor product of bimodule categories and exact module categories. For the respective definitions we refer to
\cite{EO}, \cite{ENO}, \cite{Gr}, \cite{EGNO}.
The Deligne tensor product bifunctor $-\Del-$ and the action bifunctor for module categories $-\crta\ot-$ are biexact in both variables \cite[Proposition 1.46.2]{EGNO}.

\medskip

For finite tensor categories $\C, \D$ a $\C\x\D$-bimodule category is a left $\C\Del\D^{rev}$-module category and
a right $\C^{rev}\Del\D$-module category. Here $\C^{rev}$ is the category with the tensor product reversed with respect to that of $\C$: $X\ot^{\rev} Y= Y\ot X$,
and the associativity constraint $a^{\rev}_{X,Y,Z}=a^{-1}_{Z,Y,X}$ for $X, Y, Z\in \C.$
A $(\ca, \Do)$-bimodule category is \emph{exact} if it is exact as a left $\ca\boxtimes \Do^{\rev}$-module category,  \cite{ENO}, \cite{Gr}.

\medbreak

{\bf $\C$-balanced functors}. Let $\M$ be a right $\C$-module and $\N$ a left $\C$-module category.
For any abelian category $\A$ a bifunctor $F: \M\times\N\to\A$ additive in every argument is called {\em $\C$-balanced}
if there are natural isomorphisms $b_{M,X,N}: F(M\crta\ot X, N) \stackrel{\iso}{\to} F(M, X\crta\ot N)$ for all $M\in\M, X\in\C, N\in\N$  s.t.
\begin{equation} \eqlabel{C-balanced}
\scalebox{0.84}{
\bfig \hspace{-1cm}
\putmorphism(-200,500)(1,0)[F((M\crta\ot X)\crta\ot Y, N)` F(M \crta\ot(X\ot Y), N)`F(m_{M,X,Y}^r, N)]{1500}{-1}a
\putmorphism(1300,500)(1,0)[\phantom{(X \ot (Y \ot U)) \ot W}`F(M,(X\ot Y)\crta\ot N)` b_{M,X \ot Y,N}]{1350}1a
\putmorphism(2870,500)(0,-1)[``F(M, m_{X,Y,N}^l)]{500}1l
\putmorphism(-160,500)(0,-1)[``b_{M\crta\ot X,Y,N}]{500}1r
\putmorphism(-200,0)(1,0)[F((M\crta\ot X), Y\crta\ot N)` F(M, X \crta\ot(Y \crta\ot N))` b_{M,X,Y \crta\ot N}]{2960}1b
\efig}
\end{equation}
commutes. 
\medskip

A {\bf $\C$-balanced natural transformation} $\Psi: F\to G$ between two $\C$-balanced functors $F, G: \M\times\N\to\A$ with their respective balancing isomorphisms $f_X$ and $g_X$,
is a natural transformation such that the following diagram commutes:
\begin{equation}  \eqlabel{C balanced nat tr} \hspace{-1cm} 
\scalebox{0.88}{\bfig
 \putmorphism(0,400)(1,0)[F((M\crta\ot X), N)` F(M, (X\crta\ot N))`f_{M,X,N}]{1500}1a
 \putmorphism(0,0)(1,0)[G((M\crta\ot X), N) ` G(M, (X\crta\ot N)).` g_{M,X,N}]{1500}1a
\putmorphism(0,400)(0,-1)[\phantom{B\ot B}``\Psi(M\crta\ot X, N)]{380}1l
\putmorphism(1500,400)(0,-1)[\phantom{B\ot B}``\Psi(M, X\crta\ot N)]{380}1r
\efig}
\end{equation}

\medbreak

{\bf Relative tensor product}. Let $\C, \D, \E$ be finite tensor categories. For a $\C\x\D$-bimodule category $\M$ and a $\D\x\E$-bimodule category $\N$ the tensor product over $\D$:
$\M\boxtimes_{\D}\N$ is a $\C\x\E$-bimodule category with a right exact $\D$-balanced functor $\pi_{\M,\N}: \M\Del\N\to \M\Del_{\D}\N$ universal for right exact $\D$-balanced 
functors from $\M\Del\N$. By $\D$-balance of  $\pi_{\M,\N}$ there is an isomorphism $\beta_{M,X,N}: (M\crta\ot X)\Del_{\C}N\to M\Del_{\C}(X\crta\ot N)$ which satisfies: 
\begin{equation} \eqlabel{id C-balanced}
\beta_{M,X,Y \crta\ot N} \beta_{M\crta\ot X,Y,N} ({m_r}_{M,X,Y}\Del_{\C} N) = (M\Del_{\C}{m_l}_{X,Y,N})\beta_{M,X \ot Y,N}
\end{equation}
Given an $\E\x\F$-bimodule category $\Pp$, there is a canonical equivalence of $\C\x\F$-bimodule
categories: $(\M\Del_{\D}\N)\Del_{\E}\Pp\simeq\M\Del_{\D}(\N\Del_{\E}\Pp)$, \cite[Remark 3.6]{ENO}. 

\medbreak

 A right $\ca$-module category $\M$ gives rise to a left $\C$-module category $\mo^{op}$ with the action
given by \equref{left op} and associativity isomorphisms $m^{op}_{X,Y,M}= m_{M,{}^*Y, {}^*X}$ for all $X, Y\in \ca, M\in \mo$.
Similarly, a left $\ca$-module category $\mo$ gives rise to a right $\ca$-module category $\mo^{op}$ with the action 
given via \equref{right op}. Here ${}^*X$ denotes
the left dual object and $X^*$ the right dual object for $X\in\C$. If $\mo$ is a $(\ca,\Do)$-bimodule category then $\mo^{op}$ is a
$(\Do,\ca)$-bimodule category and $(\mo^{op})^{op}\iso\M$ as $(\ca,\Do)$-bimodule categories. \vspace{-1cm}
\begin{center}
\begin{tabular}{p{4.8cm}p{2cm}p{5.8cm}}
\begin{eqnarray}  \eqlabel{left op}
X\crta\ot^{op}M=M\otb {}^*X
\end{eqnarray}  & &
\begin{eqnarray} \eqlabel{right op}
M\crta\ot^{op}X=X^*\crta\ot M
\end{eqnarray}
\end{tabular}
\end{center} \vspace{-0,7cm}

A $(\ca, \Do)$-bimodule category $\mo$ is called \emph{invertible} \cite{ENO} if
there are  equivalences of bimodule categories
$$\mo^{\op}\boxtimes_{\ca} \mo\simeq \Do, \quad \mo\boxtimes_{\Do} \mo^{\op}\simeq \ca.$$


The Brauer-Picard group of a fusion category, introduced in \cite{ENO}, is a group of equivalence classes of semisimple invertible module categories.
In a more general setting, the Brauer-Picard group $\BrPic(\C)$ of a finite tensor category $\C$ is a group of equivalence classes of exact invertible module categories
(a module category over a fusion category is exact if and only if it is semisimple \cite[Example 3.3 (iii)]{EO}).

\medskip

{\bf One-sided $\C$-bimodule categories.}
When $\C$ is braided with a braiding $\Phi$, then every left $\C$-module category is a right and a $\C$-bimodule category:
$M\crta\ot X=X\crta\ot M$ with the isomorphism functors
$m_{M,X,Y}^r: M\crta\ot (X\ot Y)\to(M\crta\ot X)\crta\ot Y$ defined via:
\begin{equation} \eqlabel{right associator}
\scalebox{0.84}{
\bfig 
\putmorphism(0,500)(1,0)[M \crta\ot (X\ot Y)	`(M\crta\ot X)\crta\ot Y` m_{M,X,Y}^r]{2660}1a
\putmorphism(0,0)(1,0)[(X \w\ot\w Y) \crta\ot M` (Y\ot X)\crta\ot M` \Phi_{X,Y}\crta\ot M]{1450}1b
\putmorphism(1400,0)(1,0)[\phantom{(X \ot (Y \ot U))}`Y \crta\ot (X\crta\ot M),` m_{Y,X,M}^l]{1250}1b
\putmorphism(60,500)(0,1)[`` =]{500}1l 
\putmorphism(2670,500)(0,1)[`` =]{500}1r 
\efig}
\end{equation}
see \cite[Section 2.8]{DN}. 
Moreover, the $\C$-bimodule associativity constraint is given by:
\begin{equation} \eqlabel{mixed assoc}
\scalebox{0.84}{
\bfig \hspace{-1cm}
\putmorphism(0,500)(1,0)[(X\crta\ot M) \crta\ot Y	`X\crta\ot (M\crta\ot Y)` a_{X,M,Y}]{3100}1a
\putmorphism(0,0)(1,0)[Y\crta\ot (X\crta\ot M)` (Y\w\ot X)\crta\ot M`(m^l)^{-1}_{Y,X,M}]{1050}1b
\putmorphism(1000,0)(1,0)[\phantom{(X \ot (Y \ot U))}`(X\w\ot Y)\crta\ot M` \Phi_{Y,X}\ot M]{1100}1b
\putmorphism(2100,0)(1,0)[\phantom{(X \ot (Y \ot U))}`X \crta\ot (Y\crta\ot M)` m_{X,Y,M}^l]{1000}1b
\putmorphism(60,500)(0,1)[``=]{500}1l 
\putmorphism(3100,500)(0,1)[``=]{500}{-1}r 
\efig}
\end{equation}
for all $X,Y\in\C, M\in\M$. The $\C$-bimodule categories obtained in this way are called {\em one-sided $\C$-bimodule categories.}

For a braided finite tensor category $\C$ exact invertible one-sided $\C$-bimodule categories form a subgroup of $\BrPic(\C)$, called the
Picard group of $\C$ and denoted by $\Pic(\C)$, \cite[Section 4.4]{ENO}, \cite[Section 2.8]{DN}.

\subsection{Deligne tensor product categories as (braided) monoidal categories} \sslabel{product cats}

We start with the following:

\begin{lma} \lelabel{Weq}
Let $\C$ and $\D$ be finite tensor categories. Given a right $\C$-module category $\M$, a left $\C$-module category $\M'$,
a right $\D$-module category $\N$ and a left $\D$-module category $\N'$, there is an isomorphism of categories:
$$(\M\Del_{\C}\M')\Del(\N\Del_{\D}\N')\iso(\M\Del\N)\Del_{\C\Del\D}(\M'\Del\N').$$
\end{lma}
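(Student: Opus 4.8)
The natural approach is to exhibit a canonical comparison functor between the two sides and to show it is an equivalence by checking it on generators, using the universal properties of the two kinds of relative tensor product ($\Del$ over a field, handled by Deligne's construction, and $\Del_{\C}$, $\Del_{\D}$, $\Del_{\C\Del\D}$, which are defined as receiving the universal balanced functor). Concretely, I would first recall that $\M\Del\N$ is a $\C\Del\D$-module category in the obvious factorwise way, that $\M'\Del\N'$ likewise carries a $\C\Del\D$-action, and that the assignment $(M\Del N, M'\Del N')\mapsto (M\Del_{\C}M')\Del(N\Del_{\D}N')$ on objects extends to a functor out of $(\M\Del\N)\times(\M'\Del\N')$. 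The balancing isomorphism for this functor with respect to the $\C\Del\D$-action is built from the individual balancings of $\M\Del_{\C}\M'$ (for the $\C$-part) and $N\Del_{\D}N'$ (for the $\D$-part); since the $\C\Del\D$-action on $\M\Del\N$ is $X\Del Y$ acting as $(-\crta\ot X)\Del(-\crta\ot Y)$, the balancing just splits as the external product of the two balancings. This gives, by the universal property of $\Del_{\C\Del\D}$, a canonical functor $\Theta\colon (\M\Del\N)\Del_{\C\Del\D}(\M'\Del\N')\to(\M\Del_{\C}\M')\Del(\N\Del_{\D}\N')$.

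**Constructing the inverse.** In the other direction I would use the universal property of Deligne's $\Del$ (over $k$): a right-exact-in-each-variable functor $(\M\Del_{\C}\M')\times(\N\Del_{\D}\N')\to (\M\Del\N)\Del_{\C\Del\D}(\M'\Del\N')$ is needed, which I get by first producing, for fixed objects, a $\C$-balanced functor in the $\M,\M'$ variables and a $\D$-balanced functor in the $\N,\N'$ variables landing in the target, then invoking the universal properties of $\Del_{\C}$ and $\Del_{\D}$ successively, and finally the universal property of $\Del$. The candidate on objects is $((M\Del_{\C}M'),(N\Del_{\D}N'))\mapsto (M\Del N)\Del_{\C\Del\D}(M'\Del N')$. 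The compatibility one must feed in is exactly the identity $(M\crta\ot X)\Del_{\C}M'=M\Del_{\C}(X\crta\ot M')$ combined with $(N\crta\ot Y)\Del_{\D}N'=N\Del_{\D}(Y\crta\ot N')$, together with the fact that $(m\Del n)\crta\ot(X\Del Y)$ in $\M\Del\N$ is $(m\crta\ot X)\Del(n\crta\ot Y)$, so that the two balancings glue consistently over $\C\Del\D$. Then I would check $\Theta$ and this functor are mutually quasi-inverse by evaluating both composites on the "pure tensor" objects $M\Del_{\C}M'$, $N\Del_{\D}N'$ and on $m\Del n$; since such objects generate under (bi)exact functors and colimits, and since both relative tensor products are determined up to unique equivalence by their universal property, the two composites are canonically isomorphic to the respective identities.

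**The main obstacle.** The genuinely delicate point is \emph{coherence}: one must verify that the balancing isomorphism assembled on $(\M\Del\N)\times(\M'\Del\N')$ from the two separate balancings actually satisfies the pentagon-type axiom \equref{C-balanced} with respect to the $\C\Del\D$-module associativity constraint, which itself is the external product of the associativity constraints of the $\C$- and $\D$-actions. This is where the bookkeeping lives: the associator $m^r$ for the $\C\Del\D$-action on $\M\Del\N$ factors as $m^r_{\M}\Del m^r_{\N}$ (and similarly $m^l$ on $\M'\Del\N'$), so the hexagon for the combined balancing decomposes into the two hexagons \equref{C-balanced} for $\M\Del_{\C}\M'$ and for $\N\Del_{\D}\N'$ tensored together — but making this decomposition precise requires care with the interchange law for $\Del$ and with the identification $\Del(\text{morphisms})$. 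I expect no conceptual difficulty beyond this; the biexactness of $-\Del-$ and $-\crta\ot-$ quoted from \cite[Proposition 1.46.2]{EGNO} guarantees that the universal properties apply, and everything else is a diagram chase. A clean way to organize it is to note that both sides co-represent the same 2-functor on finite abelian categories, namely $\A\mapsto\{$functors $\M\times\N\times\M'\times\N'\to\A$ that are right exact in each variable, $\C$-balanced in $(\M,\M')$ and $\D$-balanced in $(\N,\N')\}$, and then uniqueness of co-representing objects gives the isomorphism of categories immediately.
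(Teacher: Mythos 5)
Your proposal follows essentially the same route as the paper: one direction is induced from the $\C\Del\D$-balanced functor $(M\Del N,M'\Del N')\mapsto(M\Del_{\C}M')\Del(N\Del_{\D}N')$ (the paper's $F$), and the inverse is built by successively invoking the universal properties of $\Del_{\D}$, $\Del_{\C}$ and $\Del$ to define $((M\Del_{\C}M'),(N\Del_{\D}N'))\mapsto(M\Del N)\Del_{\C\Del\D}(M'\Del N')$ (the paper's $F_3$), with the same coherence checks left as the only real bookkeeping. Your closing observation that both sides corepresent the same 2-functor is a reasonable streamlining, but it does not change the substance of the argument.
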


\begin{proof}
We will use \cite[Remark 3.2]{ENO} for the definition of a balanced functor.
The functor $F:\M\Del\N\Del\M'\Del\N'\to(\M\Del_{\C}\M')\Del(\N\Del_{\D}\N')$ defined via
$F(M\Del N, M'\Del N'):=(M\Del_{\C} M')\Del(N\Del_{\D} N')$ is $\C\Del\D$-balanced. Indeed, the balancing isomorphism is identity:
$F((M\Del N)\crta\ot(C\Del D), M'\Del N')=F((M\crta\ot C)\Del (N\crta\ot D), M'\Del N')=
((M\crta\ot C)\Del_{\C}M')\Del ((N\crta\ot D)\Del_{\D}N')=(M\Del_{\C}(C\crta\ot M'))\Del(N\Del_{\D}(D\crta\ot N'))=
F(M\Del N, (C\Del D)\crta\ot (M'\Del N'))$, and it is clear that the $\C\Del\D$-coherence for $F$ follows.

For the inverse functor observe the following diagram. The equivalences in the right vertical arrows are due to \cite[Lemma 4.1]{Gr}.
The functor $F_3$ is well-defined if $F_2$ is $\C$-balanced, whereas $F_2$ is well-defined if $F_1$ is $\D$-balanced. The check of the last two conditions
is easy and we leave it to the reader.

\hspace{-1cm}
\scalebox{0.86}{
$$
\bfig
\putmorphism(-1730,-120)(1, 0)[\M\Del\M'\Del\N\Del\N'`(\M\Del\M'\Del\N)\Del_{\D}\N'`\Pi_{\D}]{1200}1l
\putmorphism(-2100, -160)(2, -1)[\phantom{A\ot (A\#H)\ot (A\#H)}` `F_1]{2580}1l
\putmorphism(-1730,-120)(0, -1)[`\M\Del\N\Del\M'\Del\N'`]{1400}0r
\putmorphism(-2030,-120)(0, -1)[``\Id\Del t\Del\Id]{1400}1r
\putmorphism(-430,-120)(0,-1)[``\simeq]{350}1r
\putmorphism(-400,-480)(1, 0)[\M\Del((\M'\Del\N)\Del_{\D}\N')` \M\Del_{\C}((\M'\Del\N)\Del_{\D}\N')`\Pi_{\C}]{1400}1a
\putmorphism(1000,-480)(0, -1)[\phantom{A\ot [(A\#H)^A\ot (A\#H)^A]}` \M\Del_{\C}(\M'\Del(\N\Del_{\D}\N'))`\simeq]{350}1r
\putmorphism(1000,-830)(0, -1)[\phantom{A\ot [(A\#H)^A\ot H]}`(\M\Del_{\C}\M')\Del(\N\Del_{\D}\N')`\simeq]{350}1r
\putmorphism(1000,-1180)(0, -1)[\phantom{[A\ot (A\#H)^A]\ot H]}`(\M\Del\N)\Del_{\C\Del\D}(\M'\Del\N') `F_3]{350}1r
\putmorphism(-500,-480)(1, -1)[\phantom{A\ot (A\#H)\ot (A\#H)}` `F_2]{1020}1l
\putmorphism(-1680,-1530)(1, 0)[\phantom{(A\#H)\ot (A\#H)}` \phantom{(\M\Del\N)\Del_{\C\Del\D}(\M'\Del\N') }`\Pi_{\C\Del\D}]{2680}1b
\efig
$$}
\vspace{1,5cm}

\noindent  The functor $F_3$ is given by: $F_3((M\Del_{\C} M')\Del(N\Del_{\D} N'))=(M\Del N)\Del_{\C\Del\D}(M'\Del N')$.
It is straightforward to see that $F$ and $F_3$ are inverse to each other.
\qed\end{proof}

Let $\C, \D$ be finite tensor categories. The category $\C\Del\D$ is a finite tensor category with unit object $I_{\C}\Del I_{\D}$ and componentwise tensor product:
$(X\Del Y)\odot(X'\Del Y')=(X\ot_{\C}X')\Del(Y\ot_{\D}Y')$, where $\ot_{\C}$ denotes the tensor product in $\C$ and similarly for $\D$.
Suppose now that $\C$ and $\D$ are braided with the braidings $\Phi^{\C}$ and $\Phi^{\D}$, respectively.
Then $\C\Del\D$ is braided with the braiding
$$\tilde\Phi_{X\Del Y, X'\Del Y'}: (X\Del Y)\odot(X'\Del Y')\to(X'\Del Y')\odot(X\Del Y)$$
between the objects $X\Del Y, X'\Del Y'\in\C\Del\D$ which is given by:
\begin{equation} \eqlabel{braid two}
\scalebox{0.88}{\bfig
 \putmorphism(-30,500)(1,0)[(X\Del Y)\odot(X'\Del Y')` (X'\Del Y')\odot(X\Del Y) `\tilde\Phi_{X\Del Y, X'\Del Y'}]{1630}1a
 \putmorphism(-50,0)(1,0)[(X\ot_{\C}X')\Del(Y\ot_{\D}Y')` (X'\ot_{\C}X)\Del(Y'\ot_{\D}Y)` \Phi_{X,X'}^{\C}\Del\Phi_{Y,Y'}^{\D}]{1700}1a
\putmorphism(-60,500)(0,-1)[\phantom{B\ot B}``=]{480}1l
\putmorphism(1630,500)(0,-1)[\phantom{B\ot B}``=]{480}1r
\efig}
\end{equation}

\begin{rem} \rmlabel{tensor product copies of C} \rlabel{tensor product copies of C}
Observe that the tensor product $\odot$ can be considered as $\Del_{\C\Del\D}$:
$$(X\Del Y)\odot(X'\Del Y')=(X\ot_{\C}X')\Del(Y\ot_{\D}Y')=(X\Del_{\C}X')\Del(Y\Del_{\D}Y')\iso (X\Del Y)\Del_{\C\Del\D}(X'\Del Y')$$
where the last isomorphism is due to \leref{Weq}.
\end{rem}

\section{Bimodule categories over braided and symmetric tensor categories} \selabel{braided}

Recall that for (non braided) finite tensor categories $\C, \D$ a $\C\x\D$-bimodule category is a left $\C\Del\D^{rev}$-module category and
a right $\C^{rev}\Del\D$-module category, where $\C^{rev}$ is the category with the tensor product reversed with respect to that of $\C$.
One would expect that when $\C$ and $\D$ are braided, that a $\C\x\D$-bimodule category is a one-sided $\C\Del\D$-bimodule category. Let us investigate this.

When $\C$ is braided with a braiding $\Phi$, the category $\C^{rev}$ is braided in two ways. Its braiding is given by
$X\ot^{rev}Y=Y\ot X\stackrel{\Phi^i}{\to}X\ot Y=Y\ot^{rev}X$, for $i=\pm 1$. We will denote the two braided monoidal categories by
$\C^i$ with $i=\pm 1$.

\begin{prop} \prlabel{big associators}
Let $\C$ and $\D$ be finite tensor categories and let $\M$ be a $\C\x\D$-bimodule category. Then we have:
\begin{enumerate} 
\item If $\D$ is braided, then $\M$ is a left $\C\Del\D$-module category with the action given by: $(X\Del Y)\crta\ot M=X\crta\ot M\crta\ot Y$, for
$X\in\C, Y\in\D, M\in\M$ and the left associator $m^L$ defined via:
\begin{equation} \eqlabel{m^L}
\scalebox{0.84}{
\bfig \hspace{-1cm}
\putmorphism(0,600)(1,0)[((X\Del Y)\odot(X'\Del Y')) \crta\ot M	` (X\Del Y)\crta\ot((X'\Del Y')\crta\ot M)` m^L]{1600}1a
\putmorphism(2150,600)(1,0)[`` =]{430}{-1}a
\putmorphism(60,600)(0,1)[`((X\ot X')\Del(Y\ot Y'))\crta\ot M`=]{300}1l
\putmorphism(60,300)(0,1)[``=]{300}1l
\putmorphism(0,0)(1,0)[((X\ot X')\crta\ot M)\crta\ot(Y\ot Y')` ((X\ot X')\crta\ot M)\crta\ot(Y'\ot Y)`\Id\crta\ot\tilde\Phi^i_{Y,Y'}]{1620}1b
\putmorphism(1620,0)(1,0)[\phantom{((X\ot X')\crta\ot M)\crta\ot(Y'\ot Y)}`(X\crta\ot(X'\crta\ot M))\crta\ot(Y'\ot Y) ` m^l\crta\ot\Id]{1480}1b
\putmorphism(3100,300)(0,1)[((X\crta\ot(X'\crta\ot M)))\crta\ot Y')\crta\ot Y``m^r]{300}{-1}r
\putmorphism(3100,600)(0,1)[(X\crta\ot((X'\crta\ot M)\crta\ot Y'))\crta\ot Y``a\crta\ot Y]{300}{-1}r
\efig}
\end{equation}
where $i=\pm 1$.
\item If $\C$ is braided, then $\M$ is a right $\C\Del\D$-module category with the action given by: $M\crta\ot(X\Del Y)=X\crta\ot M\crta\ot Y$ and the
right associator $m^R$:
\begin{equation} \eqlabel{m^R}
\scalebox{0.84}{
\bfig \hspace{-1cm}
\putmorphism(0,600)(1,0)[M\crta\ot((X\Del Y)\odot(X'\Del Y')) ` (M\crta\ot(X\Del Y))\crta\ot(X'\Del Y')` m^R]{1600}1a
\putmorphism(2150,600)(1,0)[`` =]{430}{-1}a
\putmorphism(60,600)(0,1)[`M\crta\ot((X\ot X')\Del(Y\ot Y'))`=]{300}1l
\putmorphism(60,300)(0,1)[``=]{300}1l
\putmorphism(0,0)(1,0)[((X\ot X')\crta\ot M)\crta\ot(Y\ot Y')` ((X\ot X')\crta\ot M)\crta\ot(Y'\ot Y)`\Phi^j_{X,X'}\crta\ot\Id]{1620}1b
\putmorphism(1620,0)(1,0)[\phantom{((X\ot X')\crta\ot M)\crta\ot(Y'\ot Y)}`(X'\crta\ot(X\crta\ot M))\crta\ot(Y\ot Y') ` m^l\crta\ot\Id]{1480}1b
\putmorphism(3100,300)(0,1)[((X'\crta\ot(X\crta\ot M)))\crta\ot Y)\crta\ot Y' ``m^r]{300}{-1}r
\putmorphism(3100,600)(0,1)[(X'\crta\ot((X\crta\ot M)\crta\ot Y))\crta\ot Y' ``a\crta\ot Y]{300}{-1}r
\efig}
\end{equation}
where $j=\pm 1$.
\item If both $\C$ and $\D$ are braided, then $\M$ is a $\C\Del\D$-bimodule category with the bimodule associativity constraint $A$ given by:
\begin{equation} \eqlabel{constr A}
\scalebox{0.84}{
\bfig \hspace{-1cm}
\putmorphism(0,600)(1,0)[((X\Del Y)\crta\ot M)\crta\ot(X'\Del Y')` (X\Del Y)\crta\ot (M\crta\ot(X'\Del Y'))` A]{1600}1a
\putmorphism(2150,600)(1,0)[`` =]{430}{-1}a
\putmorphism(60,600)(0,1)[` (X'\crta\ot(X\crta\ot(M\crta\ot Y)))\crta\ot Y'`=]{300}1l
\putmorphism(60,300)(0,1)[` ((X'\crta\ot X)\crta\ot(M\crta\ot Y))\crta\ot Y' ` (m^l)^{-1}\crta\ot Y']{360}1l
\putmorphism(60,-60)(0,1)[` ((X\crta\ot X')\crta\ot(M\crta\ot Y))\crta\ot Y' ` \Phi^s_{X',X}\crta\ot\Id]{360}1l
\putmorphism(60,-420)(0,1)[`(X'\crta\ot X)\crta\ot((M\crta\ot Y)\crta\ot Y') `a]{300}1l
\putmorphism(620,-720)(1,0)[` (X\ot X')\crta\ot (M\crta\ot(Y\ot Y')) ` \Id\crta\ot(m^r)^{-1}]{1020}1b
\putmorphism(2240,-720)(1,0)[` (X\ot X')\crta\ot (M\crta\ot(Y'\ot Y)) ` \Id\crta\ot\Phi^t_{Y,Y'}]{1020}1b
\putmorphism(3100,-420)(0,1)[((X\ot X')\crta\ot M)\crta\ot(Y'\ot Y)  ` ` a^{-1}]{300}{-1}r
\putmorphism(3100,-60)(0,1)[\phantom{((X\ot X')\crta\ot M)\crta\ot(Y'\ot Y)}` ` m^l\crta\ot\Id]{360}{-1}r
\putmorphism(3100,300)(0,1)[X\crta\ot((X'\crta\ot M)\crta\ot(Y'\ot Y)) ` (X\crta\ot(X'\crta\ot M))\crta\ot(Y'\ot Y)`a]{360}{-1}r
\putmorphism(3100,600)(0,1)[X\crta\ot(((X'\crta\ot M)\crta\ot Y')\crta\ot Y) ``X\crta\ot m^r]{300}{-1}r
\efig}
\end{equation}
where $s, t=\pm 1$.
\end{enumerate}
\end{prop}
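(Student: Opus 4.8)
The plan is to verify, for each of the three statements in \prref{big associators}, that the proposed action together with the proposed associator isomorphism satisfies the pentagon axiom (and the unit axiom, which will be immediate from the definitions since $m^l$, $m^r$, $a$ and the braidings all satisfy their own unit conditions). First I would set up notation: since $\M$ is a $\C\x\D$-bimodule category, I have at my disposal the left $\C$-module associator $m^l$, the right $\D$-module associator $m^r$, and the middle associativity constraint $a_{X,M,Y}\colon (X\crta\ot M)\crta\ot Y\to X\crta\ot(M\crta\ot Y)$, all of which satisfy their respective pentagon/hexagon coherences as part of the bimodule category structure. I would also record that the braiding $\tilde\Phi^i$ on $\C\Del\D$ (or $\D\Del\C$) decomposes componentwise via \equref{braid two}, so that any hexagon identity for $\tilde\Phi^i$ reduces to the corresponding hexagon for $\Phi^{\C}$ and $\Phi^{\D}$ separately — this is what lets the $i,j,s,t=\pm1$ choices be made independently.

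For part (1): the claim is that $(X\Del Y)\crta\ot M := X\crta\ot M\crta\ot Y$ defines a left $\C\Del\D$-module structure. Here $\D$ is braided, so I can use \equref{right associator} and \equref{mixed assoc} to turn the right $\D$-action into a one-sided structure and move $Y$'s past each other; the definition \equref{m^L} is exactly the composite one gets by first expanding $\odot$ componentwise, then using $m^l$ on the $\C$-side, $m^r$ on the $\D$-side, the braiding $\tilde\Phi^i$ to reorder the $Y$-factors, and $a$ to interleave. The key step is the pentagon: I would draw the large pentagon with vertices indexed by the five bracketings of $((X_1\Del Y_1)\odot(X_2\Del Y_2)\odot(X_3\Del Y_3))\crta\ot M$, substitute the definition \equref{m^L} at each edge, and then observe that the resulting diagram splits into three independent "layers": the $\C$-layer, where commutativity is precisely the left-module pentagon for $m^l$; the $\D$-layer, where after transporting through the braiding the commutativity is the right-module pentagon for $m^r$ combined with the braiding hexagon for $\Phi^{\D}$ (this is where the one-sided construction \equref{right associator} of \cite[Section 2.8]{DN} does the work); and the "mixing" layer, where commutativity follows from the bimodule coherence axioms relating $m^l$, $m^r$ and $a$ — essentially the pentagon-type axioms in the definition of a $\C\x\D$-bimodule category. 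Part (2) is entirely analogous with left/right interchanged and $\C$ braided instead of $\D$; by symmetry I would just say "mutatis mutandis."

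For part (3): now both $\C$ and $\D$ are braided, so from (1) I get a left $\C\Del\D$-action and from (2) a right $\C\Del\D$-action, and since $(X\Del Y)\crta\ot M = X\crta\ot M\crta\ot Y = M\crta\ot(X\Del Y)$ these two underlying actions literally coincide. The content is that the proposed $A$ in \equref{constr A} is a compatible bimodule associativity constraint, i.e. it satisfies the two mixed pentagon axioms (one involving two copies of the left action and one $A$, the other involving two copies of the right action and one $A$) and the appropriate unit condition. I would verify these by the same layering strategy: expand $A$ via \equref{constr A}, expand $m^L$ and $m^R$ via \equref{m^L}, \equref{m^R}, and check that each mixed pentagon decomposes into the $\C$-module pentagon for $m^l$, the $\D$-module pentagon for $m^r$, the hexagons for $\Phi^{\C}$ and $\Phi^{\D}$, and the bimodule coherence between $m^l,m^r,a$. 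The main obstacle I anticipate is purely bookkeeping: these diagrams are large (the single-layer pentagons already have five vertices each, and here they are "stacked" three deep with braiding re-orderings threaded through), so the real work is organizing the verification so that one can see the decomposition into independent layers rather than attempting a brute-force diagram chase. A clean way to manage this, which I would adopt, is to first prove a lemma isolating the braiding hexagon as the mechanism converting $m^r$ into the right-hand leg of \equref{right associator}/\equref{mixed assoc}, reducing everything else to the already-known coherences of a one-sided bimodule category over a single braided $\C$ (as in \cite[Section 2.8]{DN}) applied twice, once for $\C$ and once for $\D$, glued along $a$.
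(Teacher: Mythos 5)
Your proposal is correct and takes essentially the same route as the paper: a direct verification in which the pentagon/coherence axioms for $m^L$, $m^R$ and $A$ are expanded and seen to decompose into the coherences already supplied by the $\C\x\D$-bimodule structure of $\M$ (the pentagons for $m^l$ and $m^r$ and the middle constraint $a$) together with hexagon/naturality identities for the braidings of $\C$ and $\D$, which is also why the choices $i,j,s,t=\pm 1$ are all admissible.
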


\begin{proof}
The proof is direct. One uses the coherences satisfied by the associators $m^l, m^r$ and $a$ of the $\C\x\D$-bimodule structure of $\M$, and
apart from them, the computation reduces to certain conditions on the braidings. In the case of 1) it is: $(Z'\ot\tilde\Phi^i_{X',Y'})\tilde\Phi^i_{X'\ot Y',Z'}=
(\tilde\Phi^i_{Y',Z'}\ot X')\tilde\Phi^i_{X',Y'\ot Z'}$, which is fulfilled by the naturality of the braiding $\tilde\Phi^i$ in the first coordinate.
In the case of 3) one condition is:
$$(Y\ot\Phi^s_{Z,Y})(\Phi^s_{Z,X}\ot Y) \ot (\tilde\Phi^t_{Y',Z'}\ot X') (Y'\ot\tilde\Phi^t_{X',Z'}) \tilde\Phi^i_{X',Y'\ot Z'}=
\Phi^s_{Z,X\ot Y} \ot (Z'\ot\tilde\Phi^i_{X',Y'})\tilde\Phi^t_{X'\ot Y',Z'},$$
which is fulfilled by one of the two axioms for the braiding $\Phi$, on the one hand, and on the other, by the other axiom of the braiding $\tilde\Phi^s$
and the naturality of the braiding $\tilde\Phi^r$ in the first coordinate. The rest of the coherences to check are resolved in a similar fashion.
Note that the claims hold true for arbitrary choices of indices $i,j,s,t\in\{-1, 1\}$.
\qed\end{proof}

From the parts 1) and 2) in the above proposition we see that a $\C\x\D$-bimodule category is a left and a right $\C\Del\D$-module category
with: $(X\Del Y)\crta\ot M=X\crta\ot M\crta\ot Y=M\crta\ot (X\Del Y)$ for $X\in\C, Y\in\D, M\in\M$.
In order to investigate under which conditions a $\C\x\D$-bimodule category is a one-sided $\C\Del\D$-bimodule category, we will consider constraints
$m^L, m^R$ and $A$ from \prref{big associators} starting with their corresponding indices $i, j,s,t\in\{-1, 1\}$ and check the conditions
\equref{right associator} and \equref{mixed assoc}. The braiding appearing in these two conditions will now refer to the braiding $\Psi$ of
$\C\Del\D$ where $\M$ is assumed to be a left $\C\Del\D$-module category. According to \equref{m^L}, $\D$ is considered as a braided category
$\D^i$, thus $\Psi_{X\Del Y, X'\Del Y'}=\Phi_{X,X'}\Del\tilde\Phi^i_{Y,Y'}$, see \equref{braid two}.

\begin{prop} \prlabel{ }
Let $\C$ be a braided finite tensor category, $\D$ a symmetric finite tensor category and $\M$ a $\C\x\D$-bimodule category. Then $\M$
is a one-sided $\C\Del\D$-bimodule category with constraints $m^L, m^R$ and $A$ defined in \prref{big associators}, taking $j=1$ and $s=1$
in the definitions of $m^R$ and $A$, respectively.
\end{prop}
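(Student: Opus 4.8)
The plan is to verify, for the specific choice $j=s=1$ (and with $\D$ symmetric, so $\tilde\Phi^i_{Y,Y'}$ is the symmetry and in particular $i$ is irrelevant, $\tilde\Phi^1 = \tilde\Phi^{-1}$), that the $\C\Del\D$-bimodule structure on $\M$ supplied by \prref{big associators} is \emph{one-sided}, i.e.\ that the right action is the one induced from the left action via the braiding $\Psi$ of $\C\Del\D$, in the sense of equations \equref{right associator} and \equref{mixed assoc}. Concretely, one must check two things: (a) that the right associator $m^R$ of \equref{m^R} coincides with the composite built from $\Psi$ and $m^L$ prescribed by \equref{right associator}; and (b) that the bimodule associativity constraint $A$ of \equref{constr A} coincides with the composite built from $(m^L)^{-1}$, $\Psi$ and $m^L$ prescribed by \equref{mixed assoc}.

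First I would record that, by \equref{braid two}, the braiding of $\C\Del\D$ is $\Psi_{X\Del Y,X'\Del Y'}=\Phi_{X,X'}\Del\tilde\Phi^i_{Y,Y'}$, and that since $\D$ is symmetric $\tilde\Phi^i_{Y,Y'}$ is its symmetry $c_{Y,Y'}$ with $c_{Y',Y}c_{Y,Y'}=\Id$. Then I would write out the right-hand side of \equref{right associator} with $X,Y$ replaced by $X\Del Y, X'\Del Y'$: it is the composite $m^l_{X'\Del Y',X\Del Y,M}\circ(\Psi_{X\Del Y,X'\Del Y'}\crta\ot M)$, which under the identifications $(X\Del Y)\crta\ot M = X\crta\ot M\crta\ot Y$ unwinds into $\Phi_{X,X'}$ acting on the $\C$-side, $c_{Y,Y'}$ acting on the $\D$-side, followed by the left $\C\x\D$-bimodule associators. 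I would then compare this term by term with the definition \equref{m^R} of $m^R$ (with $j=1$): the $\C$-side contributions agree on the nose because both use $\Phi^1_{X,X'}=\Phi_{X,X'}$, and the $\D$-side contributions agree because $\tilde\Phi^1_{Y,Y'}=c_{Y,Y'}$ in the symmetric case; the remaining reshuffling of $m^l,m^r,a$ is forced to match by the coherence axioms of the $\C\x\D$-bimodule structure (the pentagon-type and hexagon-type identities among $m^l,m^r,a$ recalled in the preliminaries). This establishes (a).

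For (b), I would similarly expand the right-hand side of \equref{mixed assoc}, namely $m^l\circ(\Psi\ot M)\circ(m^l)^{-1}$ with the three objects being $X\Del Y$, $M$, $X'\Del Y'$, and expand the definition \equref{constr A} of $A$ with $s=1$. Again the comparison splits into a $\C$-part, where one needs $\Phi^1_{X',X}$ to match the $\Phi_{X,X'}$-ingredient of $\Psi$ composed with $(m^l)^{-1}$ — this is exactly where the choice $s=1$ is used and where the one-sided-ness axiom \equref{mixed assoc} is the correct target — and a $\D$-part, where the discrepancy between $\Phi^t_{Y,Y'}$ and the inverse symmetry is absorbed because $c$ is its own inverse. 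The bookkeeping of the $a$, $m^l$, $m^r$ strings on each side is then matched using the bimodule pentagon for $a$ and the two mixed coherences relating $a$ with $m^l$ and $m^r$. I expect the main obstacle to be purely organizational rather than conceptual: writing the large commuting diagrams \equref{m^R}, \equref{constr A} and the target composites of \equref{right associator}, \equref{mixed assoc} in a common normal form so that the symmetry relation $c_{Y',Y}c_{Y,Y'}=\Id$ and the choices $j=s=1$ can be invoked cleanly, and checking that no sign/side mismatch forces $i$, $t$ to be constrained as well. I would handle this by first treating the $\C$-factor and the $\D$-factor separately (legitimate since all structure maps in \prref{big associators} are built factorwise plus the $\C\x\D$-bimodule associators), reducing (a) and (b) on the $\C$-side to the definitions of $m^R$, $A$ being literally those of a one-sided bimodule category over the braided category $\C$ when $j=s=1$, and on the $\D$-side to the trivial observation that for a symmetric category the distinction between the two induced one-sided structures collapses.
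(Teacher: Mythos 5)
Your proposal is correct and follows essentially the same route as the paper: both expand the composites defining \equref{right associator} and \equref{mixed assoc}, normalize using the $\C\x\D$-bimodule coherences for $m^l$, $m^r$, $a$, and reduce to factorwise braiding identities where $j=1$, $s=1$ handle the $\C$-component and the symmetry of $\D$ (two applications of $\tilde\Phi^i$ composing to the identity) handles the $\D$-component. The paper's proof is exactly this sketch, recording the two reduced braiding identities explicitly.
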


\begin{proof}
The diagram chasing argument yields that the conditions \equref{right associator} and \equref{mixed assoc}, in order for $\M$ to be
a one-sided $\C\Del\D$-bimodule category, reduce to the following:
$$((X'\ot X)\ot\tilde\Phi^i_{Y',Y})(\Phi_{X,X'}\ot\tilde\Phi^i_{Y,Y'})=\Phi^j_{X,X'}\ot(Y\ot Y')$$
and
$$((X\ot X')\ot\tilde\Phi^i_{Y,Y'})(\Phi_{X',X}\ot\tilde\Phi^i_{Y',Y})((X'\ot X)\ot\tilde\Phi^{-i}_{Y,Y'})=
((X\ot X')\ot\tilde\Phi^t_{Y,Y'})(\Phi^s_{X',X}\ot(Y\ot Y')).$$
The first one implies that $j=1$ and that $\tilde\Phi^i_{Y',Y}\tilde\Phi^i_{Y,Y'}=\Id_{Y\ot Y'}$ for every choice of $i$, hence $\D$ should be symmetric.
The second one reduces to: $\Phi_{X',X}\ot\tilde\Phi^i_{Y,Y'}=\Phi^s_{X',X}\ot\tilde\Phi^t_{Y,Y'}$. Then $s=1$ and the indeces $i$ and $t$ are irrelevant,
since $\D$ symmetric.
\qed\end{proof}

\begin{cor} \colabel{symmetric}
If $\C$ is a symmetric finite tensor category, then every $\C$-bimodule category $\M$ is a one-sided $\C\Del\C$-bimodule category.
\end{cor}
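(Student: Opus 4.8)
The plan is to derive \coref{symmetric} as the special case $\D=\C$ of the preceding \prref{ } (the proposition immediately above it, with the unnamed label). Since $\C$ is symmetric, it is in particular braided, so $\C$ qualifies as the ``$\C$'' of that proposition; and being symmetric, it also qualifies as the ``$\D$''. Thus taking both categories to be $\C$, and a $\C$-bimodule category $\M$ (which by definition is a $\C\x\C$-bimodule category in the notation of the Preliminaries, i.e. a left $\C\Del\C^{\rev}$-module and a right $\C^{\rev}\Del\C$-module category), the proposition yields directly that $\M$ is a one-sided $\C\Del\C$-bimodule category, with the constraints $m^L, m^R$ and $A$ from \prref{big associators}, choosing $j=1$ and $s=1$ as prescribed there.

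First I would explicitly record that a $\C$-bimodule category is, by the convention fixed in the Preliminaries, precisely a $\C\x\C$-bimodule category, so no translation of hypotheses is needed. Next I would invoke \prref{ } verbatim with the substitution $\D\rightsquigarrow\C$, noting that its hypotheses ``$\C$ braided'' and ``$\D$ symmetric'' are both satisfied because a symmetric category is braided. The conclusion is exactly the statement of the corollary. One should also observe, as the proof of \prref{ } shows, that the remaining indices $i$ and $t$ are irrelevant precisely because the $\D$-side (here again $\C$) is symmetric, so the one-sided $\C\Del\C$-bimodule structure is in fact canonical and independent of those choices.

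There is essentially no obstacle here: the corollary is a pure specialization, and all the real work — the diagram chase reducing conditions \equref{right associator} and \equref{mixed assoc} to the two identities on the braidings, and the verification that symmetry of the second factor forces $j=s=1$ and kills the dependence on $i,t$ — has already been carried out in \prref{ } (which in turn rests on \prref{big associators}). If anything needs care, it is only the bookkeeping that the symmetry of $\C$ is used \emph{twice}: once to guarantee $\C$ is braided (so that \prref{big associators} applies and gives the three associators), and once as the ``$\D$ symmetric'' hypothesis of \prref{ } (so that the compatibility with \equref{right associator}, \equref{mixed assoc} holds). I would close by remarking that this corollary is what makes the monoidal category $(\C^{br}\x\dul\Mod,\Del_\C,\C)$ relevant for a symmetric $\C$, feeding into \prref{sim bimod}.

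\begin{proof}
By the convention adopted in the Preliminaries, a $\C$-bimodule category is the same as a $\C\x\C$-bimodule category. Since $\C$ is symmetric, it is in particular braided, so the hypotheses of \prref{big associators} are met and $\M$ carries the left and right $\C\Del\C$-module structures with associators $m^L, m^R, A$ described there. Applying \prref{ } with $\D=\C$ (which is legitimate, as $\C$ is symmetric), we conclude that $\M$ is a one-sided $\C\Del\C$-bimodule category, with $m^L, m^R, A$ given by the formulas of \prref{big associators} for $j=1$ and $s=1$. As noted in the proof of \prref{ }, the indices $i$ and $t$ play no role here because the second tensor factor is symmetric, so this one-sided bimodule structure does not depend on those choices.
\qed\end{proof}
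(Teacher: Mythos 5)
Your proof is correct and is exactly the paper's intended argument: \coref{symmetric} is stated without a separate proof precisely because it is the specialization $\D=\C$ of the preceding proposition, using that a symmetric category is in particular braided. Your additional bookkeeping about the roles of $j,s$ versus $i,t$ is accurate and consistent with the proof of that proposition.
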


\bigskip

$\C$-(bi)module categories, $\C$-(bi)module functors and $\C$-(bi)module natural transformations form 2-categories. Truncating them (forgetting 
2-cells and identifying isomorphic 1-cells) we obtain categories. So, their objects are $\C$-(bi)module categories and morphisms are isomorphism 
classes of $\C$-(bi)module functors. We will denote these categories by $\C\x\Bimod, \C\x\Mod$ and $\Mod\x\C$, depending on the side of the $\C$-module structures. 

The converse of \prref{big associators}, parts 1) and 2) also holds and for a symmetric finite tensor category $\C$ we have:
\begin{equation} \eqlabel{summ one sided bimod}
\M\in\C\x\Bimod\quad \Leftrightarrow\quad \M\in\C\Del\C\x\Mod \quad\Leftrightarrow\quad \M\in\Mod\x\C\Del\C
\end{equation}
and moreover $\M$ is a one-sided $\C\Del\C$-bimodule category.

\bigskip

The construction of one-sided $\C$-bimodule categories, for a braided finite tensor category $\C$, induces in fact an embedding of categories:
$$\C\x\Mod\hookrightarrow\C\x\Bimod$$
(see \cite[Section 2.8]{DN} for the 2-category case). The module structures on the functors we will see in details in \ssref{left duals} and \ssref{right duals}.
The obtained subcategory (of one-sided $\C$-bimodule categories) we will denote by $\C^{br}\x\Mod$.

In \cite{Gr} it was proved that the 2-category of $\C$-bimodule categories is monoidal. In \cite[Proposition 4.4, Proposition 4.9]{Gr} it was proved that the associativity 
1-isomorphism $a$ strictly satisfies the pentagonal coherence axiom, as in a monoidal category, and in \cite[Proposition 3.15, Proposition 4.11]{Gr} it was proved that the coherence 
of the unity constraints with $a$ holds up to isomorphism. This means that in the category $\C^{br}\x\Mod$ the latter coherence holds strictly. Consequently, $\C^{br}\x\Mod$ 
is indeed a monoidal category with tensor product $\Del_{\C}$ and unit $\C$.

The next proof we will not do with all the rigor, the full proof we will possibly present in another paper.

\begin{prop} \prlabel{sim bimod}
For a symmetric finite tensor category $\C$ the category $(\C^{br}\x\Mod, \Del_{\C}, \C)$ is symmetric monoidal.
\end{prop}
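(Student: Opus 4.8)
The plan is to establish that the monoidal category $(\C^{br}\x\dul\Mod, \Del_{\C}, \C)$ carries a symmetry, i.e. a natural family of bimodule equivalences $\sigma_{\M,\N}\colon \M\Del_{\C}\N \to \N\Del_{\C}\M$ satisfying the hexagon axioms and $\sigma_{\N,\M}\circ\sigma_{\M,\N}\iso\Id$. The key observation is that for a symmetric $\C$, \coref{symmetric} (together with \equref{summ one sided bimod}) identifies one-sided $\C$-bimodule categories with left $\C\Del\C$-module categories, and the braiding $\Psi$ on $\C\Del\C$ coming from \equref{braid two} is symmetric because $\C$ is. So the first step is to reduce the whole question to the following general fact: if $\K$ is a braided (here even symmetric) finite tensor category, then the monoidal category of left $\K$-module categories under $\Del_{\K}$ is braided (symmetric), with braiding built from that of $\K$. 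Then I specialize $\K=\C\Del\C$, but keep track of the fact that we actually want $\Del_{\C}$, not $\Del_{\C\Del\C}$; the two tensor products of bimodule categories agree under the identification of \coref{symmetric} essentially because $\Del_{\C}$ balances over the diagonal copy of $\C$ inside $\C\Del\C$ — this compatibility is what \leref{Weq} and \rmref{tensor product copies of C} are set up to provide, and I would invoke them here.

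Concretely, to construct $\sigma_{\M,\N}$: given a right $\C$-module structure on $\M$ and a left $\C$-module structure on $\N$, the object $\M\Del_{\C}\N$ is the universal recipient of a $\C$-balanced functor out of $\M\Del\N$. I define the functor $\M\Del\N \to \N\Del_{\C}\M$ on objects by $M\Del N \mapsto N\Del_{\C} M$, using the one-sided structures (so that the left action of $\C$ on $M$ viewed in $\N\Del_{\C}\M$ is the braided-induced one, and symmetrically); the balancing isomorphism is supplied by the braiding $\Phi$ of $\C$ together with \equref{right associator} and \equref{mixed assoc}, and the $\C$-balanced coherence \equref{C-balanced} of this functor is exactly one of the braid/hexagon identities for $\Phi$ — the same computation that appears in the proof of \prref{big associators} reappears here. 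By the universal property this descends to $\sigma_{\M,\N}\colon \M\Del_{\C}\N\to\N\Del_{\C}\M$; one then checks it is a bimodule functor (using that both sides are one-sided bimodule categories, so only the left structure has to be matched) and an equivalence with quasi-inverse $\sigma_{\N,\M}$, the latter because $\Phi_{X,Y}\Phi_{Y,X}=\Id$ by symmetry of $\C$. Naturality of $\sigma$ in $\M$ and $\N$ follows from naturality of $\Phi$.

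The remaining steps are the two hexagon axioms for $\sigma$ with respect to the associativity equivalences $(\M\Del_{\C}\N)\Del_{\C}\Pp\simeq\M\Del_{\C}(\N\Del_{\C}\Pp)$. Each hexagon, evaluated on a generating object $M\Del_{\C} N\Del_{\C} P$, unwinds — via the definition of $\sigma$ on representatives and the explicit form of the associators $m^l, m^r, a$ on one-sided bimodule categories given in \equref{right associator}–\equref{mixed assoc} — to the two hexagon identities satisfied by the braiding $\Phi$ of $\C$ itself, plus coherence of the $\C$-balancing $\beta$ recorded in \equref{id C-balanced}. Finally, the symmetry condition $\sigma_{\N,\M}\sigma_{\M,\N}=\Id$ reduces on representatives to $\Phi_{N\text{-side},M\text{-side}}\Phi_{M\text{-side},N\text{-side}}=\Id$, which holds since $\C$ is symmetric. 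I expect the main obstacle to be purely bookkeeping rather than conceptual: verifying that the candidate $\sigma$ is genuinely a morphism of $\C$-balanced functors (hence descends to $\Del_{\C}$) and then that it is a \emph{bimodule} functor, because one must carefully use the identification \equref{summ one sided bimod} of left, right and bi-module structures and check that the left $\C\Del\C$-module equivalence one gets really respects the induced right structure — i.e. that no asymmetry is introduced by the choices of signs $i,j,s,t$ in \prref{big associators}. Since by that proposition the one-sided bimodule structure exists for \emph{any} such choice when $\C$ is symmetric, these signs are irrelevant and the verification goes through; this is the technical heart of the argument, and it is where I would spend the most care.
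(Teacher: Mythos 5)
Your concrete construction is exactly the paper's proof: the paper defines the $\C$-balanced functor $F(M\Del N)=N\Del_{\C}M$ with identity balancing isomorphism, verifies the balancing coherence by a computation using ${m_r}_{M,X,Y}={m_l}_{Y,X,M}(\Phi_{X,Y}\ot M)$ from \equref{right associator}, the identity \equref{id C-balanced} and the symmetry of $\Phi$ (to cancel the composed braiding factor), descends it to the flip $\tau\colon\M\Del_{\C}\N\to\N\Del_{\C}\M$ of \equref{flip tau}, and checks $\C$-bilinearity by reducing the coherences to \equref{right associator} and \equref{mixed assoc} --- precisely the steps in your second and third paragraphs (the paper is in fact terser than you are: it leaves the hexagons and $\tau_{\N,\M}\tau_{\M,\N}\iso\Id$ implicit, since $\tau$ is literally the flip on objects). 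The one thing you should drop is the opening ``reduction'' to left $\C\Del\C$-module categories under $\Del_{\C\Del\C}$: for one-sided $\C$-bimodule categories the product $\Del_{\C}$ balances only the right action on $\M$ against the left action on $\N$, whereas $\Del_{\C\Del\C}$ would balance over both actions simultaneously, and these do not agree --- e.g.\ $\C\Del_{\C}\C\simeq\C$ while $\C\Del_{\C\Del\C}\C$ is in general a quite different category --- nor do \leref{Weq} or \rmref{tensor product copies of C} supply such an identification (they concern external products of the form $(\M\Del\N)\Del_{\C\Del\D}(\M'\Del\N')$). Since your ``concrete'' paragraph works directly with the $\C$-balancing and never actually uses that reduction, this is a removable detour rather than a fatal gap, but as stated the plan's first step would fail.
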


\begin{proof}
Let $\M$ and $\N$ be two one-sided $\C$-bimodule categories. The braiding of $\C$ enables to define a $\C$-balanced functor $F: \M\Del\N\to\N\Del_{\C}\M,
F(M\Del N)=N\Del_{\C}M$. For $M\in\M, N\in\N$ and $X\in\C$ we have: $F(M\crta\ot X, N)=N\Del_{\C}(M\crta\ot X)=N\Del_{\C}(X\crta\ot M)\stackrel{\beta^{-1}_X}{\to}
(N\crta\ot X)\Del_{\C}M= (X\crta\ot N)\Del_{\C}M=F(M, X\crta\ot N)$,  where $\beta_X$ is from \equref{id C-balanced}. For the sake of this non-rigorous 
proof we will consider $b_{M,X,N}: F(M\crta\ot X, N)\to F(M, X\crta\ot N)$ to be identity.
Then the coherence condition for $F$ to be $\C$-balanced, 
$F(M, {m_l}_{X,Y,N}^{-1}) b_{M,X,Y \crta\ot N} b_{M\crta\ot X,Y,N} F({m_r}_{M,X,Y}, N) = b_{M,X \ot Y,N}$ for $M\in\M, N\in\N, X,Y\in\C$
reduces to: $({m_l}_{X,Y,N}^{-1}\Del_{\C}M)(N\Del_{\C}{m_r}_{M,X,Y})=N\Del_{\C}(M\ot (X\ot Y))$.
To check this identity we compose it with the isomorphism $((\Phi_{Y,X}^{-1}\crta\ot N)\Del_{\C}M)$. Recall that the right associator $m_r$ by
\equref{right associator} is given via ${m_r}_{M,X,Y}= {m_l}_{Y,X,M}(\Phi_{X,Y}\ot M)$. 
Now we compute:
$$\begin{array}{rl}
((\Phi_{Y,X}^{-1}\crta\ot N)\hskip-1em&\hspace{-0,2cm}\Del_{\C}M)({m_l}_{X,Y,N}^{-1}\Del_{\C}M)(N\Del_{\C}{m_l}_{Y,X,M}(\Phi_{X,Y}\ot M))\\
&= ({m_r}_{N,Y,X}^{-1}\Del_{\C}M)({m_r}_{N,Y,X}(N\crta\ot\Phi_{X,Y})\Del_{\C}M)\\
&= (\Phi_{X,Y} \crta\ot N)\Del_{\C}M.
\end{array}$$
In the first equation we applied the identity of the right associator $m_r$ for $\N$ and \equref{id C-balanced}.
In the second one we used that $\N$ is a one-sided $\C$-bimodule. 
Since $\C$ is symmetric, we may cancel out the composed factor to recover $({m_l}_{X,Y,N}^{-1}\Del_{\C}\M)(\N\Del_{\C}{m_r}_{M,X,Y})=\Id_{\M\Del_{\C}\N}$.
Thus $F$ is $\C$-balanced and it
induces a unique functor
\begin{equation} \eqlabel{flip tau}
\tau: \M\Del_{\C}\N\to\N\Del_{\C}\M, \quad M\Del_{\C} N\mapsto N\Del_{\C} M
\end{equation} 
which clearly is an isomorphism functor. It remains to see if $\tau$ is $\C$-bilinear. The isomorphism $\tau(X\crta\ot M\Del_{\C}N)\to X\crta\ot \tau(M\Del_{\C}N)$
understood as the identification $ N\Del_{\C}(X\crta\ot M)\stackrel{\beta^{-1}_X}{\to}(N\crta\ot X)\Del_{\C} M= (X\crta\ot N)\Del_{\C}M$ we will consider 
identity, similarly as we did before. 
The coherence for left $\C$-linearity of $\tau$ then reduces to $N\Del_{\C}{m_l}_{X,Y,M}={m_l}_{X,Y,N}\Del_{\C}M$.
The diagram chasing argument shows that this identity follows by the identity \equref{id C-balanced} and the rules for the right and the two-sided
associator: \equref{right associator} and \equref{mixed assoc}.
The coherence for the right $\C$-linearity similarly reduces to: ${m_r}_{N,X,Y}\Del_{\C}M=N\Del_{\C}{m_r}_{M,X,Y}$. By the relation between left and right associator
functors, this comes down to: $({m_l}_{Y,X,N}\Del_{\C}M)((\Phi_{X,Y}\crta\ot N)\Del_{\C}M) = (N\Del_{\C}{m_l}_{Y,X,M})(N\Del_{\C}(\Phi_{X,Y}\crta\ot M))$,
which is similarly proved as the coherence for the left side.
\qed\end{proof}

\section{Dual objects for bimodule categories} 

In this section we compute the left and the right dual objects for the objects of the monoidal category $(\C\x\Bimod, \Del_{\C}, \C)$, where $\C$ is a finite
tensor category, proving thus that the former is a closed monoidal category. In \ssref{Picard category} we conclude that if $\C$ is symmetric, then the left and the
right dual objects in $(\C^{br}\x\Mod, \Del_{\C}, \C)$ coincide. Actually, this is well-known in braided monoidal categories, and we saw in \prref{sim bimod}
that in order for $(\C\x\Bimod, \Del_{\C}, \C)$ (or some subcategory) to be braided, it it necessary: that $\C$ be braided, that we restrict to one-sided $\C$-bimodule
categories, and finally also that $\C$ be even symmetric.

\subsection{Inner hom objects for module categories}

There are following four pairs of adjoint functors $\C\to\C$:
\begin{equation} \eqlabel{adj}
(X\ot-, {}^*X\ot -), \quad (X^*\ot-, X\ot -), \quad (-\ot {}^*X, -\ot X), \quad (-\ot X, -\ot X^*).
\end{equation}

For a left $\C$-module category $\M$ let $\u\Hom_{\M}(M_1, M_2)$ be the inner hom object for $M_1, M_2\in\M$. It is an object in $\C$ such that
$$\Hom_{\M}(X\crta\ot M_1, M_2)\iso \Hom_{\C}(X, \u\Hom_{\M}(M_1, M_2))$$
for $X\in\C$. Then $\u\Hom_{\M}(-, M):\M\to\C^{op}$ and $\u\Hom_{\M}(M, -):\M\to\C$ are left $\C$-linear, thus they are functors in $\Fun_{\C}(\M, \C)$,
\cite[Corollary 2.10.15]{EGNO}. ($\M^{op}$ is the module category defined via relations \equref{left op} and \equref{right op}.)

\medskip

Similarly, for a right $\C$-module category $\M$ let $\o\Hom_{\M}(M_1, M_2)$ denote the corresponding inner hom object. It is such
an object in $\C$ that:
\begin{equation} \eqlabel{right inner hom}
\Hom_{\M}(M_1\crta\ot X, M_2)\iso \Hom_{\C}(X, \o\Hom_{\M}(M_1, M_2)).
\end{equation}
Then we have:
$$\begin{array}{rl}
\Hom_{\C}(Y, \hskip-1em& \hspace{-0,2cm} \o\Hom_{\M}(M_1\crta\ot X, M_2))=\Hom_{\M}((M_1\crta\ot X)\crta\ot Y , M_2)=\Hom_{\M}(M_1\crta\ot (X\ot Y ), M_2)\\
& \Hom_{\C}(X\ot Y, \o\Hom_{\M}(M_1, M_2)) \stackrel{\equref{adj}}{=}\Hom_{\C}(Y, {}^*X\ot \o\Hom_{\M}(M_1, M_2)).
\end{array}$$
This yields:
\begin{equation} \eqlabel{(3 derecha)}
\o\Hom_{\M}(M_1\crta\ot X, M_2)={}^*X\ot\o\Hom_{\M}(M_1, M_2).
\end{equation}
Similarly, we find:
$$\begin{array}{rl}
\Hom_{\C}(Y, \hskip-1em& \hspace{-0,2cm} \o\Hom_{\M}(M_1, M_2\crta\ot X))=\Hom_{\M}(M_1\crta\ot Y, M_2\crta\ot X)\stackrel{\equref{adj}}{=}\Hom_{\M}(M_1\crta\ot Y\crta\ot {}^*X, M_2)\\
& \Hom_{\C}(Y\ot {}^*X, \o\Hom_{\M}(M_1, M_2)) \stackrel{\equref{adj}}{=}\Hom_{\C}(Y, \o\Hom_{\M}(M_1, M_2)\ot X)
\end{array}$$
which yields:
\begin{equation} \eqlabel{(4) derecha}
\o\Hom_{\M}(M_1, M_2\crta\ot X)=\o\Hom_{\M}(M_1, M_2)\ot X.
\end{equation}
Then \equref{(3 derecha)} implies that $\o\Hom_{\M}(-, M):\M\to{}^{op}\C$ is right $\C$-linear and \equref{(4) derecha} implies that $\o\Hom_{\M}(M, -):\M\to\C$
is right $\C$-linear. Thus both can be seen as functors in $\Fun(\M, \C)_{\C}$. We explain below what ${}^{op}\M$ means.

\medskip

A right $\ca$-module category $\M$ gives rise to a left $\C$-module category ${}^{op}\mo$ with the action
given by \equref{left op my} and associativity isomorphisms $m^{op}_{X,Y,M}= m_{M,Y^*, X^*}$ for all $X, Y\in \ca, M\in \mo$.
Similarly, a left $\ca$-module category $\mo$ gives rise to a right $\ca$-module category ${}^{op}\mo$ with the action given via \equref{right op my}.
If $\mo$ is a $(\ca,\Do)$-bimodule category then ${}^{op}\mo$ is a
$(\Do,\ca)$-bimodule category and ${}^{op}({}^{op}\mo)\iso\M$ as $(\ca,\Do)$-bimodule categories. \vspace{-1cm}
\begin{center}
\begin{tabular}{p{4.8cm}p{2cm}p{5.8cm}}
\begin{eqnarray}  \eqlabel{left op my}
X{}^{op}\crta\ot M=M\otb X^*
\end{eqnarray}  & &
\begin{eqnarray} \eqlabel{right op my}
M{}^{op}\crta\ot X={}^*X\crta\ot M
\end{eqnarray}
\end{tabular}
\end{center} \vspace{-0,7cm}

\medskip

If $\M$ is a $(\D,\C)$-bimodule category, we find the following behaviour of the right $\C$-linear inner hom object:
$$\begin{array}{rl}
\Hom_{\C}(Y, \hskip-1em& \hspace{-0,2cm} \o\Hom_{\M}(D\crta\ot -, M))=\Hom_{\M}((D\crta\ot -)\crta\ot Y, M)=\Hom_{\M}(D\crta\ot (-\crta\ot Y), M)\\
&\stackrel{\equref{adj}}{=}\Hom_{\M}(-\crta\ot Y,  {}^*D\crta\ot M)= \Hom_{\C}(Y, \o\Hom_{\M}(-, {}^*D\crta\ot M)) .
\end{array}$$
Consequently:
\begin{equation} \eqlabel{mixed inner}
\o\Hom_{\M}(D\crta\ot -, M)=\o\Hom_{\M}(-, {}^*D\crta\ot M)
\end{equation}
for every $D\in\D$.

\subsection{Left dual object for a bimodule category} \sslabel{left duals}

For a $\C\x\D$-bimodule category $\M$ and a $\C\x\E$-bimodule category $\N$ the category
$\Fun_{\C}(\M,\N)$ is a $\D\x\E$-bimodule category via
\begin{equation} \eqlabel{left-left}
(X\crta\ot F)=F(-\crta\ot X)
\end{equation} and
\begin{equation} \eqlabel{right-left}
(F\crta\ot Y)=F(-)\crta\ot Y
\end{equation}
for $X\in\D, Y\in\E, F\in\Fun_{\C}(\M,\N)$ and $M\in\M$.

Let $\M$ be a $\C\x\D$-bimodule category and $\N$ a $\C\x\E$-bimodule category. We denote by
\begin{equation}\eqlabel{teta}
\theta_{\M,\N}: \M^{op}\Del_{\C}\N\to\Fun_{\C}(\M,\N)
\end{equation}
the $\D\x\E$-bimodule equivalence from
\cite[Proposition 3.5]{ENO} given by $M\Del_{\C}N\mapsto \u\Hom_{\M}(-, M)\crta\ot N$. 

\medskip

Denote by $R:\C\to\Fun_{\C}(\M,\M)$ the functor given by $R(X)=-\crta\ot X$, where $\M$ is a $\C$-bimodule category. It is $\C$-bilinear:
$R(C\ot X)=-\crta\ot(C\ot X)\iso(-\crta\ot C)\crta\ot X=R(X)(-\crta\ot C)\stackrel{\equref{left-left}}{=}C\crta\ot R(X)$ and
$R(X\ot C)\iso(-\crta\ot X)\crta\ot C=R(X)\crta\ot C$ for $X,C\in\C$. The coherence for the left and right $\C$-linearity of $R$ are precisely the coherence
for the right $\C$-action on $\M$.

We now define $\coev: \C\to\M^{op}\Del_{\C} \M$ such that the triangle $\langle 1\rangle$ in the following diagram commutes:
\begin{equation} \eqlabel{coev}
\scalebox{0.88}{\bfig
 \putmorphism(-50,500)(1,0)[\C`\M^{op}\Del_{\C} \M`\coev]{1170}1a
 \putmorphism(-50,0)(1,0)[\Fun_{\C}(\M,\M)`\Fun_{\C}(\M,\C)\Del_{\C}\M`\db]{1250}{-1}a
\putmorphism(-40,500)(0,-1)[\phantom{B\ot B}``R]{480}1l
\putmorphism(950,500)(0,-1)[\phantom{B\ot B}``\theta_{\M, \C}\Del_{\C}\M]{480}1r
\putmorphism(350,140)(2,1)[``\theta_{\M, \M}]{340}{-1}l
\put(40,250){\fbox{1}}
\put(700,160){\fbox{2}}
\efig}
\end{equation}
The functor $\db: \Fun_{\C}(\M,\C)\Del_{\C}\M\to\Fun_{\C}(\M,\M)$ is defined so that $\db(F\Del_{\C}M)=F(-)\crta\ot M$, for $F\in\Fun_{\C}(\M,\M), M\in\M$.
Then the triangle $\langle 2\rangle$ above commutes. It is clear that $\db$ is a $\C$-bimodule equivalence functor, and $\coev$ is a $\C$-bimodule functor.
From the definition of $\coev$ we see that $\coev(I)=\oplus_{i\in J} W_i\Del_{\C}V_i$ is such an object in $\M^{op}\Del_{\C} \M$
that $\Id_{\M}= \oplus_{i\in J} \u\Hom_{\M}(-, W_i)\crta\ot V_i$. That is, for every $M\in\M$ one has:
\begin{equation} \eqlabel{coev eq}
\oplus_{i\in J} \u\Hom_{\M}(M, W_i)\crta\ot V_i=M.
\end{equation}

\bigskip

In \cite[Corollary 3.22]{Gr} it is proved that a $\C\x\D$-bimodule category $\M$ induces an adjoint pair of functors
$\M\Del_{\D}-: \D\x\E\x\Bimod \to \C\x\E\x\Bimod: \Fun_{\C}(\M, -)$.
The unit of the adjunction evaluated at a $\D\x\E$-bimodule category $\N$
is a $\D\x\E$-bilinear functor $\alpha(\N): \N\to\Fun_{\C}(\M, \M\Del_{\D}\N)$ given by $N\mapsto -\Del_{\D}N$. If $\D=\E=\C=\N$, we get a
$\C$-bilinear functor $\alpha: \C\to\Fun_{\C}(\M, \M\Del_{\C}\C)=\Fun_{\C}(\M, \M)$ such that $\alpha(X)=-\Del_{\C}X$. The counit of the adjunction
in this case is a $\C$-bilinear functor $\beta: \M\Del_{\C}\Fun_{\C}(\M, \C)\to\C$ which is the evaluation functor.

\begin{rem} \rmlabel{invertible-equiv}
When $\M$ is an invertible $\C$-bimodule category we have that the unit of the adjunction $R: \C\to\Fun_{\C}(\M, \M)$, and
hence also the counit $\Ev:  \M\Del_{\C}\Fun_{\C}(\M, \C)\to\C$, is an equivalence \cite[Proposition 4.2]{ENO}. In this case the functor
$\coev$ from \equref{coev} is an equivalence, too.
\end{rem}

Finally, define the functor $\ev: \M\Del_{\C}\M^{op}\to\C$ through the commuting diagram:
\begin{eqnarray} \eqlabel{functor ev}
\scalebox{0.88}{\bfig
 \putmorphism(-180,500)(1,0)[\M\Del_{\C} \M^{op}`\C`\ev]{1100}1a
 \putmorphism(0,0)(1,1)[\M\Del_{\C}\Fun_{\C}(\M,\C)``]{200}0a
\putmorphism(550,110)(1,1)[``\Ev]{240}1r
\putmorphism(-40,500)(0,-1)[\phantom{B\ot B}``\M\Del_{\C}\theta_{\M, \C}]{480}1l
\efig}
\end{eqnarray}
Then $\ev$ is $\C$-bilinear and it is $\ev(M\Del_{\C}N)=\u\Hom_{\M}(M,N)$. 

\begin{prop} \prlabel{left dual}
Let $\M$ be a $\C$-bimodule category. The object $\M^{op}$ together with the functors
$$\ev: \M\Del_{\C}\M^{op}\to\C\quad\textnormal{and}\quad\coev: \C\to \M^{op}\Del_{\C}\M$$
is a left dual object for $\M$ in the monoidal category $(\C\x\Bimod, \C, \Del_{\C})$.

Consequently, $(\ev\Del_{\C} \M)(\M\Del_{\C} \coev)\iso\Id_{\M}$ and $(\M^{op}\Del_{\C} \ev)(\coev\Del_{\C} \M^{op})\iso\Id_{\M^{op}}$.

If $\M$ is an invertible $\C$-bimodule category, the functors $\ev$ and $\coev$ are $\C$-bimodule equivalence functors.
\end{prop}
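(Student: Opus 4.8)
The plan is to verify the two triangle (zig-zag) identities that characterize a left dual object in a monoidal category, namely
$$(\ev\Del_{\C} \M)(\M\Del_{\C} \coev)\iso\Id_{\M}\quad\text{and}\quad(\M^{op}\Del_{\C} \ev)(\coev\Del_{\C} \M^{op})\iso\Id_{\M^{op}},$$
and then to promote these to an actual duality in $(\C\x\Bimod, \Del_{\C}, \C)$ by checking that $\ev$ and $\coev$ are morphisms in that monoidal category, i.e. $\C$-bimodule functors (which was already observed along the way: $\coev$ is a $\C$-bimodule functor by construction in \equref{coev}, and $\ev$ is $\C$-bilinear by the remark following \equref{functor ev}). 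First I would unwind the composite $(\ev\Del_{\C} \M)(\M\Del_{\C} \coev)$ applied to an arbitrary $M\in\M$. Using $\ev(M'\Del_{\C}N)=\u\Hom_{\M}(M',N)$ and $\coev(I)=\oplus_{i\in J} W_i\Del_{\C}V_i$, the composite sends $M\mapsto M\Del_{\C}\coev(I)=\oplus_i M\Del_{\C}(W_i\Del_{\C}V_i)$, which after the canonical associativity equivalence $(\M\Del_{\C}\M^{op})\Del_{\C}\M\simeq\M\Del_{\C}(\M^{op}\Del_{\C}\M)$ becomes $\oplus_i \ev(M\Del_{\C}W_i)\crta\ot V_i=\oplus_i \u\Hom_{\M}(M,W_i)\crta\ot V_i$, and this equals $M$ by the defining property \equref{coev eq} of $\coev$. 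So the first zig-zag is exactly \equref{coev eq}.

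For the second zig-zag, $(\M^{op}\Del_{\C} \ev)(\coev\Del_{\C} \M^{op})$ applied to $M\in\M$ (viewed in $\M^{op}$) sends $M\mapsto \coev(I)\Del_{\C}M=\oplus_i (W_i\Del_{\C}V_i)\Del_{\C}M\simeq\oplus_i W_i\Del_{\C}(V_i\Del_{\C}M)$; here the inner $\Del_{\C}$ is the tensor product over $\C$ of $\M^{op}$ with $\M$, and applying $\ev$ in the second slot — but now $\ev$ as a functor $\M^{op}\Del_{\C}(\M^{op})^{op}\to\C$, equivalently via the identification $(\M^{op})^{op}\iso\M$, it reads $\ev$ on $\M\Del_{\C}\M^{op}$ reversed — gives $\oplus_i W_i\crta\ot\ev(V_i\Del_{\C}M)=\oplus_i W_i\crta\ot\u\Hom_{\M}(V_i,M)$. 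One then has to recognize this as $M$ again. The clean way is to use the symmetry between $\ev$ and $\coev$: the same formula \equref{coev eq} applied in $\M^{op}$ (whose own coevaluation is $\coev$ for $\M$ read backwards, by $(\M^{op})^{op}\iso\M$ and \cite[Proposition 3.5]{ENO}) yields the identity. Concretely I would show that the object $\oplus_i V_i\Del_{\C}W_i\in\M\Del_{\C}\M^{op}$, obtained from $\coev$ by the flip, represents $\ev$ in the sense dual to \equref{coev eq}, and then the second triangle becomes the $\M^{op}$-instance of the first. Alternatively one can argue abstractly: a left dual is unique up to unique isomorphism, and \equref{coev eq} already forces $\M^{op}$ to be a left dual with the given $\ev,\coev$; the second zig-zag then holds automatically. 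I would present the concrete computation but fall back on this uniqueness remark if the bookkeeping of associativity coherences becomes heavy.

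The main obstacle is keeping the three different $\Del_{\C}$'s and the several ``op'' decorations straight: $\M^{op}$ is defined via \equref{left op}, \equref{right op}, its inner hom is the left inner hom $\u\Hom$, and one must consistently invoke the canonical bimodule associativity equivalence $(\M\Del_{\C}\N)\Del_{\E}\Pp\simeq\M\Del_{\C}(\N\Del_{\E}\Pp)$ from \cite[Remark 3.6]{ENO}, together with the fact that $\theta_{\M,\C}$, $\db$, and $\Ev$ are all $\C$-bimodule equivalences, to transport the identities \equref{coev eq} and \equref{functor ev} through the diagrams \equref{coev} and \equref{functor ev}. Once the two zig-zags are in place, the statement ``$\M^{op}$ is a left dual object'' follows by definition, and the displayed consequence $(\ev\Del_{\C} \M)(\M\Del_{\C} \coev)\iso\Id_{\M}$, $(\M^{op}\Del_{\C} \ev)(\coev\Del_{\C} \M^{op})\iso\Id_{\M^{op}}$ is literally the pair of zig-zags. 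Finally, for the last sentence: if $\M$ is invertible, $\Ev$ is an equivalence by \cite[Proposition 4.2]{ENO} (quoted in \rmref{invertible-equiv}), hence $\ev=\Ev\comp(\M\Del_{\C}\theta_{\M,\C})$ is a composite of equivalences, hence an equivalence; and $\coev$ is an equivalence by the last line of \rmref{invertible-equiv}. Since both are also $\C$-bimodule functors, they are $\C$-bimodule equivalence functors, as claimed.
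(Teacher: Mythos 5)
Your handling of the first triangle identity coincides with the paper's: unwind $(\ev\Del_{\C}\M)(\M\Del_{\C}\coev)$ on $M$ and invoke \equref{coev eq}. The genuine gap is the second triangle identity. Your direct computation correctly lands on $\oplus_{i}W_i\crta\ot\u\Hom_{\M}(V_i,N)$ (with $\crta\ot$ the right $\C$-action on $\M^{op}$, i.e.\ $\oplus_i\u\Hom_{\M}(V_i,N)^*\crta\ot W_i$ back in $\M$), and this is \emph{not} an instance of \equref{coev eq}; neither of your two proposed ways of finishing actually closes it. The ``symmetry'' route fails because in the first zig-zag for $\M^{op}$ the functors $\ev_{\M^{op}}$ and $\coev_{\M^{op}}$ (built from $\u\Hom_{\M^{op}}$) sit on the opposite sides of $\M^{op}$ from where $\coev_{\M}$ and $\ev_{\M}$ sit in $(\M^{op}\Del_{\C}\ev)(\coev\Del_{\C}\M^{op})$; since $\C$ is not assumed braided here there is no flip identifying the two composites, and identifying $\ev_{\M^{op}}$ with $\coev_{\M}$ is essentially what is to be proved. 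The fallback via uniqueness of duals is circular: one triangle identity alone does not make $(\M^{op},\ev,\coev)$ a duality datum, so uniqueness of duals cannot be invoked to obtain the second identity ``automatically.''

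The paper closes this gap with a different device, which is the one missing from your write-up: post-compose with $\ev$ and compute, using the interchange law and the already-established first identity,
$\ev\comp\bigl[\M\Del_{\C}(\M^{op}\Del_{\C}\ev)(\coev\Del_{\C}\M^{op})\bigr]\iso\ev\comp(\ev\Del_{\C}\M\Del_{\C}\M^{op})\comp(\M\Del_{\C}\coev\Del_{\C}\M^{op})\iso\ev$;
then cancel $\ev$ by its couniversal property as (the transport of) the counit of the adjunction $\M\Del_{\C}-\dashv\Fun_{\C}(\M,-)$ of \cite[Corollary 3.22]{Gr}. Some such appeal to the universal property (or an independent proof that $\oplus_i\u\Hom_{\M}(V_i,-)^*\crta\ot W_i\iso\Id_{\M^{op}}$) must be supplied. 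The remaining assertions --- that $\ev$ and $\coev$ are $\C$-bimodule functors, and that they are equivalences when $\M$ is invertible via \rmref{invertible-equiv} and \equref{functor ev} --- you argue exactly as the paper does.
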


\begin{proof}
Take $M\in\M$, we find: $(\ev\Del_{\C} \M)(\M\Del_{\C} \coev)(M)\iso(\ev\Del_{\C} \M)(\oplus_{i\in J} M \Del_{\C} W_i\Del_{\C}V_i)=
\oplus_{i\in J} \u\Hom_{\M}(M, W_i)\Del_{\C} V_i=M$ by \equref{coev eq}. To check the other identity, observe that
$\ev[\M\Del_{\C}(\M^{op}\Del_{\C} \ev)(\coev\Del_{\C} \M^{op})]\iso
\ev(\ev\Del_{\C} \M\Del_{\C}\M^{op})(\M\Del_{\C} \coev\Del_{\C}\M^{op})\iso\ev$, as the first axiom for dual objects is satisfied.
Now by the universal property of the evaluation functor (which is the counit of the adjunction) it follows that the second axiom also holds.

The last part follows by \rmref{invertible-equiv} and \equref{functor ev}.
\qed\end{proof}

\subsection{Right dual object for a bimodule category} \sslabel{right duals}

Let $\M$ be an $\E\x\C$-bimodule category and $\N$ a $\D\x\C$-bimodule categories. The functor category $\Fun(\M, \N)_{\C}$ is a
$\D\x\E$-bimodule category via
\begin{equation} \eqlabel{left}
(X\crta\ot F)(M)=X\crta\ot F(M)
\end{equation}
\begin{equation} \eqlabel{rright}
(F\crta\ot Y)(M)=F(Y\crta\ot M)
\end{equation}
for $X\in\D, Y\in\E, F\in\Fun(\M, \N)_{\C}$ and $M\in\M$.

\begin{lma} \lelabel{sigma functor}
Let $\C, \D, \E$ be finite tensor categories, let $\M$ be an $\E\x\C$-bimodule category and $\N$ a $\D\x\C$-bimodule category. The functor
\begin{equation}\eqlabel{sigma}
\sigma_{\M,\N}: \M\Del_{\C}{}^{op}\N\to\Fun(\N,\M)_{\C}
\end{equation}
given by $\sigma(M\Del_{\C}N)=M\crta\ot \o\Hom_{\N}(-, N)$ is an $\E\x\D$-bimodule equivalence.
\end{lma}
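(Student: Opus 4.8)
The plan is to mirror the argument for the left‐dual equivalence $\theta_{\M,\N}$ from \cite[Proposition 3.5]{ENO}, replacing the left inner hom $\u\Hom$ by the right inner hom $\o\Hom$ and keeping track of the opposite‐category constructions on the right. First I would check that $\sigma_{\M,\N}$ is well defined, i.e.\ that the assignment $(M,N)\mapsto M\crta\ot\o\Hom_{\N}(-,N)$, viewed as a functor on $\M\Del {}^{op}\N$, descends to the quotient $\M\Del_{\C}{}^{op}\N$. For this one needs a $\C$‑balancing isomorphism $\sigma(M\crta\ot X\Del_{\C}N)\iso\sigma(M\Del_{\C}X{}^{op}\crta\ot N)$. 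Using \equref{left op my} the right‑hand side is $M\crta\ot\o\Hom_{\N}(-, N\otb X^*)$; by \equref{(4) derecha} this equals $M\crta\ot(\o\Hom_{\N}(-,N)\ot X^*)$, while the left‑hand side is $(M\crta\ot X)\crta\ot\o\Hom_{\N}(-,N)$, and the two are identified by the mixed associativity $a$ of the $\E\x\C$‑bimodule $\M$ together with the duality $X\ot X^*\iso I$. One must then verify the pentagon‑type coherence \equref{C-balanced} for this balancing, which reduces to the bimodule coherences for $\M$ and the naturality of $\o\Hom$; I would relegate this to a routine diagram chase.

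Next I would check that $\sigma_{\M,\N}$ is $\E\x\D$‑bilinear with respect to the structures \equref{left-left}–\equref{right-left} on the source (transported through ${}^{op}(-)$ on the second factor) and \equref{left}–\equref{rright} on $\Fun(\N,\M)_{\C}$. The left $\D$‑action: $\sigma\big(M\Del_{\C}(X{}^{op}\crta\ot N)\big)=M\crta\ot\o\Hom_{\N}(-, N\otb X^*)=M\crta\ot(\o\Hom_{\N}(-,N)\ot X^*)$, and one wants this to be $X\crta\ot\sigma(M\Del_{\C}N)$, i.e.\ $\sigma(M\Del_{\C}N)(X\crta\ot-)$; here I would invoke \equref{mixed inner}, $\o\Hom_{\N}(X\crta\ot -, N)=\o\Hom_{\N}(-, {}^*X\crta\ot N)$ (applied in the appropriate variable), to move the $X$ across. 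The right $\E$‑action is immediate from \equref{rright} since $\sigma(X\crta\ot M\Del_{\C}N)=(X\crta\ot M)\crta\ot\o\Hom_{\N}(-,N)$. In each case the coherence of the bilinear structure follows from \equref{id C-balanced} and the associator rules, exactly as in the proof of \prref{sim bimod}.

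The heart of the lemma is that $\sigma_{\M,\N}$ is an equivalence, and this is the step I expect to be the main obstacle. The strategy is the standard one: exhibit $\sigma_{\M,\N}$ as a composite of known equivalences, or produce an explicit quasi‑inverse. The cleanest route is to observe that ${}^{op}(-)$ converts a right $\C$‑module category into a left one and carries $\o\Hom$ to $\u\Hom$: concretely, for the $\D\x\C$‑bimodule $\N$ one has a $\C$‑module equivalence relating $\Fun(\N,\M)_{\C}$ to $\Fun_{\C}({}^{op}\N, {}^{op}\M)^{op}$‑type data, under which $\sigma_{\M,\N}$ becomes (a variant of) $\theta_{{}^{op}\N,{}^{op}\M}$ from \equref{teta}, which is an equivalence by \cite[Proposition 3.5]{ENO}. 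Alternatively one checks directly that $\sigma$ is fully faithful using the defining adjunction \equref{right inner hom} of $\o\Hom$ together with the fact that $-\Del_{\C}-$ computes balanced functors, and essentially surjective because every object of $\Fun(\N,\M)_{\C}$ is a direct sum of functors of the form $M\crta\ot\o\Hom_{\N}(-,N)$ — this last point being the analogue of \equref{coev eq} on the right and the place where one really uses finiteness/exactness of the module categories. I would present the first route, since it lets me quote \cite[Proposition 3.5]{ENO} verbatim and only verify that the identifications are compatible with the bimodule structures; the bookkeeping with left versus right duals $({}^*X$ vs.\ $X^*)$ in \equref{left op my}–\equref{right op my} is the one genuinely delicate part and deserves a careful check.
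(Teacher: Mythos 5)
Your first two steps essentially coincide with the paper's proof: the balancing isomorphism is assembled from \equref{(4) derecha} together with the module associator of $\M$, and the $\E\x\D$-bilinearity is checked by exactly the chain of identities \equref{right op my}, \equref{mixed inner}, \equref{rright} that you indicate. One caveat on the balancing: your justification via ``the duality $X\ot X^*\iso I$'' is not available for a general object $X$ of a finite tensor category ($X\ot X^*\iso I$ only holds for invertible $X$); the identification of $(M\crta\ot X)\crta\ot\o\Hom_{\N}(-,N)$ with $M\crta\ot\o\Hom_{\N}(-,N\crta\ot X^*)$ should be phrased purely through \equref{(4) derecha} and the right module structure, as the paper does.

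The real divergence is in the equivalence step, and here your preferred route is the problematic one. The paper does not pass through $\theta$ of \equref{teta}; it produces an explicit quasi-inverse $J:\Fun(\N,\M)_{\C}\to\M\Del_{\C}{}^{op}\N$ by taking $J(F)$ to be the representing object of the functor $M\Del_{\C}N\mapsto\Hom_{\M}(M,F(N))$, following \cite[Theorem 3.20, Lemma 3.21]{Gr} --- this is precisely your second, ``fully faithful plus essentially surjective via representability,'' alternative. Your first route has a concrete obstruction that is more than bookkeeping: applying the opposite-category construction reverses the order of the Deligne factors (${}^{op}(\A\Del_{\C}\B)\simeq{}^{op}\B\Del_{\C}{}^{op}\A$), and composing the two different opposite constructions yields a double-dual twist (the right action on $({}^{op}\N)^{op}$ is through $X^{**}$, not $X$). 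Consequently $\theta_{{}^{op}\N,{}^{op}\M}$ has source $({}^{op}\N)^{op}\Del_{\C}{}^{op}\M$, with the factors in the opposite order from $\M\Del_{\C}{}^{op}\N$, and for a non-braided, non-pivotal $\C$ there is no canonical way to match the two. So quoting \cite[Proposition 3.5]{ENO} ``verbatim'' does not close the argument; I would commit to the representing-object construction of the quasi-inverse instead, which is what the paper does and what buys you, in addition, the explicit description of $\sigma^{-1}$ used later (e.g.\ in the definition of ${}^{op}\F$ and in \leref{alfa dagger}).
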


\begin{proof}
Observe that the functor $\sigma_1: \M\Del{}^{op}\N\to\Fun(\N,\M)$ given by $\sigma_1(M\Del N)=M\crta\ot \o\Hom_{\N}(-, N)$
is $\C$-balanced:
$$\begin{array}{rl}
\sigma_1(M\crta\ot X, N)\hskip-1em&\hspace{-0,2cm} =(M\crta\ot X)\crta\ot \o\Hom_{\N}(-, N)\iso M\crta\ot (X\ot\o\Hom_{\N}(-, N))\\
&=M\crta\ot (\o\Hom_{\N}(-, N)\ot X^*) \stackrel{\equref{(4) derecha}}{=}M\crta\ot \o\Hom_{\N}(-, N\crta\ot X^*) \\
& \stackrel{\equref{right op my}}{=} M\crta\ot \o\Hom_{\N}(-, X{}^{op}\crta\ot N)=\sigma_1(M, X{}^{op}\crta\ot N).
\end{array}.$$
The isomorphism $\sigma_1(M\crta\ot X, N)\to\sigma_1(M, X\crta\ot N)$
is thus given by the composition of the identity \equref{(4) derecha} and the right associativity functor $m_r$ for $\M$; it satisfies the coherence
for a $\C$-balanced functor because of the coherence for the right $\C$-action on $\M$ and the coherence of the successive application of
\equref{(4) derecha} for objects $X, Y\in\C$. Thus $\sigma_1$ induces $\sigma_2: \M\Del_{\C}{}^{op}\N\to\Fun(\N,\M)$.
The functor $\sigma_2(M\Del_{\C}N)=M\crta\ot \o\Hom_{\N}(-, N)$ is right $\C$-linear, as so is $\o\Hom_{\N}(-, N): \N\to {}^{op}\C$.
Thus $\sigma_{\M,\N}$ is well-defined. It is clearly left $\C$-linear. For right $\D$-linearity we find:
$$\begin{array}{rl}
\sigma_{\M,\N}(M\Del_{\C} \hskip-1em&\hspace{-0,2cm} N{}^{op}\crta\ot D)=M\crta\ot\o\Hom_{\N}(-, N{}^{op}\crta\ot D)\stackrel{\equref{right op my}}{=}
M\crta\ot\o\Hom_{\N}(-, {}^*D\crta\ot N)\\
& \stackrel{\equref{mixed inner}}{=}M\crta\ot\o\Hom_{\N}(D\crta\ot -, N)\stackrel{\equref{rright}}{=}M\crta\ot\o\Hom_{\N}( -, N)\crta\ot D=
\sigma_{\M,\N}(M\Del_{\C}N)\crta\ot D.
\end{array}$$

The proof that $\sigma_{\M,\N}$ is an equivalence is analogous to the proof of \cite[Theorem 3.20, Lemma 3.21]{Gr}. The inverse of $\sigma_{\M,\N}$
is induced by the functor $J: \Fun(\N,\M)_{\C}\to \M\Del_{\C}{}^{op}\N$ such that $J(F)$ is the representing object of the functor $M\Del_{\C} N\mapsto\Hom(M, F(N))$,
for $F\in\Fun(\N,\M)_{\C}$. That is, there is an equivalence $\Hom_{\M\Del_{\C}{}^{op}\N}(M\Del_{\C} N, J(F))=\Hom_{\M}(M, F(N))$.
\qed\end{proof}

(In \cite[Corollary 3.4.11]{Doug} the category equivalence from above lemma appears with a different, non-explicit proof.)

\medskip

Let $L:\C\to\Fun(\M,\M)_{\C}$ be the functor given by $L(X)=X\crta\ot -$. Then $L$ is $\C$-bilinear. For left $\C$-linearity we have:
$L(X\ot C)=(X\ot C)\crta\ot -\iso X\crta\ot (C\crta\ot -)=X\crta\ot L(C)$ for $X,C\in\C$, and for the right one:
$L(C\ot X)\iso C\crta\ot (X\crta\ot -)\stackrel{\equref{rright}}{=}L(C)\crta\ot X$.

\bigskip

Now let $\M$ be a $\C$-bimodule category.
From \leref{sigma functor} we have that $\sigma:=\sigma_{\C,\M}: {}^{op}\M\stackrel{\simeq}{\to}\Fun(\M,\C)_{\C}$ as
$\C$-bimodule categories. 
We define $\crta\coev: \C\to\M\Del_{\C} {}^{op}\M$ such that the triangle $\langle 1\rangle$ in the following diagram commutes:
\begin{equation} \eqlabel{crta coev}
\scalebox{0.88}{\bfig
 \putmorphism(-50,500)(1,0)[\C`\M\Del_{\C} {}^{op}\M`\crta\coev]{1070}1a
 \putmorphism(-50,0)(1,0)[\Fun(\M,\M)_{\C}`\M\Del_{\C}\Fun(\M,\C)_{\C}`\crta\db]{1250}{-1}a
\putmorphism(-40,500)(0,-1)[\phantom{B\ot B}``L]{480}1l
\putmorphism(950,500)(0,-1)[\phantom{B\ot B}``\M\Del_{\C}\sigma]{480}1r
\putmorphism(350,140)(2,1)[``\sigma_{\M, \M}]{340}{-1}l
\put(40,250){\fbox{1}}
\put(700,160){\fbox{2}}
\efig}
\end{equation}
Then $\crta\coev$ too is a $\C$-bimodule functor. The functor $\crta\db: \M\Del_{\C}\Fun(\M,\C)_{\C}\to\Fun(\M,\M)_{\C}$ is defined so that
$\crta\db(M\Del_{\C}F)=M\crta\ot F(-)$, for $F\in\Fun(\M,\M)_{\C}, M\in\M$.
Then the triangle $\langle 2\rangle$ above commutes, and $\crta\db$ is a $\C$-bimodule equivalence functor.
From the definition of $\crta\coev$ we see that $\crta\coev(I)=\oplus_{i\in J} V_i\Del_{\C}W_i$ is such an object in $\M\Del_{\C} {}^{op}\M$
that $\Id_{\M}= \oplus_{i\in J} V_i\crta\ot\o\Hom_{\M}(-, W_i)$. That is, for every $M\in\M$ one has:
\begin{equation} \eqlabel{crta coev eq}
\oplus_{i\in J} V_i\crta\ot\o\Hom_{\M}(M, W_i)=M.
\end{equation}

\bigskip

The next properties are easy to deduce:

\begin{lma}
\begin{itemize}
\item[1)] ${}^{op}(\Fun(\M, \N)_{\C})\simeq\Fun(\N, \M)_{\C}$ as $\D\x\E$-bimodule categories, for ${}_{\D}\M_{\C}, {}_{\E}\N_{\C}$;
\item[2)] ${}^{op}(\M\Del_{\C}\N)\simeq {}^{op}\N\Del_{\C}{}^{op}\M$ as $\E\x\D$-bimodule categories, for ${}_{\D}\M_{\C}, {}_{\C}\N_{\E}$.
\end{itemize}
\end{lma}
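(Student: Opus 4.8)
The plan is to prove both isomorphisms by the standard "duality-via-inner-hom" argument, transporting known facts about the functor categories $\Fun(\M,\N)_\C$ and $\Fun_\C(\M,\N)$ along the equivalences $\theta$, $\sigma$, $\mathrm{db}$ and $\overline{\mathrm{db}}$ that have already been set up. For part 2) I would first observe that $\M\Del_\C\N$ represents $\Fun_\C(\M^{op},\N)$ via $\theta_{\M^{op},\N}$ (since $(\M^{op})^{op}\simeq\M$), so dualizing/opposing should interchange the two slots; more directly, I would apply $^{op}(-)$ to both sides and use the descriptions $^{op}\M\simeq\Fun(\M,\C)_\C$, $^{op}\N\simeq\Fun(\C,\N)_\C$ together with the fact that $\Del_\C$ over finite tensor categories is, up to equivalence, the "composition" of bimodule categories viewed as bimodule functors. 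The cleanest route: use $\sigma$ and $\theta$ to write $^{op}(\M\Del_\C\N)\simeq {}^{op}(\text{something symmetric in }\M,\N)$ and compare with $^{op}\N\Del_\C{}^{op}\M$ term by term; the module structures match because \eqref{left op my}, \eqref{right op my} reverse the roles of left and right duals exactly as reversing a composition of bimodule functors does.

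For part 1), the key input is the pair of natural descriptions of the functor bimodule category: on one hand $\Fun(\M,\N)_\C$ has $\D\x\E$-action given by \eqref{left}, \eqref{rright}; on the other hand, by \leref{sigma functor} applied in the appropriate variables, $\Fun(\N,\M)_\C\simeq \N\Del_\C{}^{op}\M$ as $\E\x\D$-bimodule categories, while $\Fun(\M,\N)_\C\simeq\M\Del_\C{}^{op}\N$ as $\D\x\E$-bimodule categories. Combining this with part 2) gives
$$
{}^{op}(\Fun(\M,\N)_\C)\simeq{}^{op}(\M\Del_\C{}^{op}\N)\simeq{}^{op}({}^{op}\N)\Del_\C{}^{op}\M\simeq\N\Del_\C{}^{op}\M\simeq\Fun(\N,\M)_\C,
$$
and one checks that each equivalence in this chain is $\D\x\E$-bilinear (where $^{op}$ on the functor category converts the $\D\x\E$-structure into an $\E\x\D$-one and then back), using $^{op}({}^{op}\mo)\iso\mo$ and the compatibility of \eqref{left}, \eqref{rright} with \eqref{left op my}, \eqref{right op my}. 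So in fact part 1) is a formal consequence of part 2) plus \leref{sigma functor}, and I would present it in that order: prove 2) first, then deduce 1).

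Alternatively — and this may be the more transparent way to write 2) — I would give a direct functor: send $^{op}(\M\Del_\C\N)$ to $^{op}\N\Del_\C{}^{op}\M$ by $M\Del_\C N\mapsto N\Del_\C M$ on underlying objects, exactly as with the flip $\tau$ in \prref{sim bimod}, except that now there is no braiding and no identification of left/right duals, so the balancing and the bimodule coherences must be checked by hand using the $\C$-balanced structure of $\Del_\C$ and the definitions \eqref{left op my}, \eqref{right op my} of the $^{op}$-action. The $\C$-balancing $(M\crta\ot X)\Del_\C N = M\Del_\C(X\crta\ot N)$ in $\M\Del_\C\N$, after passing to $^{op}$, must be matched with the identity $(X{}^{op}\crta\ot N)\Del_\C{}^{op}M = N\Del_\C{}^{op}(X{}^{op}\crta\ot M)$ in ${}^{op}\N\Del_\C{}^{op}\M$, and one verifies this reduces to $N\otb X^* \Del_\C M = N\Del_\C M\otb X^*$ matched against the original balancing via \eqref{(4) derecha}-type adjunction identities.

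The main obstacle will be purely bookkeeping: tracking which of the four module structures \eqref{left op}, \eqref{right op}, \eqref{left op my}, \eqref{right op my} is in play on each factor and verifying that the coherence diagrams (the associators $m^{op}$, which involve $^*(-)$ or $(-)^*$) are respected by the flip — i.e. that $m^{op}$ on $^{op}(\M\Del_\C\N)$ corresponds under the flip to the interchanged associators on $^{op}\N$ and $^{op}\M$. This is exactly the kind of diagram chase the paper repeatedly defers "to the reader", so I expect the proof to be short, stating the two equivalences explicitly, pointing to \leref{sigma functor}, $^{op}({}^{op}\mo)\iso\mo$, \thref{teta} (the $\theta$-equivalence), and the compatibility identities \eqref{left op my}--\eqref{right op my}, \eqref{(4) derecha}, \eqref{mixed inner}, and leaving the coherence verification as routine.
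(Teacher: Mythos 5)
The paper gives no proof of this lemma (it is introduced with ``The next properties are easy to deduce''), so your plan can only be judged on its own terms. Your reduction of part 1) to part 2) is structurally sound: by \leref{sigma functor} one has $\Fun(\M,\N)_{\C}\simeq\N\Del_{\C}{}^{op}\M$ and $\Fun(\N,\M)_{\C}\simeq\M\Del_{\C}{}^{op}\N$ (you state these two identifications with $\M$ and $\N$ interchanged, and note that ${}^{op}\N\simeq\Fun(\N,\C)_{\C}$, not $\Fun(\C,\N)_{\C}$ --- the latter is just $\N$), so part 1) does follow from part 2) together with ${}^{op}({}^{op}\M)\iso\M$ and the bilinearity of $\sigma$.

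The genuine gap is in the ``more transparent'' direct proof you propose for part 2). The flip $M\Del_{\C}N\mapsto N\Del_{\C}M$ is \emph{not} $\C$-balanced for the ${}^{op}$-actions over a general finite tensor category. Concretely, the identification available in ${}^{op}\N\Del_{\C}{}^{op}\M$ is, by \equref{right op my} and \equref{left op my}, $({}^*X\crta\ot N)\Del_{\C}M\iso N\Del_{\C}(M\otb X^*)$; substituting $X=W^*$ this reads $(W\crta\ot N)\Del_{\C}M\iso N\Del_{\C}(M\otb W^{**})$, whereas the image under the flip of the balancing of $\M\Del_{\C}\N$ is $(W\crta\ot N)\Del_{\C}M\iso N\Del_{\C}(M\otb W)$. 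These differ by the double dual $W^{**}$, and a finite tensor category carries no natural monoidal trivialization of $(-)^{**}$ in general; your proposed verification via ``\equref{(4) derecha}-type adjunction identities'' does not confront this discrepancy. The flip does work when $\C$ is pivotal --- in particular in the symmetric, one-sided setting where the paper actually uses the lemma --- but the lemma is stated for arbitrary finite tensor categories. The robust argument for 2) is the one your first paragraph gestures at but does not carry out: equip ${}^{op}\N\Del_{\C}{}^{op}\M$ with the composite evaluation $\crta\ev_{\N}\comp({}^{op}\N\Del_{\C}\crta\ev_{\M}\Del_{\C}\N)$ and the analogous coevaluation, verify the snake identities from those of $\M$ and $\N$ (\prref{ev-db-equiv}), and invoke uniqueness of right dual objects; the resulting equivalence with ${}^{op}(\M\Del_{\C}\N)$ is the canonical comparison of duals, which is not the identity on underlying objects.
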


The following proposition can be proved directly, or alternatively, by using successive applications of the equivalence $\sigma$ from
\equref{sigma} and the properties from the above lemma:

\begin{prop} \prlabel{Fun adj}
Let $\N$ be a $\D\x\C$-bimodule category, $\M$ be a $\C\x\E$-bimodule category and $\A$ an $\F\x\E$-bimodule category. Then there is an
isomorphism of $\F\x\D$-bimodule categories:
$$\Fun(\N\Del_{\C}\M, \A)_{\E}\iso\Fun(\N, \Fun(\M, \A)_{\E})_{\C}.$$
\end{prop}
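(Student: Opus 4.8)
The plan is to establish the adjunction-type isomorphism
$$\Fun(\N\Del_{\C}\M, \A)_{\E}\iso\Fun(\N, \Fun(\M, \A)_{\E})_{\C}$$
by exhibiting mutually inverse bimodule functors in each direction, modeled on the classical tensor-hom adjunction. In one direction, I would send a right $\E$-module functor $G:\N\Del_{\C}\M\to\A$ to the functor $\widehat G:\N\to\Fun(\M,\A)_{\E}$ defined by $\widehat G(N)=G(N\Del_{\C}-)$; one checks that $G(N\Del_{\C}-)$ is indeed right $\E$-linear because $G$ is, so $\widehat G(N)\in\Fun(\M,\A)_{\E}$, and that $\widehat G$ is right $\C$-linear using the identification $(N\crta\ot X)\Del_{\C}M=N\Del_{\C}(X\crta\ot M)$ in $\N\Del_{\C}\M$ together with formula \equref{rright} for the right $\C$-action on $\Fun(\M,\A)_{\E}$. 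In the other direction, given $H:\N\to\Fun(\M,\A)_{\E}$, I would produce a functor $\N\times\M\to\A$, $(N,M)\mapsto H(N)(M)$, verify it is $\C$-balanced (the balancing isomorphism comes from the right $\C$-linearity structure of $H$ combined with \equref{rright}, and the coherence \equref{C-balanced} reduces to the coherence of that module structure), hence by the universal property of $\Del_{\C}$ it factors through a functor $\check H:\N\Del_{\C}\M\to\A$ with $\check H(N\Del_{\C}M)=H(N)(M)$; then one checks $\check H$ is right $\E$-linear.

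Next I would verify that the two assignments $G\mapsto\widehat G$ and $H\mapsto\check H$ are quasi-inverse to each other: $\widehat{\check H}(N)(M)=\check H(N\Del_{\C}M)=H(N)(M)$ gives $\widehat{\check H}\iso H$, and $\check{\widehat G}(N\Del_{\C}M)=\widehat G(N)(M)=G(N\Del_{\C}M)$ gives $\check{\widehat G}\iso G$; in both cases the natural isomorphisms on morphisms and the compatibility with the balancing/module structures are routine diagram chases. Finally I would check that the resulting equivalence is itself a functor of $\F\x\D$-bimodule categories, i.e. compatible with the $\F$-action (inherited from the $\F$-action on $\A$ via post-composition, cf. \equref{left}) and the $\D$-action (inherited from the $\D$-actions on $\N$ via pre-composition, cf. \equref{rright}); this amounts to noting that all structure maps in sight are built from the same underlying module structures on $\N$, $\M$, $\A$, so they commute on the nose (or up to the evident coherent isomorphism).

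As the excerpt itself suggests, there is a slicker route: iterate the equivalence $\sigma_{\M,\N}$ from \equref{sigma} of \leref{sigma functor} together with the two identities in the unnamed lemma preceding \prref{Fun adj}. Concretely, using $\Fun(\Pp,\Qq)_{\C}\simeq\Qq\Del_{\C}{}^{op}\Pp$ repeatedly, the left-hand side becomes $\A\Del_{\E}{}^{op}(\N\Del_{\C}\M)\simeq\A\Del_{\E}({}^{op}\M\Del_{\C}{}^{op}\N)$ by part 2) of that lemma, while the right-hand side becomes $\Fun(\M,\A)_{\E}\Del_{\C}{}^{op}\N\simeq(\A\Del_{\E}{}^{op}\M)\Del_{\C}{}^{op}\N$; the two are then identified by the associativity of $\Del$ over the relevant tensor categories (the canonical associativity equivalence $(\Xx\Del_{\D}\Yy)\Del_{\E}\Zz\simeq\Xx\Del_{\D}(\Yy\Del_{\E}\Zz)$ recalled in the Preliminaries). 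One must, however, be careful that $\sigma_{\M,\N}$ as stated requires a right $\C$-module (i.e. $\E\x\C$- and $\D\x\C$-bimodule) setup, so some applications use the mirrored/$(-)^{op}$ version; bookkeeping the one-sidedness of all the inner-hom equivalences is exactly where care is needed.

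\textbf{Main obstacle.} The conceptual content is standard tensor-hom adjunction, so no single step is deep; the real work — and the place an error is most likely to creep in — is the coherence bookkeeping: verifying that the balancing isomorphism of $(N,M)\mapsto H(N)(M)$ genuinely satisfies \equref{C-balanced}, and then checking that the final equivalence respects \emph{both} the $\F$- and the $\D$-bimodule structures simultaneously and coherently. If instead one takes the $\sigma$-iteration route, the main obstacle is tracking the correct variance (left vs.\ right module, $\M^{op}$ vs.\ ${}^{op}\M$) at each application of \leref{sigma functor} and of the preceding lemma, and then matching the two sides through the associativity constraint for $\Del$ over finite tensor categories.
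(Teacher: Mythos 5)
Your proposal is correct and follows exactly the two routes the paper itself indicates (which it leaves unproved): the direct tensor--hom adjunction argument via $\C$-balanced functors, and the alternative reduction through successive applications of the equivalence $\sigma$ from \equref{sigma} together with the identities ${}^{op}(\N\Del_{\C}\M)\simeq{}^{op}\M\Del_{\C}{}^{op}\N$ and the associativity of $\Del$. Your bookkeeping of the one-sided module structures in the $\sigma$-route checks out, so nothing further is needed.
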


From \prref{Fun adj} we see that given a $\C\x\E$-bimodule category $\M$ there is an adjoint pair of functors:
$-\Del_{\C}\M: \D\x\C\x\Bimod\to\D\x\E\x\Bimod: \Fun(\M, -)_{\E}$. The unit of the adjunction evaluated at a $\C\x\D$-bimodule category $\N$
is a $\C\x\D$-bilinear functor $\alpha(\N): \N\to\Fun(\M, \N\Del_{\C}\M)_{\E}$ given by $N\mapsto N\Del_{\C}-$. If $\D=\C=\N$, we get a
$\C$-bilinear functor $\alpha: \C\to\Fun(\M, \C\Del_{\C}\M)_{\E}=\Fun(\M, \M)_{\E}$ such that $\alpha(X)=X\Del_{\C}-$. The counit of the adjunction
in this case is a $\C$-bilinear functor $\beta: \Fun(\M, \C)_{\E}\Del_{\C}\M\to\C$ which is the evaluation functor.

\begin{rem} \rmlabel{invertible-equiv right}
When $\E=\C$ and $\M$ is an invertible $\C$-bimodule we have that the unit of the adjunction $L: \C\to\Fun(\M, \C\Del_{\C}\M)_{\C}$, and
hence also the counit $\crta\Ev: \Fun(\M, \C)_{\C}\Del_{\C}\M\to\C$, is an equivalence \cite[Proposition 4.2]{ENO}. In this case the functor
$\crta\coev$ from \equref{crta coev} is an equivalence, too.
\end{rem}

For a $\C$-bimodule category $\M$ we define the functor $\crta\ev: {}^{op}\M\Del_{\C}\M\to\C$ through the commuting diagram:
\begin{eqnarray} \eqlabel{functor crta ev}
\scalebox{0.88}{\bfig
 \putmorphism(-180,500)(1,0)[{}^{op}\M\Del_{\C} \M`\C`\crta\ev]{800}1a
 \putmorphism(-280,0)(1,1)[\Fun(\M,\C)_{\C}\Del_{\C}\M``]{200}0a
\putmorphism(240,130)(1,1)[``\crta\Ev]{240}1r
\putmorphism(-40,500)(0,-1)[\phantom{B\ot B}``\sigma\Del_{\C}\M]{480}1l
\efig}
\end{eqnarray}
Then $\crta\ev$ is $\C$-bilinear and it is $\crta\ev(M\Del_{\C}N)=\o\Hom_{\M}(N,M)$.

\begin{prop} \prlabel{ev-db-equiv}
Let $\C$ be a braided finite tensor category and let $\M$ be a $\C$-bimodule category. The object ${}^{op}\M$ together with the functors
$$\crta\ev: {}^{op}\M\Del_{\C}\M\to\C\quad\textnormal{and}\quad\crta\coev: \C\to \M\Del_{\C}{}^{op}\M$$
is a right dual object for $\M$ in the monoidal category $(\C\x\Bimod, \C, \Del_{\C})$.

Consequently, $(\M\Del_{\C} \crta\ev)(\crta\coev\Del_{\C} \M)\iso\Id_{\M}$ and $(\crta\ev\Del_{\C} {}^{op}\M)({}^{op}\M\Del_{\C} \crta\coev)\iso\Id_{{}^{op}\M}$.

If $\M$ is an invertible $\C$-bimodule category, the functors $\crta\ev$ and $\crta\coev$ are $\C$-bimodule equivalence functors.
\end{prop}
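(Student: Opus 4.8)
The plan is to follow the proof of \prref{left dual} step by step, replacing each ingredient by its ``right-handed'' counterpart: \equref{coev eq} by \equref{crta coev eq}, the formula $\ev(M\Del_{\C}N)=\u\Hom_{\M}(M,N)$ by $\crta\ev(M\Del_{\C}N)=\o\Hom_{\M}(N,M)$ from \equref{functor crta ev}, \rmref{invertible-equiv} by \rmref{invertible-equiv right}, and the universal property of the counit $\Ev$ by that of the counit $\crta\Ev$ of the adjunction from \prref{Fun adj}. Concretely one must verify the two triangle identities for the pair $(\crta\ev,\crta\coev)$ and then read off the invertibility statement.

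First I would verify $(\M\Del_{\C}\crta\ev)(\crta\coev\Del_{\C}\M)\iso\Id_{\M}$. Writing $\crta\coev(I)=\oplus_{i\in J}V_i\Del_{\C}W_i$ as in the definition \equref{crta coev}, one has $(\crta\coev\Del_{\C}\M)(M)\iso\oplus_{i\in J}V_i\Del_{\C}W_i\Del_{\C}M$ for $M\in\M$; applying $\M\Del_{\C}\crta\ev$ and using $\crta\ev(W_i\Del_{\C}M)=\o\Hom_{\M}(M,W_i)$, this becomes $\oplus_{i\in J}V_i\Del_{\C}\o\Hom_{\M}(M,W_i)$, which equals $M$ by \equref{crta coev eq}. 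This computation is routine, exactly parallel to the first half of the proof of \prref{left dual}.

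For the second identity $(\crta\ev\Del_{\C}{}^{op}\M)({}^{op}\M\Del_{\C}\crta\coev)\iso\Id_{{}^{op}\M}$ I would argue as in \prref{left dual}: denoting this endofunctor of ${}^{op}\M$ by $g$, precompose $\crta\ev$ with $g\Del_{\C}\M$ and unfold the definition of $g$, rearranging the resulting four-fold Deligne product by the interchange law and by the coherence $\crta\ev\circ(\crta\ev\Del_{\C}{}^{op}\M\Del_{\C}\M)\iso\crta\ev\circ({}^{op}\M\Del_{\C}\M\Del_{\C}\crta\ev)$, so that the first triangle identity appears inside and yields $\crta\ev\circ(g\Del_{\C}\M)\iso\crta\ev$. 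Since by \equref{functor crta ev} the functor $\crta\ev$ is the composite of the equivalence $\sigma\Del_{\C}\M$ with the counit $\crta\Ev$, the universal property of $\crta\Ev$ forces $g\iso\Id_{{}^{op}\M}$. The last assertion is then immediate: when $\M$ is invertible, $\crta\coev$ is an equivalence by \rmref{invertible-equiv right}, and $\crta\ev$ is an equivalence by \equref{functor crta ev}, being the composite of $\sigma\Del_{\C}\M$ with $\crta\Ev$.

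I expect the only delicate point to be the bookkeeping in the second step — making the rearrangement of the four-fold product precise and phrasing the universal-property cancellation cleanly — but this is verbatim the argument already used for \prref{left dual}, so it carries over without new ideas. One could alternatively avoid re-deriving it by observing that ``${}^{op}\M$ is a right dual of $\M$'' is the same datum as ``$\M$ is a left dual of ${}^{op}\M$'', and then applying \prref{left dual} to ${}^{op}\M$ after checking $({}^{op}\M)^{op}\iso\M$ as $\C$-bimodule categories — here the braiding of $\C$ assumed in the statement would enter through ${}^*X\iso X^*$ — and matching the resulting evaluation and coevaluation with $\crta\ev$ and $\crta\coev$.
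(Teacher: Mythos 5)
Your proposal is correct and follows essentially the same route as the paper: the first triangle identity is checked directly on $\crta\coev(I)=\oplus_{i\in J}V_i\Del_{\C}W_i$ via \equref{crta coev eq}, the second is deduced by precomposing with $\crta\ev$ and invoking the universal property of the counit $\crta\Ev$, and the invertibility statement is read off from \rmref{invertible-equiv right} and \equref{functor crta ev}. The alternative you sketch (dualizing \prref{left dual} applied to ${}^{op}\M$) is a reasonable shortcut but is not the argument the paper uses; the main argument you give is the paper's.
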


\begin{proof}
Take $M\in\M$, we find: $(\M\Del_{\C} \crta\ev)(\crta\coev\Del_{\C} \M)(M)=(\M\Del_{\C} \crta\ev)(\oplus_{i\in J} V_i\Del_{\C}W_i \Del_{\C} M)=
\oplus_{i\in J} V_i\Del_{\C}\o\Hom_{\M}(M, W_i)=M$ by \equref{crta coev eq}. To check the other identity, observe that
$\crta\ev[(\crta\ev\Del_{\C} {}^{op}\M)({}^{op}\M\Del_{\C} \crta\coev) \Del_{\C}\M]=
\crta\ev(\crta\ev\Del_{\C} {}^{op}\M\Del_{\C}\M)({}^{op}\M\Del_{\C} \crta\coev\Del_{\C}\M)=
\crta\ev({}^{op}\M\Del_{\C}\M\Del_{\C} \crta\ev)({}^{op}\M\Del_{\C} \crta\coev\Del_{\C}\M)=\crta\ev$, as the first axiom for dual objects is satisfied.
Now by the universal property of the evaluation functor (which is the counit of the adjunction) it follows that the second axiom also holds.

The last part follows by \rmref{invertible-equiv right} and \equref{functor crta ev}.
\qed\end{proof}

\begin{rem}
In \cite[Proposition 4.22]{Schauma} the left $\M^{\sharp}$ and the right dual ${}^{\sharp}\M$ of a $\C$-bimodule category are constructed. They are versions
of our $\M^{op}$ and ${}^{op}\M$.
\end{rem}

\subsection{Some applications}

We will prove here some claims that will be useful in our future computations. 

\begin{lma}\lelabel{basic lema}
Let 
$\F, \G: \M\to\N$ be $\D\x\C$-bimodule functors and let $\Pp$ be an invertible $\C$-bimodule category.
\begin{enumerate}
\item It is $\F\Del_{\C}\Id_{\Pp}\simeq\G\Del_{\C}\Id_{\Pp}$ if and only if $\F\simeq\G$.
\item If $\HH: \Pp\to\Ll$ is a $\C$-bimodule equivalence functor, it is $\F\Del_{\C}\HH\simeq\G\Del_{\C}\HH$ if and only if $\F\simeq\G$.
\end{enumerate}
\end{lma}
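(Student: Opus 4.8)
The plan is to reduce both statements to the trivial observation that $\F\Del_{\C}\Id_{\C}\iso\F$, using invertibility of $\Pp$ to ``cancel'' it; throughout, an equality of bimodule functors will be read up to bimodule natural isomorphism, which is all the manipulations produce. The $(\Leftarrow)$ directions of (1) and (2) are immediate, so I focus on $(\Rightarrow)$.

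For part (1), I would start from $\F\Del_{\C}\Id_{\Pp}=\G\Del_{\C}\Id_{\Pp}:\M\Del_{\C}\Pp\to\N\Del_{\C}\Pp$, apply the operation $-\Del_{\C}\Id_{\Pp^{op}}$, and then pre- and postcompose with the canonical associativity equivalences of bimodule categories $(\M\Del_{\C}\Pp)\Del_{\C}\Pp^{op}\simeq\M\Del_{\C}(\Pp\Del_{\C}\Pp^{op})$ and $(\N\Del_{\C}\Pp)\Del_{\C}\Pp^{op}\simeq\N\Del_{\C}(\Pp\Del_{\C}\Pp^{op})$ --- these are $\D\x\C$-bilinear by \cite[Remark 3.6]{ENO} and natural in the first variable with respect to $\F$ and $\G$, so the outcome is $\F\Del_{\C}\Id_{\Pp\Del_{\C}\Pp^{op}}=\G\Del_{\C}\Id_{\Pp\Del_{\C}\Pp^{op}}$. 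Since $\Pp$ is invertible, there is a $\C$-bimodule equivalence $\epsilon:\Pp\Del_{\C}\Pp^{op}\stackrel{\simeq}{\to}\C$; transporting along the induced $\D\x\C$-bimodule equivalences $\Id_{\M}\Del_{\C}\epsilon$ and $\Id_{\N}\Del_{\C}\epsilon$ turns this into $\F\Del_{\C}\Id_{\C}=\G\Del_{\C}\Id_{\C}$. Finally the unit equivalence $\M\Del_{\C}\C\simeq\M$ of $\D\x\C$-bimodule categories intertwines $\F\Del_{\C}\Id_{\C}$ with $\F$ and $\G\Del_{\C}\Id_{\C}$ with $\G$, whence $\F=\G$.

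For part (2), given a $\C$-bimodule equivalence $\HH:\Pp\to\Ll$ and the hypothesis $\F\Del_{\C}\HH=\G\Del_{\C}\HH$, I would use the interchange law for $\Del_{\C}$ of bimodule functors to factor $\F\Del_{\C}\HH=(\Id_{\N}\Del_{\C}\HH)\circ(\F\Del_{\C}\Id_{\Pp})$, and likewise for $\G$. Since $\HH$ is an equivalence, $\Id_{\N}\Del_{\C}\HH$ is an equivalence of $\D\x\C$-bimodule categories; composing the hypothesis on the left with a quasi-inverse $\Id_{\N}\Del_{\C}\HH^{-1}$ cancels it and leaves $\F\Del_{\C}\Id_{\Pp}=\G\Del_{\C}\Id_{\Pp}$, so part (1) finishes the job.

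The main obstacle is not the cancellation trick but making the bookkeeping honest: one has to verify that the three structural equivalences invoked --- associativity of $\Del_{\C}$, the unit equivalence $\M\Del_{\C}\C\simeq\M$, and the interchange identity $(\Id_{\N}\Del_{\C}\HH)\circ(\F\Del_{\C}\Id_{\Pp})=\F\Del_{\C}\HH$ --- are natural with respect to $\F$ and $\G$, so that transporting $\F\Del_{\C}\Id_{(-)}$ along them genuinely reproduces $\F$ up to a coherent bimodule natural isomorphism (and the same for $\G$). These naturalities are all instances of the $2$-functoriality of $-\Del_{\C}-$ recorded in (or readily extracted from) \cite[Remark 3.6]{ENO} and \cite{Gr}, so the remaining computations are routine diagram chases.
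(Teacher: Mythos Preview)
Your proposal is correct and follows essentially the same route as the paper: for (1) you tensor by $\Id_{\Pp^{op}}$ and then conjugate by an equivalence $\Pp\Del_{\C}\Pp^{op}\simeq\C$ to reduce to $\F\Del_{\C}\Id_{\C}=\G\Del_{\C}\Id_{\C}$, and for (2) you precompose with $\Id_{\N}\Del_{\C}\HH^{-1}$ and invoke (1). The only cosmetic difference is that the paper names the equivalence explicitly as $\ev_{\Pp}$ (shown to be an equivalence for invertible $\Pp$ in \prref{left dual}), whereas you appeal directly to the defining equivalence of invertibility; your extra remarks on naturality of associativity and the unit equivalence are bookkeeping the paper leaves implicit.
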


\begin{proof}
\begin{enumerate}
\item
We tensor the identity $\F\Del_{\C}\Id_{\Pp}\simeq\G\Del_{\C}\Id_{\Pp}$ from the right by $\Id_{\Pp^{op}}$ and then ``conjugate'' by 
$\Id_{\N}\Del_{\C} \ev_{\Pp}$. We get:
$(\Id_{\N}\Del_{\C} \ev_{\Pp})(\F\Del_{\C}\Id_{\Pp}\Del_{\C}\Id_{\Pp^{op}})(\Id_{\M}\Del_{\C} \ev_{\Pp}^{-1})\simeq
(\Id_{\N}\Del_{\C} \ev_{\Pp})(\G\Del_{\C}\Id_{\Pp^{op}}\Del_{\C}\Id_{\Pp})(\Id_{\M}\Del_{\C} \ev_{\Pp}^{-1})$, which is the same as saying that  
$\F\simeq\F\Del_{\C}\Id_{\C}=\G\Del_{\C}\Id_{\C}\simeq\G$.
\item We compose the identity $\F\Del_{\C}\HH\simeq\G\Del_{\C}\HH$ from the left by $\Id_{\N}\Del_{\C}\HH^{-1}$ and apply the part 1).
\end{enumerate}
\qed\end{proof}

For a $\C\x\D$-bimodule functor $\F:\M\to\N$ the $\D\x\C$-bimodule functor ${}^{op}\F: {}^{op}\N\to{}^{op}\M$ is given by ${}^{op}\F=\sigma_{\M, \C}^{-1}\comp\F^*\comp\sigma_{\N, \C}$.
Here $\F^*: \Fun_{\C}(\N, \C)\to\Fun_{\C}(\M, \C)$ is given by $\F^*(G)=G\comp\F$ and $\sigma$ is the equivalence from \equref{sigma}.
If $\F$ is an equivalence, it is $(\F^*)^{-1}=(\F^{-1})^*=-\comp\F^{-1}$ and consequently: $({}^{op}\F)^{-1}={}^{op}(\F^{-1})$.

\begin{lma} \lelabel{alfa dagger}
Let $\C$ be a finite tensor category and $\M$ an exact $\C$-bimodule category and let $\alpha: \M\to\C$ be a $\C$-bimodule equivalence. Then:
$\crta\ev\simeq({}^{op}\alpha)^{-1} \Del_{\C}\alpha$ as right $\C$-linear functors.
\end{lma}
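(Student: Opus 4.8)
The plan is to unwind the definition of $\crta\ev$ from the commuting diagram \equref{functor crta ev} and compare it, term by term, with the functor $({}^{op}\alpha)^{-1}\Del_{\C}\alpha$, using the explicit formula for $\sigma$ given in \leref{sigma functor} together with the description of $({}^{op}\alpha)^{-1}={}^{op}(\alpha^{-1})=\sigma_{\C,\C}^{-1}\comp(\alpha^{-1})^*\comp\sigma_{\M,\C}$ recorded just before the statement. Since both sides are right $\C$-linear functors $({}^{op}\M)\Del_{\C}\M\to\C$, and $\M$ is exact (hence an object of a closed monoidal category with well-behaved $\Del_{\C}$), it suffices to check that they agree on objects of the form $N\Del_{\C}M$ with $N\in{}^{op}\M$, $M\in\M$; by biexactness and right $\C$-linearity of everything in sight, agreement on such generators forces agreement of the functors.

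First I would compute the right-hand side on $N\Del_{\C}M$. We have $\alpha(M)\in\C$, and $({}^{op}\alpha)^{-1}(N)=\sigma_{\C,\C}^{-1}\bigl((\alpha^{-1})^*(\sigma_{\M,\C}(N))\bigr)$. Here $\sigma_{\M,\C}(N)\in\Fun(\M,\C)_{\C}$ is, by \leref{sigma functor} applied with $\E=\C$ and the roles chosen so that $\sigma=\sigma_{\C,\M}\colon{}^{op}\M\to\Fun(\M,\C)_{\C}$, the functor $\o\Hom_{\M}(-,N)$ (note $\sigma_{\C,\M}(I\Del_{\C}N)=I\crta\ot\o\Hom_{\M}(-,N)=\o\Hom_{\M}(-,N)$). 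So $({}^{op}\alpha)^{-1}(N)$ is the object of ${}^{op}\M$ corresponding under $\sigma_{\C,\M}^{-1}$... — more precisely I must be careful: $({}^{op}\alpha)^{-1}\colon{}^{op}\C\to{}^{op}\M$ sends $N$ (viewing $N$ now as an object of ${}^{op}\C\simeq\C$ via $\alpha$) to the object of ${}^{op}\M$ that represents $\o\Hom_{\M}(-,N)\comp\alpha^{-1}$. Then $\crta\ev(N\Del_{\C}M)=\o\Hom_{\M}(M,N)$ by the formula stated right after \equref{functor crta ev}, and I want $\bigl(({}^{op}\alpha)^{-1}\Del_{\C}\alpha\bigr)(N\Del_{\C}M)=\o\Hom_{\M}(M,N)$ as well. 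The comparison reduces to the assertion that applying $\alpha$ in the second slot and $({}^{op}\alpha)^{-1}$ in the first slot, then composing with $\crta\ev_{\M}$ on $\M\Del_{\C}{}^{op}\M$-type data (or rather with $\crta\Ev$ through $\sigma\Del_{\C}\M$), returns $\o\Hom$ evaluated the same way; this is exactly naturality of the counit $\crta\Ev$ of the adjunction $(-\Del_{\C}\M,\Fun(\M,-)_{\C})$ with respect to the bimodule equivalence $\alpha$, i.e.\ the commuting square expressing that $\crta\Ev$ is a natural transformation and that $\alpha$ intertwines the two instances of the adjunction.

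Concretely, the key step is the following diagram, built from \equref{functor crta ev} for $\M$ and the trivial instance for $\C$: the square
\begin{equation*}
\scalebox{0.88}{\bfig
\putmorphism(0,500)(1,0)[\Fun(\M,\C)_{\C}\Del_{\C}\M`\C`\crta\Ev_{\M}]{1700}1a
\putmorphism(0,0)(1,0)[\Fun(\C,\C)_{\C}\Del_{\C}\C`\C`\crta\Ev_{\C}]{1700}1a
\putmorphism(0,500)(0,-1)[``(-\comp\alpha^{-1})\Del_{\C}\alpha]{480}1l
\putmorphism(1700,500)(0,-1)[``=]{480}1r
\efig}
\end{equation*}
commutes, since $\crta\Ev_{\C}$ is essentially evaluation at $I$ composed with nothing and $\alpha$ is a $\C$-bimodule equivalence; this is precisely the statement that the counit is natural along $\alpha$. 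Precomposing the top row with $\sigma_{\M,\C}\Del_{\C}\M$ turns $\crta\Ev_{\M}$ into $\crta\ev$, precomposing the bottom with $\sigma_{\C,\C}\Del_{\C}\C$ turns $\crta\Ev_{\C}$ into (a trivial version of) $\crta\ev$, and the left vertical map becomes exactly $({}^{op}\alpha)^{-1}\Del_{\C}\alpha$ once we substitute $({}^{op}\alpha)^{-1}=\sigma_{\C,\C}^{-1}\comp(\alpha^{-1})^*\comp\sigma_{\M,\C}$; tracing $N\Del_{\C}M$ around both ways gives $\crta\ev(N\Del_{\C}M)=\o\Hom_{\M}(M,N)$ on the nose, and since $\crta\ev_{\C}$ is the identity-type functor on $\C\Del_{\C}\C\simeq\C$, the other route yields $\bigl(({}^{op}\alpha)^{-1}\Del_{\C}\alpha\bigr)(N\Del_{\C}M)$. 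Finally I invoke \leref{basic lema}(1) (or its variant (2) with $\HH=\alpha$): having matched the two right $\C$-linear functors on all generators $N\Del_{\C}M$, and both being genuine functors out of $({}^{op}\M)\Del_{\C}\M$, they coincide.

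The main obstacle I anticipate is purely bookkeeping: keeping straight which $\sigma$ (i.e.\ $\sigma_{\C,\M}$ versus $\sigma_{\M,\C}$) appears where, and checking that the left vertical arrow in the square above really does simplify to $({}^{op}\alpha)^{-1}\Del_{\C}\alpha$ after the substitutions — this hinges on the identity $(\F^*)^{-1}=(\F^{-1})^*$ noted before the lemma and on $\alpha$ being an equivalence, not merely a functor. There is no deep difficulty: once the diagram is drawn with the correct instances of $\sigma$, commutativity is a formal consequence of naturality of the adjunction counit $\crta\Ev$ and the fact that $\crta\ev$ is, by definition \equref{functor crta ev}, just $\crta\Ev$ transported along $\sigma\Del_{\C}\M$. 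The exactness hypothesis on $\M$ is used only to guarantee that $\sigma_{\M,\C}$ is the honest equivalence supplied by \leref{sigma functor} and that checking equality of functors on generators of $({}^{op}\M)\Del_{\C}\M$ suffices.
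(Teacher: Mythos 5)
Your proposal is correct and follows essentially the same route as the paper: both unwind $({}^{op}\alpha)^{-1}=\sigma_{\C,\C}^{-1}\comp(\alpha^{-1})^*\comp\sigma_{\C,\M}$ and $\crta\ev$ from \equref{functor crta ev}, and reduce the claim to compatibility of evaluation with precomposition by $\alpha^{-1}$ via the right $\C$-linearity of $\sigma$ and of $\alpha^{-1}$; the paper writes this as an explicit chain of equalities ending with the adjunction isomorphism $\o\Hom_{\M}(N\crta\ot -,M)\iso\o\Hom_{\C}(-,\o\Hom_{\M}(N,M))$, which is the one computation your ``identity-type functor on $\C\Del_{\C}\C$'' remark leaves implicit. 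Note only the small slip of direction: since $\alpha:\M\to\C$, the functor $({}^{op}\alpha)^{-1}={}^{op}(\alpha^{-1})$ goes ${}^{op}\M\to{}^{op}\C$, not the other way around.
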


\begin{proof}
Take $M\in {}^{op}\M$ and $N\in\M$. We have that $\sigma_{\C, \M}(M)=\o\Hom_{\M}(-, M)$ is a right $\C$-linear functor $\M\to \C$.
Observe that $({}^{op}\alpha)^{-1}(M)=\sigma_{\C, \C}^{-1}\comp(\alpha^{-1})^*\comp\sigma_{\C, \M}(M)=
\sigma_{\C, \C}^{-1}(\o\Hom_{\M}(\alpha^{-1}(-), M))$. Then:
\begin{eqnarray*}
(({}^{op}\alpha)^{-1} \Del_{\C}\alpha)(M\Del_{\C}N)&=&
({}^{op}\alpha)^{-1} (M)\Del_{\C} \alpha(N)\\
&=& \sigma_{\C, \C}^{-1}(\o\Hom_{\M}(\alpha^{-1}(-), M)) \Del_{\C} \alpha(N)\\
&=& \sigma_{\C, \C}^{-1}(\o\Hom_{\M}(\alpha^{-1}(-), M)\crta\ot \alpha(N)) \\
&\stackrel{\equref{rright}}{=}& \sigma_{\C, \C}^{-1}(\o\Hom_{\M}(\alpha^{-1}(\alpha(N)\ot -), M))\\
&=& \sigma_{\C, \C}^{-1}(\o\Hom_{\M}(N\crta\ot -, M)).
\end{eqnarray*}
The third equality holds since $\sigma$ is right $\C$-linear and in the last equality we used the fact that
$\alpha^{-1}$ is right $\C$-linear: 
$\o\Hom_{\M}(\alpha^{-1}(\alpha(N)\ot X), M)=\o\Hom_{\M}(\alpha^{-1}(\alpha(N))\crta\ot X, M)=\o\Hom_{\M}(N\crta\ot X, M)$.
On the other hand, we have: $\crta\ev(M\Del_{\C}N)=\o\Hom_{\M}(N,M)$ and $\sigma_{\C, \C}(\crta\ev(M\Del_{\C}N) )=\o\Hom_{\C}(-,\o\Hom_{\M}(N,M) )$.
We should prove that $\o\Hom_{\M}(N\crta\ot -, M)\iso\o\Hom_{\C}(-,\o\Hom_{\M}(N,M) )$. For arbitrary $X, Y\in\C$ we find:
$$\begin{array}{rl}
\Hom_{\C}(X, \o\Hom_{\C}(Y, \hskip-1em&\hspace{-0,2cm} \o\Hom_{\M}(N,M)) =\Hom_{\C}(Y\ot X, \o\Hom_{\M}(N,M))\\
&=\Hom_{\M}(N\crta\ot (Y\ot X), M)=\Hom_{\M}((N\crta\ot Y)\crta\ot X, M)\\
&=\Hom_{\C}(X, \o\Hom_{\M}(N\crta\ot Y, M))
\end{array}$$
and the first claim follows. Note that $\crta\ev: {}^{op}\M\Del_{\C}\M\to\C$, while $({}^{op}\alpha)^{-1} \Del_{\C}\alpha: {}^{op}\M\Del_{\C}\M\to\C^{op}\Del_{\C}\C$,
this is why the equality is of right $\C$-module functors.
\qed\end{proof}

\subsection{Dual objects for one-sided bimodules over symmetric finite tensor categories} \sslabel{Picard category}

Let $\C$ be a braided finite tensor category. Exact invertible one-sided $\C$-bimodule categories, $\C$-bimodule equivalences and $\C$-bimodule natural 
isomorphisms form a 2-category and indeed a 2-groupoid. They were studied in \cite[Section 4.4]{ENO}, \cite[Section 2.8]{DN}. However, similarly as in \seref{braided}, 
truncating this 2-category we obtain a category (indeed a 2-group) $\dul\Pic(\C)$. Its objects are exact invertible one-sided $\C$-bimodule categories 
and morphisms are isomorphism classes of $\C$-bimodule equivalences. 
From \prref{sim bimod} it follows that when $\C$ is symmetric, so is $\dul\Pic(\C)$ (as a monoidal subcategory of $(\C^{br}\x\Mod, \Del_{\C}, \C)$).

For one-sided $\C$-bimodules we have: $\Fun_{\C}(\M, \C)=\Fun(\M, \C)_{\C}$, their $\C$-module structures \equref{left}, \equref{rright}, \equref{left-left}
and \equref{right-left} coincide. Then ${}^{op}\M=\M^{op}$. We also have that the functors $\u\Hom_{\M}(M, -)$ and $\o\Hom_{\M}(M, -)$ coincide. (Namely,
they are right adjoint functors to $-\crta\ot M$ and $M\crta\ot -$ from $\C$ to $\M$, respectively, for every $M\in\M$,
and the latter are equal as functors in $\dul\Pic(\C)$. Then $\u\Hom_{\M}(M, -)$ and $\o\Hom_{\M}(M, -)$ are equal for every $M\in\M$,
hence the bifunctors $\u\Hom_{\M}(-, -)$ and $\o\Hom_{\M}(-, -)$ are equal, in particular so are $\u\Hom_{\M}(-, M)$ and $\o\Hom_{\M}(-,M)$.)

If $\C$ is symmetric, we then have: $\sigma\comp\tau=\theta$, where $\tau$ is from \equref{flip tau}, and also: $\ev\comp\tau=\crta\ev$ and
$\tau\comp\coev=\crta\coev$. Consequently, the left and the
right dual object are isomorphic in this setting.

\medskip

Recall from \equref{functor ev} that $\ev: \M\Del_{\C}\M^{op}\to\C$ is given by $\ev(M\Del_{\C}N)=\u\Hom_{\M}(M,N)$ for $M\Del_{\C}N\in\M\Del_{\C}\N$
and set $\coev(I)=\oplus_{i\in J} W_i\Del_{\C}V_i\in\M^{op}\Del_{\C} \M$. 
Then $\ev\comp\tau\comp\coev(I)=\oplus_{i\in J}\u\Hom_{\M}(V_i, W_i)$.

\begin{cor} 
For a symmetric finite tensor category $\C$ and $\M\in\dul{\Pic}(\C)$, it is $\ev^{-1}
\simeq\coev$. In particular, it is $\oplus_{i\in J}\u\Hom_{\M}(V_i, W_i)\iso I$.
\end{cor}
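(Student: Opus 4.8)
The plan is to derive the statement from facts already in place. Since $\M$ is invertible, \prref{left dual} makes $\ev:\M\Del_\C\M^{op}\to\C$ a $\C$-bimodule equivalence, so it admits a quasi-inverse $\ev^{-1}$; and the paragraph preceding the corollary records $\crta\ev=\ev\comp\tau$, $\crta\coev=\tau\comp\coev$, together with the coincidence of the left and the right dual structure of a one-sided bimodule category over a symmetric $\C$. The latter already yields $\tau\comp\coev=\crta\coev\simeq\coev$, which is one of the asserted isomorphisms, so it remains to identify $\tau\comp\coev$ with $\ev^{-1}$; as $\ev$ is an equivalence, this is the same as exhibiting a natural isomorphism $\ev\comp\tau\comp\coev\simeq\Id_\C$.

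To construct it I would argue as in the proofs of \prref{left dual} and \prref{ev-db-equiv}. Using the recorded relations, rewrite $\ev\comp\tau\comp\coev$ as $\ev\comp\crta\coev$ (equivalently, as $\crta\ev\comp\coev$). Tensoring on the right by $\Id_\M$ and invoking the first part of \leref{basic lema} --- legitimate because $\M$ is invertible --- it suffices to produce an isomorphism $(\ev\Del_\C\M)(\crta\coev\Del_\C\M)\simeq\Id_\M$. This composite is a ``mixed'' zigzag, joining the right-dual coevaluation $\crta\coev$ of \prref{ev-db-equiv} to the left-dual evaluation $\ev$ of \prref{left dual}. I would substitute $\crta\coev=\tau\comp\coev$, insert the associativity and symmetry coherence cells of the symmetric monoidal category $(\C^{br}\x\dul\Mod,\Del_\C,\C)$ of \prref{sim bimod}, and slide the braiding $\tau$ through using its naturality and hexagon identities, so as to recast the composite as the genuine snake $(\ev\Del_\C\M)(\M\Del_\C\coev)$, which is $\simeq\Id_\M$ by \prref{left dual}. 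As in those earlier proofs, the closing step appeals to the universal property of the evaluation, which is the counit of the adjunction recalled in \rmref{invertible-equiv}. Hence $\ev^{-1}=\tau\comp\coev\simeq\coev$.

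The ``in particular'' clause is then immediate: evaluating the natural isomorphism $\ev\comp\tau\comp\coev\simeq\Id_\C$ at the unit object $I$ and inserting the identity $\ev\comp\tau\comp\coev(I)=\oplus_{i\in J}\u\Hom_\M(V_i,W_i)$ recorded just above the corollary gives $\oplus_{i\in J}\u\Hom_\M(V_i,W_i)\iso\Id_\C(I)=I$. I expect the only real work to be the ``mixed zigzag'' step --- organising the coherences of $\C^{br}\x\dul\Mod$ so that $(\ev\Del_\C\M)(\crta\coev\Del_\C\M)$ is visibly the standard zigzag; once that diagram is drawn, nothing remains to compute.
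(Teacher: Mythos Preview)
Your proposal is correct and uses the same three ingredients as the paper's argument: the zigzag identity from \prref{left dual}, the symmetry $\tau$ of $(\C^{br}\x\dul\Mod,\Del_\C,\C)$, and cancellation via \leref{basic lema},~1). The paper's route is a bit more direct: rather than composing to $\ev\comp\tau\comp\coev$ and then tensoring with $\M$, it starts from $(\ev\Del_\C\M)(\M\Del_\C\coev)\simeq\Id_\M$, inverts the equivalence $\ev$ on one side to obtain $\M\Del_\C\coev\simeq\ev^{-1}\Del_\C\M$, and then applies $\tau$ to swap $\M$ to the right, after which \leref{basic lema},~1) gives $\tau\comp\coev\simeq\ev^{-1}$ immediately. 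Your ``mixed zigzag'' manipulation --- sliding $\tau$ through $(\ev\Del_\C\M)((\tau\comp\coev)\Del_\C\M)$ to recover the standard snake --- is precisely what the paper abbreviates as ``compose with the isomorphism $\tau$''; the appeal to the universal property of the evaluation that you anticipate is not needed here.
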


\begin{proof}
By \prref{left dual} the functors $\ev$ and $\coev$ are equivalences. The identity $(\ev\Del_{\C}\M)(\M\Del_{\C}\coev)\simeq\id_{\M}$
yields $\M\Del_{\C}\coev\simeq\ev^{-1}\Del_{\C}\M$. Compose this with the isomorphism $\tau$ 
and apply \leref{basic lema}, 1) to get the claim.
\qed\end{proof}

\section{Amitsur cohomology over symmetric finite tensor categories} \selabel{Amitsur coh}

Amitsur cohomology was first introduced in \cite{Am} for commutative algebras over fields, it can be viewed as an
affine version of \v Cech cohomology. It was further developed in \cite{CR,KO1}. For more details see \cite{Cae}.
We construct an analogous cohomology for symmetric finite tensor categories.
\bigskip


Given a tensor functor $\eta: \C\to\E$, then $\E$ is a left (and similarly a right) $\C$-module category.
The action bifunctor $\C\times\E\to\E$ is given by $\ot(\eta\times\Id_{\E})$, where $\ot$ is the tensor product in $\E$, and the associator functor is
\begin{equation} \eqlabel{associator}
m_{X,Y,F}=\alpha_{\eta(X), \eta(Y), E}(\xi_{X,Y}\ot E)
\end{equation}
for every $X, Y\in\C$ and $E\in\E$, where $\xi_{X,Y}: \eta(X\ot Y)\to\eta(X)\ot\eta(Y)$
determines the monoidal structure of the functor $\eta$ and $\alpha$ is the associativity constraint for $\E$. The constraint for the action of the unit
is defined in the obvious manner. Moreover, $\E$ is a $\C\x\E$-bimodule category with the bimodule constraint
$\gamma_{X, E, F}: (X\crta\ot E)\ot F\to X\crta\ot (E\ot F)$ for $X\in\C, E,F\in\E$, given via $\gamma_{X, E, F}=\alpha_{\eta(X), E,F}$.

\medskip

\begin{defnlma}
Let $\F: \D\to\C$ be a tensor functor.
\begin{itemize}
\item
Given another tensor functor $\G: \D'\to\C$ and a $\C$-bimodule category $\M$, the category ${}_{\F}\M_{\G}$ equal to $\M$ as an abelian category with actions:
$$X\crta\ot M\crta\ot X'=\F(X)\crta\ot M\crta\ot \G(X')$$
for all $X\in\D, X'\in\D'$ and $M\in\M$ is a $\D\x\D'$-bimodule category.
\item
If $\HH: \C\to\E$ is another tensor functor and $\M$ is a left $\E$-module category, then there is an obvious equivalence of left $\D$-modules categories:
\begin{equation}\eqlabel{functor change}
{}_{\F}\C\Del_{\C}{}_{\HH}\M\simeq {}_{\HH\F}\M.
\end{equation}
\end{itemize}
\end{defnlma}

\begin{proof}
For the second part, we have that the functor $\beta: {}_{\F}\C\Del_{\C}{}_{\HH}\M\to {}_{\HH\F}\M$, given by $C\Del_{\C}M\mapsto \HH(C)\crta\ot M$,
is $\C$-balanced with the natural isomorphism $b_{C,X,M}=m_{C,X,M}=\alpha_{\HH(C), \HH(X), M}(\xi_{X,Y}\crta\ot M)$,
similarly as in \equref{associator}. The coherence \equref{C-balanced} for this balance is precisely the coherence for $\M$ to be
a left $\C$-module category (through $\HH$).
\qed\end{proof}

\medskip

Fix $\C$ a finite tensor category. In order to simplify the notation we will often write: $X_1\cdots X_n$ for the tensor product $X_1\ot \dots\ot X_n$ in $\C$.
An object $X\in \C^{\Del n}$ we will write as $X=X^1\Del X^2\Del \cdots \Del X^n$, where the direct summation is understood implicitly.
Recall that the category $\C^{\Del n}$ is a finite tensor category with unit object $I^{\Del n}$ and the componentwise tensor product: $(X^1\Del X^2\Del \cdots \Del X^n)\odot
(Y^1\Del Y^2\Del \cdots \Del Y^n)=X_1Y_1\Del\cdots X_nY_n$.

\medskip

We consider the functors $e^n_i: \C^{\Del n}\to\C^{\Del (n+1)}$ with $i=1,\cdots, n+1$, given by
\begin{equation} \eqlabel{e's}
e_i^n(X^1\Del\cdots\Del X^n) = X^1\Del\cdots \Del I\Del X^i\Del\cdots\Del X^n
\end{equation}
They are clearly tensor functors. We have:

\begin{lma} \lelabel{e_i's rel}
For $i\geq j\in \{1,\cdots,n+1\}$ it is
\begin{equation}\eqlabel{e functors}
e_j^{n+1}\circ e_i^n=e_{i+1}^{n+1}\circ e_j^n.
\end{equation}
\end{lma}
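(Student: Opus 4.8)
The statement is a cosimplicial identity, the $n$-th stage of the standard simplicial identities for the cofaces $e_i$ on the cosimplicial object $\C^{\Del\bullet}$. The natural approach is a direct verification by evaluating both composites on a generic object $X = X^1\Del\cdots\Del X^n$ of $\C^{\Del n}$ and checking that the two resulting objects of $\C^{\Del(n+2)}$ agree slot by slot. First I would recall, from \equref{e's}, that $e_i^n$ inserts a copy of the unit $I$ in position $i$ (shifting the entries $X^i,\dots,X^n$ one place to the right). So $e_i^n(X)$ has $I$ in slot $i$, and applying $e_j^{n+1}$ with $j\le i$ inserts a further $I$ in slot $j$ (to the left of the first inserted $I$), pushing the earlier $I$ to slot $i+1$; the result is $X^1\Del\cdots\Del X^{j-1}\Del I\Del X^j\Del\cdots\Del X^{i-1}\Del I\Del X^i\Del\cdots\Del X^n$, with $I$'s in slots $j$ and $i+1$. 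For the other composite, $e_j^n(X)$ has $I$ in slot $j$, and applying $e_{i+1}^{n+1}$ inserts $I$ in slot $i+1$ (to the right of the first $I$, since $i+1>j$); the result has $I$'s in slots $j$ and $i+1$ and the original entries arranged identically. Thus both sides produce the same object of $\C^{\Del(n+2)}$, which is the content of \equref{e functors} on objects; the condition $i\ge j$ is exactly what guarantees the two inserted units land in positions $j$ and $i+1$ in both orders.

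Next I would check the monoidal structure. Each $e_i^n$ is a strict tensor functor (insertion of the unit commutes strictly with the componentwise tensor product $\odot$ up to the canonical unit isomorphisms of $\C$), so the monoidal constraint $\xi$ of a composite $e_j^{n+1}\circ e_i^n$ is built out of the unit constraints $I\ot I\iso I$ of $\C$; since these appear in the same positions for both composites by the slot bookkeeping above, the two monoidal structures coincide. I would also note naturality: on a morphism $f^1\Del\cdots\Del f^n$ both composites send it to the same morphism with identities of $I$ inserted in slots $j$ and $i+1$.

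Finally, one should remark that this also handles the degenerate edge cases $j = i$ (both sides insert two units in adjacent slots $i$ and $i+1$, the two orders agreeing) and the extreme values $j=1$ or $i=n+1$, which are covered by the same formula. The only mildly delicate point—the "main obstacle," though it is really just bookkeeping—is keeping the index shift straight: after the first coface is applied, the second coface index on the right-hand side is $i+1$ rather than $i$ precisely because the first insertion on the left side (in slot $i$) has shifted the slots, and on the right side the first insertion is in slot $j\le i$ so the entry originally in slot $i$ now sits in slot $i+1$. Once this shift is tracked correctly, the identity \equref{e functors} is immediate, and no coherence computation beyond the strictness of the $e_i$ is needed.
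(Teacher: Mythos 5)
Your proof is correct, and it coincides with what the paper intends: the paper states \leref{e_i's rel} without proof, treating it as the standard cosimplicial identity, and your slot-by-slot verification (including the index shift explaining why the right-hand side uses $e_{i+1}^{n+1}$, and the remarks on the case $j=i$ and on the strict monoidality of the $e_i^n$) is exactly the direct argument being left implicit.
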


\bigskip

Let $P$ be an additive covariant functor from a full subcategory of the category of (symmetric) tensor categories that contains all Deligne tensor
powers $\C^{\Del n}$ of $\C$ to abelian groups. We define $\C^{\Del 0}=k$. Then we consider
$$\delta_n=\sum_{i=1}^{n+1} (-1)^{i-1}P(e^n_i):\ P(\C^{\Del n})\to P(\C^{\Del (n+1)}).$$
It is straightforward to show, using \leref{e_i's rel}, that $\delta_{n+1}\circ \delta_n=0$, so we obtain a complex:
$$ \bfig \putmorphism(0, 0)(1, 0)[0`P(\C)`]{420}1a
\putmorphism(400, 0)(1, 0)[\phantom{P(S)}`P(\C^{\Del 2})`\delta_1]{560}1a
\putmorphism(960, 0)(1, 0)[\phantom{P(S^{\ot 2})}` P(\C^{\Del 3})`\delta_2]{600}1a
\putmorphism(1550, 0)(1, 0)[\phantom{P(S^{\ot 3})}`\cdots`\delta_3]{550}1a
\efig
$$
We will call it {\em Amitsur complex $C(\C/vec, P)$}. The notation $\C/vec$ expresses that the relative Deligne products of the copies of 
the category $\C$ are taken over the category $vec$. In all the cohomology groups that we will define below we will adopt the notation: 
$H^n(\C, F)=H^n(\C/vec, F)$ for a suitable functor $F$ and every $n=0,1,\dots$. We have:
$$Z^n(\C, P)=\Ker\delta_n,~~B^n(\C, P)=\im\delta_{n-1}~~~\textnormal{and}~~~H^n(\C, P)=Z^n(\C, P)/B^n(\C, P).$$
We will call $H^n(\C, P)$ the $n$-th Amitsur cohomology group of $\C$ with values in $P$. Elements in $Z^n(\C,P)$ are called $n$-cocycles, and elements
in $B^n(\C,P)$ are called $n$-coboundaries.

\medskip

We will consider the cases: $P=\Pic$, where $\Pic(\C)$ is the Picard group of a braided monoidal category $\C$, consisting of equivalence classes of exact invertible
one-sided $\C$-bimodule categories (\cite[Section 4.4]{ENO}, \cite[Section 2.8]{DN}), and $P=\Inv$, where $\Inv(\C)$ is the group of isomorphism classes 
of invertible objects of $\C$, which we define below. Since we need $P$ to map to abelian groups, we need to work only with symmetric categories $\C$.

\medskip

An object $X\in\C$ is called
{\em invertible} if there exists an object $Y\in\C$ such that $X\ot Y\iso I\iso Y\ot X$. For such an object $Y$ we will write $Y=X^{-1}$.
Isomorphism classes of invertible objects in $\C$ form a group $\Inv(\C)$ with the product induced by the tensor product in $\C$. If
$\C$ is braided $\Inv(\C)$ is an abelian group (in two ways). An isomorphism class of an object $X$ we will denote by $\crta X$. 

\medskip

\begin{lma} \lelabel{Del braided}
For a braided (symmetric) finite tensor category $\C$ the category $\C^{\Del n}$ is a braided (symmetric) finite tensor category for every $n\in\mathbb N$.
\end{lma}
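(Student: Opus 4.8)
The plan is to reduce everything to \rmref{tensor product copies of C} together with an induction on $n$. The base case $n=1$ is trivial, and $n=2$ is exactly the discussion preceding \rmref{tensor product copies of C}: if $\C$ carries a braiding $\Phi^{\C}$ (resp.\ a symmetry) then $\C\Del\C$ is braided (resp.\ symmetric) via $\tilde\Phi_{X\Del Y, X'\Del Y'}=\Phi^{\C}_{X,X'}\Del\Phi^{\C}_{Y,Y'}$ as in \equref{braid two}, with $\D=\C$. So the only real work is the inductive step.

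First I would record the identification $\C^{\Del n}\iso\C^{\Del(n-1)}\Del\C$ as finite tensor categories (associativity and coherence of the Deligne tensor product, \cite[Proposition 1.46.2]{EGNO} and the surrounding material), noting that the componentwise tensor product on $\C^{\Del n}$ corresponds to the componentwise tensor product $\odot$ on the two-factor product $\C^{\Del(n-1)}\Del\C$. Then, assuming inductively that $\C^{\Del(n-1)}$ is a braided (resp.\ symmetric) finite tensor category — call its braiding $\Phi^{(n-1)}$ — I apply the construction of \equref{braid two} with $\C$ replaced by $\C^{\Del(n-1)}$ and $\D$ replaced by $\C$: this equips $\C^{\Del(n-1)}\Del\C$, hence $\C^{\Del n}$, with the braiding
\begin{equation*}
\Phi^{(n)}_{(X\Del Y),(X'\Del Y')} = \Phi^{(n-1)}_{X,X'}\Del \Phi^{\C}_{Y,Y'},
\end{equation*}
where now $X,X'\in\C^{\Del(n-1)}$ and $Y,Y'\in\C$. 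Concretely this is just the componentwise braiding $\Phi^{\C}_{X^1,Y^1}\Del\cdots\Del\Phi^{\C}_{X^n,Y^n}$ on $\C^{\Del n}$. The hexagon axioms for $\Phi^{(n)}$ follow componentwise from those for $\Phi^{(n-1)}$ and $\Phi^{\C}$ (this is the content already asserted implicitly for the two-factor case), and if both $\Phi^{(n-1)}$ and $\Phi^{\C}$ are symmetric then so is $\Phi^{(n)}$, since $\Phi^{(n)}_{-,-}\Phi^{(n)}_{-,-}=\Id$ is checked in each Deligne factor separately. That $\C^{\Del n}$ remains a \emph{finite} tensor category is immediate from the fact that a Deligne product of finite (tensor) categories is again finite (tensor), with simple unit $I^{\Del n}$.

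The only point requiring a little care — and the place I would expect to spend the most effort — is verifying that the braiding $\Phi^{(n)}$ produced this way is genuinely well defined on $\C^{\Del n}$ and not merely on the bracketed product $\C^{\Del(n-1)}\Del\C$, i.e.\ that it is compatible with the associativity equivalences of the Deligne product under which $(\C^{\Del(n-1)}\Del\C)\iso\C^{\Del n}$. Since the Deligne tensor product is associative up to a canonical equivalence that is compatible with the universal balanced bifunctors (cf.\ the use of \leref{Weq} and \rmref{tensor product copies of C} in the excerpt), transporting the braiding along this equivalence is routine coherence bookkeeping; one checks that the transported structure is exactly the componentwise braiding above, which is manifestly independent of the chosen bracketing. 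I would state this compatibility, cite the relevant coherence for Deligne products, and leave the diagram chase to the reader, exactly in the spirit of the other ``the check is easy'' passages in this paper.
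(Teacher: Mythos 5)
Your proposal is correct, and it takes the same route the paper implicitly intends: the paper states this lemma without proof, treating it as an immediate consequence of the two-factor discussion in Subsection 2.1 (the componentwise tensor product and the braiding of \equref{braid two}), which is exactly what your induction on $n$ via $\C^{\Del n}\iso\C^{\Del(n-1)}\Del\C$ formalizes. Your extra care about independence of the bracketing under the canonical associativity equivalences of the Deligne product is a reasonable addition and does not change the substance.
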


If $\C$ is braided, every left $\C^{\Del n}$-module category is a $\C^{\Del n}$-bimodule category so we may consider the Picard group $\Pic(\C^{\Del n})$.
If $\C$ is symmetric by \prref{sim bimod} the monoidal category $((\C^{\Del n})^{br}\x\Bimod, \Del_{\C^{\Del n}}, \C^{\Del n})$ is symmetric and we have:

\begin{cor} \colabel{Pic abelian}
For a symmetric finite tensor category $\C$ the Picard group $\Pic(\C^{\Del n})$ is abelian for every $n\in\mathbb N$.
\end{cor}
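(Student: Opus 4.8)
The plan is to invoke the two structural results already established in the paper and observe that the corollary is essentially a one-line consequence. First I would note that by \leref{Del braided}, if $\C$ is a symmetric finite tensor category, then $\C^{\Del n}$ is again a symmetric finite tensor category for every $n\in\mathbb N$; in particular it is a \emph{braided} finite tensor category, so the notion $\Pic(\C^{\Del n})$ makes sense as the group of equivalence classes of exact invertible one-sided $\C^{\Del n}$-bimodule categories.

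Next I would apply \prref{sim bimod} with the symmetric finite tensor category $\C^{\Del n}$ in place of $\C$. That proposition states that for a symmetric finite tensor category $\D$ the monoidal category $(\D^{br}\x\dul\Mod, \Del_{\D}, \D)$ is symmetric monoidal. Taking $\D=\C^{\Del n}$, we conclude that $((\C^{\Del n})^{br}\x\dul\Mod, \Del_{\C^{\Del n}}, \C^{\Del n})$ is symmetric monoidal.

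Finally I would extract the abelianness of the Picard group from this symmetry. The group $\Pic(\C^{\Del n})$ is the group of isomorphism classes of invertible objects in the monoidal category $((\C^{\Del n})^{br}\x\dul\Mod, \Del_{\C^{\Del n}}, \C^{\Del n})$: an object is invertible in this monoidal category precisely when it is an exact invertible one-sided $\C^{\Del n}$-bimodule category (this is the identification already used throughout the paper, cf.\ \prref{ev-db-equiv} and the surrounding discussion on invertible bimodule categories, whose dual is the opposite category). In any symmetric monoidal category the isomorphism classes of invertible objects form an abelian group, since the braiding $\sigma_{X,Y}\colon X\Del_{\C^{\Del n}}Y\xrightarrow{\sim}Y\Del_{\C^{\Del n}}X$ furnishes, for invertible $X,Y$, an isomorphism between the two orders of the product. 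Hence $\Pic(\C^{\Del n})$ is abelian.

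There is no real obstacle here: the only point requiring a word of care is that the symmetry provided by \prref{sim bimod} really does restrict to the subcategory of \emph{invertible} objects and thereby descends to the group operation on $\Pic(\C^{\Del n})$ — but this is immediate, since the braiding isomorphism $\tau$ constructed in the proof of \prref{sim bimod} (see \equref{flip tau}) is natural and monoidal, so it transports invertibility and respects equivalence classes. Thus the proof amounts to chaining \leref{Del braided}, \prref{sim bimod}, and the elementary fact about invertible objects in symmetric monoidal categories.
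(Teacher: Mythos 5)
Your proposal is correct and follows exactly the route the paper intends: \leref{Del braided} to see that $\C^{\Del n}$ is again symmetric, \prref{sim bimod} applied to $\C^{\Del n}$ to get the symmetric monoidal structure on one-sided bimodule categories, and the standard fact that isomorphism classes of invertible objects in a symmetric monoidal category form an abelian group. The extra sentence you add about the braiding $\tau$ restricting to invertible objects is a harmless elaboration of what the paper leaves implicit.
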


\subsection{The Picard category of a symmetric finite tensor category and the Amitsur cohomology} \sslabel{deltaPic}

{\em Throughout this subsection let $\C$ be a symmetric finite tensor category}. Then $\C^{\Del n}$ is a symmetric finite tensor category and the category $\dul\Pic(\C^{\Del n})$
is symmetric monoidal with the tensor product $\Del_{\C^{\Del n}}$. The objects of $\dul\Pic(\C^{\Del n})$ are exact invertible one-sided $\C^{\Del n}$-bimodule
categories $\M$ so that there are equivalence functors $\M\Del_{\C^{\Del n}}\M^{op}\simeq \C^{\Del n}$ (and $\M^{op}\Del_{\C^{\Del n}}\M\simeq\C^{\Del n}$).

\medskip

Let us consider the functors $E^n_i: \dul\Pic(\C^{\Del n})\to\dul\Pic(\C^{\Del (n+1)})$ for $i=1,\cdots,n+1$ given by
\begin{equation} \eqlabel{E_i functors}
E^n_i(\M)=\M_i=\M\Del_{\C^{\Del n}} {}_{e^n_i}\C^{\Del (n+1)}
\end{equation}
and
$$E^n_i(F)=F_i=F\Del_{\C^{\Del n}} {}_{e^n_i}\C^{\Del (n+1)}$$
for every object $\M$ and every functor $F$ in $\dul\Pic(\C^{\Del n})$, with $e^n_i$'s from \equref{e's}.

\begin{lma} \lelabel{M_ij}
For $i\geq j\in \{1,\dots,n+1\}$ and $\M\in \dul{\Pic}(\C^{\Del n})$, we have a natural isomorphism:
\begin{equation}\eqlabel{2.2.1.1b}
\M_{ij}\cong \M_{j(i+1)}.
\end{equation}
Here $\M_{ij}=E_j^{n+1}\comp E_i^n(\M)$.
\end{lma}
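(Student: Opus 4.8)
The plan is to reduce the claimed isomorphism $\M_{ij}\cong\M_{j(i+1)}$ to the functor relation $e_j^{n+1}\circ e_i^n=e_{i+1}^{n+1}\circ e_j^n$ established in \leref{e_i's rel}, together with the associativity of the tensor product $\Del$ of bimodule categories and the ``functor change'' equivalence \equref{functor change}. First I would unwind the definitions: by \equref{E_i functors},
$$\M_{ij}=E_j^{n+1}(E_i^n(\M))=\bigl(\M\Del_{\C^{\Del n}}{}_{e_i^n}\C^{\Del(n+1)}\bigr)\Del_{\C^{\Del(n+1)}}{}_{e_j^{n+1}}\C^{\Del(n+2)}.$$
Using the canonical associativity equivalence for $\Del$ over bimodule categories (recalled in the Preliminaries, from \cite[Remark 3.6]{ENO}), this is equivalent to $\M\Del_{\C^{\Del n}}\bigl({}_{e_i^n}\C^{\Del(n+1)}\Del_{\C^{\Del(n+1)}}{}_{e_j^{n+1}}\C^{\Del(n+2)}\bigr)$.

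Next I would apply \equref{functor change} with $\F=e_i^n:\C^{\Del n}\to\C^{\Del(n+1)}$ and $\HH=e_j^{n+1}:\C^{\Del(n+1)}\to\C^{\Del(n+2)}$, which gives
$${}_{e_i^n}\C^{\Del(n+1)}\Del_{\C^{\Del(n+1)}}{}_{e_j^{n+1}}\C^{\Del(n+2)}\simeq {}_{e_j^{n+1}\circ e_i^n}\C^{\Del(n+2)}$$
as $\C^{\Del n}$-module categories (and one checks this is a bimodule equivalence, since all the relevant one-sided structures are induced via braidings and the functors are tensor functors). Hence $\M_{ij}\simeq\M\Del_{\C^{\Del n}}{}_{e_j^{n+1}\circ e_i^n}\C^{\Del(n+2)}$. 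Running the same computation for $\M_{j(i+1)}=E_{i+1}^{n+1}(E_j^n(\M))$ yields $\M_{j(i+1)}\simeq\M\Del_{\C^{\Del n}}{}_{e_{i+1}^{n+1}\circ e_j^n}\C^{\Del(n+2)}$. Now \leref{e_i's rel} says $e_j^{n+1}\circ e_i^n=e_{i+1}^{n+1}\circ e_j^n$ as functors for $i\ge j$, so the two bimodule categories ${}_{e_j^{n+1}\circ e_i^n}\C^{\Del(n+2)}$ and ${}_{e_{i+1}^{n+1}\circ e_j^n}\C^{\Del(n+2)}$ are literally equal, and composing the equivalences above gives the desired natural isomorphism $\M_{ij}\cong\M_{j(i+1)}$.

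The main obstacle I anticipate is not the equality of functors — that is handed to us by \leref{e_i's rel} — but rather checking that the chain of equivalences I invoke respects the one-sided $\C^{\Del n}$-bimodule structures (not merely the left module structures), and that \equref{functor change}, stated there only as an equivalence of left module categories, upgrades to a bimodule equivalence in our setting. Because $\C$ is symmetric, every one-sided module category carries its bimodule structure canonically via the braiding (this is \coref{symmetric} and the discussion around \equref{summ one sided bimod}), so the upgrade is automatic; nonetheless I would state explicitly that all equivalences in sight are $\C^{\Del n}$-bimodule equivalences. A secondary subtlety is naturality: the isomorphism must be natural in $\M$, but since every step (associativity of $\Del$, the functor-change equivalence, and the substitution via \leref{e_i's rel}) is natural in the first tensor factor, naturality is inherited. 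I would therefore organize the proof as: (1) expand $\M_{ij}$ and $\M_{j(i+1)}$ via \equref{E_i functors}; (2) reassociate using the associativity of $\Del_{\C^{\Del\bullet}}$; (3) collapse the iterated Deligne product of functor-twisted copies of $\C$ using \equref{functor change}; (4) invoke \leref{e_i's rel} to identify the resulting twisted categories; (5) note that all equivalences are bimodule equivalences and natural in $\M$.
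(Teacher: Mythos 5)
Your proposal is correct and follows essentially the same route as the paper: expand $\M_{ij}$ via \equref{E_i functors}, collapse the twisted copies of $\C^{\Del(n+2)}$ using the functor-change equivalence \equref{functor change} (together with associativity of $\Del$), substitute via $e_j^{n+1}\circ e_i^n=e_{i+1}^{n+1}\circ e_j^n$ from \leref{e_i's rel}, and reverse the functor-change step. Your additional remarks on upgrading to bimodule equivalences and on naturality in $\M$ are sensible refinements that the paper leaves implicit.
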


\begin{proof}
\begin{eqnarray*}
\M_{ij}&=&
(\M\Del_{\C^{\Del n}}{}_{e_i^n}\C^{\Del (n+1)})\Del_{\C^{\Del (n+1)}}{}_{e_j^{n+1}}{\C^{\Del (n+2)}}\\
&\congo{\equref{functor change}}&
\M\Del_{\C^{\Del n}}{}_{(e_j^{n+1}\circ e_i^n)}{\C^{\Del (n+2)}}
\equal{\equref{e functors}}
\M\Del_{\C^{\Del n}}{}_{(e_{i+1}^{n+1}\circ e_j^n)}{\C^{\Del (n+2)}}\\
&\congo{\equref{functor change}}&
(\M\Del_{\C^{\Del n}}{}_{e_j^n}\C^{\Del (n+1)})\Del_{\C^{\Del (n+1)}}{}_{e_{i+1}^{n+1}}{\C^{\Del (n+2)}}
=\M_{j(i+1)}.
\end{eqnarray*}
\qed\end{proof}


Now for every non-zero $n\in\mathbb N$, we define a functor
\begin{equation} \eqlabel{delta_n Pic}
\delta_{n}:\ \dul{\Pic}(\C^{\Del n})\to \dul{\Pic}(\C^{\Del (n+1)}),
\end{equation}
by
$$\delta_{n}(\M)=\M_1\Del_{\C^{\Del (n+1)}}\M^{op}_2\Del_{\C^{\Del (n+1)}}\cdots
\Del_{\C^{\Del (n+1)}}\N_{n+1},$$
$$\delta_{n}(F)=F_1\Del_{\C^{\Del (n+1)}} (F^{op}_2)^{-1}\Del_{\C^{\Del (n+1)}}\cdots \Del_{\C^{\Del (n+1)}} (G_{n+1})^{\pm 1},$$
with $\N=\M$ or $\M^{op}$ and $G=F$ or $F^{op}$ depending on whether $n$ is even or odd.
Up to the permutation of the factors in the relative Deligne tensor product - we use the fact that $\dul{\Pic}(\C^{\Del n})$ is symmetric - it is clear that the functor $\delta_n$ is monoidal.

\begin{rem}
Throughout we will use similar switch functor isomorphisms in the identities.
\end{rem}

Computations similar to the computations in the proof of the previous lemma show that:
\begin{equation}\eqlabel{double delta M}
\delta_{n+1}\delta_{n}(\M)\simeq(\boxtimes_{\C^{\Del (n+2)}})_{j=2}^{n+2}(\boxtimes_{\C^{\Del (n+2)}})_{i=1}^{j-1} \hspace{0,2cm} (\M_{ij}\Del_{\C^{\Del (n+2)}} \M_{ij}^{op}),
\end{equation}
\begin{equation}\eqlabel{double delta F}
\delta_{n+1}\delta_{n}(F)\simeq(\boxtimes_{\C^{\Del (n+2)}})_{j=2}^{n+2}(\boxtimes_{\C^{\Del (n+2)}})_{i=1}^{j-1} \hspace{0,2cm} (F_{ij}\Del_{\C^{\Del (n+2)}} (F_{ij}^{op})^{-1}),
\end{equation}
so we have a natural equivalence:
$$\lambda_{\M}= (\boxtimes_{\C^{\Del (n+2)}})_{j=2}^{n+2}(\boxtimes_{\C^{\Del (n+2)}})_{i=1}^{j-1} \ev_{\M_{ij}}:\ \delta_{n+1}\delta_n(\M)\to \C^{\Del (n+2)}.$$
(Here $\lambda_{\M}$ is an equivalence by \prref{ev-db-equiv}.) Similarly, one gets:
\begin{equation}\eqlabel{delta-lambda}
\delta_{n+2}(\lambda_{\M})\simeq\lambda_{\delta_n(\M)}.
\end{equation}

\begin{rem} \rmlabel{lambda mult}
For $\M, \N\in\dul\Pic(\C^{\Del n})$ it is: $\lambda_{\M\Del_{\C^{\Del n}}\N}\simeq\lambda_{\M}\Del_{\C^{\Del (n+2)}}\lambda_{\N}$, or which is the same:
\vspace{-0,3cm}
\begin{multline*}
(\boxtimes_{\C^{\Del (n+2)}})_{j=2}^{n+2}(\boxtimes_{\C^{\Del (n+2)}})_{i=1}^{j-1} \ev_{(\M\Del_{\C^{\Del n}}\N)_{ij}}\simeq\\
(\boxtimes_{\C^{\Del (n+2)}})_{j=2}^{n+2}(\boxtimes_{\C^{\Del (n+2)}})_{i=1}^{j-1} \ev_{\M_{ij}} \Del_{\C^{\Del (n+2)}}
(\boxtimes_{\C^{\Del (n+2)}})_{j=2}^{n+2}(\boxtimes_{\C^{\Del (n+2)}})_{i=1}^{j-1} \ev_{\N_{ij}}.
\end{multline*}
To see this it is equivalent to prove that $\ev_{\M\Del_{\C}\N} \simeq \ev_{\M} \Del_{\C} \ev_{\N}$ for $\M, \N\in\dul\Pic(\C)$. The functor
$\ev_{\M\Del_{\C}\N}$ is defined through the commutative diagram $\langle 3\rangle$ below:
$$\scalebox{0.8}{
\bfig \hspace{-0,4cm}
\putmorphism(2550,600)(-1,0)[`\M\Del_{\C}\N\Del_{\C}(\M\Del_{\C}\N)^{op}` \simeq]{800}{-1}a
\putmorphism(-200,600)(1,0)[\M\Del_{\C}\M^{op}\Del_{\C}\N\Del_{\C}\N^{op}` \M\Del_{\C}\N\Del_{\C}\N^{op}\Del_{\C}\M^{op} `
	\M\Del_{\C}\tau_{\M^{op},\N\Del_{\C}\N^{op}}]{2200}1a
\putmorphism(2400,600)(0,-1)[`` \M\Del_{\C}\ev_{\N}\Del_{\C}\M^{op}]{450}1l
\putmorphism(2980,540)(-2,-1)[`\M\Del_{\C}\C\Del_{\C}\M^{op}` ]{800}0a
\putmorphism(-560,600)(3,-1)[``\Id\Del_{\C}\ev_{\N}]{1280}1r
\putmorphism(-200,540)(2,-1)[`\M\Del_{\C}\M^{op}\Del_{\C}\C`]{780}0l
\putmorphism(920,140)(1,0)[`\M\Del_{\C}\M^{op}`\simeq]{470}1a
\putmorphism(1620,140)(1,0)[``\simeq]{220}1a
\putmorphism(-840,560)(2,-1)[``\ev_{\M}\Del_{\C}\ev_{\N}]{2160}1l  
\putmorphism(1340,130)(0,-1)[`\C`\ev_{\M}]{650}1r
\putmorphism(3650,580)(-2,-1)[``]{2220}1r
\putmorphism(3650,560)(-2,-1)[``\ev_{\M\Del_{\C}\N}]{2230}0r
\put(900,360){\fbox{1}}
\put(900,-160){\fbox{2}}
\put(1700,-160){\fbox{3}}
\efig}
$$
The diagram $\langle 1\rangle$ commutes by naturality of the braiding
$\tau$ in $(\C^{br}\x\Mod, \Del_{\C}, \C)$, and $\langle 2\rangle$ commutes obviously. Then the commutativity of the outer diagram yields:
$\ev_{\M} \Del_{\C} \ev_{\N}\simeq\ev_{\M\Del_{\C}\N}(\M\Del_{\C}\tau_{\M^{op},\N\Del_{\C}\N^{op}})$, or $\ev_{\M\Del_{\C}\N}\simeq\ev_{\M} \Del_{\C} \ev_{\N}$
up to the switch isomorphism functor.
\end{rem}

Observe that we also have:
\begin{equation}\eqlabel{delta delta ev}
\end{equation} \vspace{-1,3cm}
\begin{eqnarray*}
\delta_{n+1}\delta_{n}(\ev_{\M})&\stackrel{\equref{double delta F}}{\simeq}&
 (\boxtimes_{\C^{\Del (n+2)}})_{j=2}^{n+2}(\boxtimes_{\C^{\Del (n+2)}})_{i=1}^{j-1} (\ev_{\M_{ij}}\boxtimes_{\C^{\Del (n+2)}}\crta\ev_{\M_{ij}})\\
&\simeq& \left((\boxtimes_{\C^{\Del (n+2)}})_{j=2}^{n+2}(\boxtimes_{\C^{\Del (n+2)}})_{i=1}^{j-1} \ev_{\M_{ij}}\right) \boxtimes_{\C^{\Del (n+2)}}
  \left((\boxtimes_{\C^{\Del (n+2)}})_{j=2}^{n+2}(\boxtimes_{\C^{\Del (n+2)}})_{i=1}^{j-1} \crta\ev_{\M_{ij}}\right)\\
&=& \lambda_{\M} \Del_{\C^{\Del (n+2)}}\crta\lambda_{\M}
\end{eqnarray*}

For a consequence of \leref{alfa dagger}, 1), we have:

\begin{cor} \colabel{lambda alfa} \clabel{lambda alfa}
For any $\M\in\dul{\Pic}(\C^{\Del n})$ and a $\C^{\Del n}$-bimodule equivalence $\alpha: \M\to \C^{\Del n}$ it is $\lambda_{\M}\simeq\delta_{n+1}\delta_{n}(\alpha)$.
\end{cor}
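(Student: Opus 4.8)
The plan is to establish the asserted equality factor by factor over the pairs of indices appearing in the double Deligne product, with \leref{alfa dagger} supplying the content of each factor. First recall from \equref{double delta F} that
\[
\delta_{n+1}\delta_n(\alpha)=(\boxtimes_{\C^{\Del (n+2)}})_{j=2}^{n+2}(\boxtimes_{\C^{\Del (n+2)}})_{i=1}^{j-1}\bigl(\alpha_{ij}\Del_{\C^{\Del (n+2)}}(\alpha_{ij}^{op})^{-1}\bigr),
\]
where $\alpha_{ij}=E_j^{n+1}\comp E_i^n(\alpha)$ in the notation of \equref{E_i functors}, while by definition $\lambda_\M=(\boxtimes_{\C^{\Del (n+2)}})_{j=2}^{n+2}(\boxtimes_{\C^{\Del (n+2)}})_{i=1}^{j-1}\ev_{\M_{ij}}$. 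Since $\dul\Pic(\C^{\Del (n+2)})$ is symmetric monoidal by \prref{sim bimod}, it therefore suffices to prove, for each relevant pair $(i,j)$, the single-factor identity $\ev_{\M_{ij}}\simeq\alpha_{ij}\Del_{\C^{\Del (n+2)}}(\alpha_{ij}^{op})^{-1}$, up to the switch isomorphism $\tau$ of the symmetric category.

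To prove this single-factor identity I would first note that $\M_{ij}=E_j^{n+1}E_i^n(\M)\simeq\M\Del_{\C^{\Del n}}{}_{e_j^{n+1}\comp e_i^n}\C^{\Del (n+2)}$ by \equref{functor change}, so $\M_{ij}$ is an exact (being invertible) invertible one-sided $\C^{\Del (n+2)}$-bimodule category, and $\alpha_{ij}\colon\M_{ij}\to\C^{\Del (n+2)}$ is a $\C^{\Del (n+2)}$-bimodule equivalence. Applying \leref{alfa dagger} to $\alpha_{ij}$ gives $\crta\ev_{\M_{ij}}=({}^{op}\alpha_{ij})^{-1}\Del_{\C^{\Del (n+2)}}\alpha_{ij}$ as right $\C^{\Del (n+2)}$-linear functors. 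Now invoke the identifications available over the symmetric category $\C^{\Del (n+2)}$, collected in \ssref{Picard category} and \equref{summ one sided bimod}: every $\C^{\Del (n+2)}$-bimodule category is one-sided (\coref{symmetric}), whence ${}^{op}\M_{ij}=\M_{ij}^{op}$, ${}^{op}\alpha_{ij}=\alpha_{ij}^{op}$, and $\crta\ev_{\M_{ij}}=\ev_{\M_{ij}}\comp\tau$. Combining these, $\ev_{\M_{ij}}=({}^{op}\alpha_{ij})^{-1}\Del_{\C^{\Del (n+2)}}\alpha_{ij}\comp\tau^{-1}$, which up to the switch $\tau$ is exactly $\alpha_{ij}\Del_{\C^{\Del (n+2)}}(\alpha_{ij}^{op})^{-1}$.

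Finally I would tensor these per-pair identities over all the indices $1\le i<j\le n+2$; because $\dul\Pic(\C^{\Del (n+2)})$ is symmetric, the accumulated switch isomorphisms and any reordering of the factors are absorbed, yielding $\lambda_\M\simeq\delta_{n+1}\delta_n(\alpha)$. The main obstacle is the bookkeeping at the outset: one must verify that $\alpha_{ij}$, as it appears through the composite $E_j^{n+1}\comp E_i^n$, is literally the bimodule equivalence to which \leref{alfa dagger} applies, and that the passage $\crta\ev\leftrightarrow\ev$ via $\tau$ together with the identification ${}^{op}(-)=(-)^{op}$ is used consistently on each factor — this is legitimate because $\C^{\Del (n+2)}$ is symmetric, but the switch-functor coherences in $\dul\Pic(\C^{\Del (n+2)})$ still have to be tracked so that nothing is lost beyond natural isomorphism.
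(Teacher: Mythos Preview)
Your proposal is correct and follows essentially the same approach as the paper: reduce to the single-factor identity $\ev_{\M_{ij}}\simeq\alpha_{ij}\Del_{\C^{\Del(n+2)}}(\alpha_{ij}^{op})^{-1}$ via \leref{alfa dagger}, then assemble the factors using \equref{double delta F}. The paper's proof is terser and simply writes $\ev_{ij}=\alpha_{ij}\Del_{\C}(\alpha^{op})^{-1}_{ij}$ directly (suppressing the $\crta\ev\leftrightarrow\ev$ passage and the switch $\tau$, in line with its standing convention of omitting symmetry isomorphisms), whereas you track these coherences explicitly --- but the underlying argument is the same.
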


\begin{proof}
We compute:
\begin{eqnarray*}
\lambda_{\M}&=&(\boxtimes_{\C^{\Del (n+2)}})_{j=2}^{n+2}(\boxtimes_{\C^{\Del (n+2)}})_{i=1}^{j-1} \hspace{0,2cm} \ev_{ij}\\
&\simeq& (\boxtimes_{\C^{\Del (n+2)}})_{j=2}^{n+2}(\boxtimes_{\C^{\Del (n+2)}})_{i=1}^{j-1} \hspace{0,2cm} \alpha_{ij} \Del_{\C}(\alpha^{op})^{-1}_{ij}\\
&\stackrel{\equref{double delta F}}{\simeq} & \delta_{n+1}\delta_{n}(\alpha).
\end{eqnarray*}
\qed\end{proof}

When $\M=\C^{\Del n}$ and $\alpha=\Id$, one gets:
\begin{equation}\eqlabel{lambda C}
\lambda_{\C^{\Del n}}=\C^{\Del (n+2)}.
\end{equation}

\medskip

For an invertible object $X$ in $\C^{\Del n}$ set $\tilde\delta_n(X):=
\odot_{i=1}^{n+1} \hspace{1,4mm} e_i^n(X)^{(-1)^i}$, where $\odot=\Del_{\C^{\Del (n+1)}}$. 

\medskip

The following lemma we will use in the proof of \thref{VZ}:

\begin{lma}
Let $\C$ be braided and let $\M$ be a one-sided $\C^{\Del n}$-bimodule category. For an invertible object $X$ in $\C^{\Del n}$ 
we denote by $m(X)$ the $\C^{\Del n}$-bilinear
autoequivalence of $\M$ given by acting by $X$, that is $m(X)(M)=M\crta\ot X=M\Del_{\C^{\Del n}}X$ for all $M\in\M$. Then
\begin{equation}\eqlabel{m-delta} 
\delta_n(m(X))=m(\tilde\delta_n(X)),
\end{equation}
where $\delta_n$ is from \equref{delta_n Pic}.
\end{lma}

\begin{proof}
That $m(X)$ is $\C^{\Del n}$-bilinear it follows from the fact that $\C$ is braided. The left linearity is clear. The natural isomorphism
$s_{M,Y}: m(X)(M\crta\ot Y)\to m(X)(M)\crta\ot Y$ for any $M\in\M$ and $Y\in\C^{\Del n}$ is induced by
$M\crta\ot\Phi_{Y,X}: M\crta\ot Y\crta\ot X\to M\crta\ot X\crta\ot Y$. 
Then the coherence diagram for
$s_{M,Y}$ holds by one of the braiding axioms. Set $X=X^1\Del X^2\Del\cdots\Del X^n$. First note that
$\tilde\delta_n(X)=X_1\Del_{\C^{\Del (n+1)}}X_2^{-1}\Del_{\C^{\Del (n+1)}}\cdots\Del_{\C^{\Del (n+1)}}X_{n+1}^{(-1)^n}$, where we applied
\rmref{tensor product copies of C} for the tensor product in $\C^{\Del (n+1)}$. Hence we have:
$$\begin{array}{rl}\vspace{0,1cm}\hspace{-0,2cm}
m(\tilde\delta_n(X))\hspace{-0,2cm}\hskip-1em&= m(I\Del X)\Del_{\C^{\Del (n+1)}}\cdots\Del_{\C^{\Del (n+1)}} m(X^1\Del\cdots\Del X^{i-1}\Del I\Del X^i\Del\cdots\Del X^n)^{(-1)^{i-1}}\\ \vspace{0,3cm}
&\hspace{0,5cm}\Del_{\C^{\Del (n+1)}}\cdots\Del_{\C^{\Del (n+1)}} m((X\Del I)^{(-1)^n})\\ \vspace{0,1cm}
&=m(X\Del_{\C^{\Del n}} {}_{e_1^n}\C^{\Del (n+1)})\Del_{\C^{\Del (n+1)}}\cdots\Del_{\C^{\Del (n+1)}}
m(X^{(-1)^{i-1}}\Del_{\C^{\Del n}} {}_{e_i^n}\C^{\Del (n+1)})\\ \vspace{0,2cm}
& \hspace{0,5cm}\Del_{\C^{\Del (n+1)}}\cdots\Del_{\C^{\Del (n+1)}}m(X^{(-1)^n}\Del_{\C^{\Del (n+1)}}{}_{e_{n+1}^n}\C^{\Del (n+1)})\\
\vspace{0,2cm}
&=m(X)_1\Del_{\C^{\Del (n+1)}}\cdots\Del_{\C^{\Del (n+1)}} m(X)_i^{(-1)^{i-1}}\Del_{\C^{\Del (n+1)}}\cdots\Del_{\C^{\Del (n+1)}} m(X)_{n+1}^{(-1)^n}\\
&=\delta_n(m(X)).
\end{array}$$
\qed\end{proof}

\medskip

We define $\dul{Z}^n(\C,\dul{\Pic})$ to be the category with objects $(\M,[\alpha])$,
with $\M\in \dul{\Pic}(\C^{\Del n})$, and $\alpha:\ \delta_n(\M)\to \C^{\Del(n+1)}$ an equivalence of $\C^{\Del(n+1)}$-module categories so that $\delta_{n+1}(\alpha)\simeq\lambda_{\M}$. 
(Observe that the latter means that $[\delta_{n+1}(\alpha)]=[\lambda_{\M}]$ and in the category $\dul{\Pic}(\C^{\Del n})$ morphisms determined by $\delta_{n+1}(\alpha)$ and $\lambda_{\M}$ are equal). 
A morphism $(\M,[\alpha])\to (\N,[\beta])$ is an equivalence of $\C^{\Del n}$-module categories $F:\ \M\to \N$ such that $\beta\circ \delta_n(F)\simeq\alpha$.
Then $\dul{Z}^n(\C,\dul{\Pic})$ is a symmetric monoidal category, with tensor
product $(\M,[\alpha])\ot (\N,[\beta])=(\M\Del_{\C^{\Del n}}\N,[\alpha\Del_{\C^{\Del (n+1)}}\beta])$ and unit object $(\C^{\Del n}, [\C^{\Del (n+1)}])$. Note that every object in this
category is invertible, so we can consider the Grothendieck group:
$$K_0\dul{Z}^n(\C,\dul{\Pic})={Z}^n(\C,\dul{\Pic}).$$
There is a strongly monoidal functor
$$d_{n-1}:\ \dul{\Pic}(\C^{\Del (n-1)})\to \dul{Z}^n(\C,\dul{\Pic}),$$
$d_{n-1}(\N)=(\delta_{n-1}(\N),[\lambda_{\N}])$. Let $B^n(\C,\dul{\Pic})$ be the subgroup of $Z^n(\C,\dul{\Pic})$,
consisting of elements represented by $d_{n-1}(\N)$, where $\N\in\dul{\Pic}(\C^{\Del (n-1)})$.
We then define:
$$ H^n(\C,\dul{\Pic})=Z^n(\C,\dul{\Pic})/B^n(\C,\dul{\Pic}).$$

\medskip

\begin{rem}
Observe that for $(\Ll, [\alpha]), (\M\Del_{\C}\N, [\beta])\in\dul Z^1(\C,\dul{\Pic})$ we have:
$$(\Ll, [\alpha])\iso(\Ll\Del_{\C}\C, [\alpha\Del_{\C}\C])\quad\textnormal{and}\quad(\M\Del_{\C}\N, [\beta])\iso(\N\Del_{\C}\M, [\beta\comp\delta_1(\tau)])$$
where the equivalences inducing the corresponding isomorphisms are the obvious ones. This clearly extends to the similar properties in
$\dul Z^n(\C,\dul{\Pic})$ for all $n\in\Nn$. Thus the omitting of these equivalence functors, which we applied so far when computing in $\dul\Pic(\C)$,
is justified also when passing to to the category $\dul Z^n(\C,\dul{\Pic})$.
\end{rem}

\section{Categorical Villamayor-Zelinsky sequence}

This section is dedicated to the construction of the infinite exact sequence of the type Villamayor-Zelinsky. Originally, it involved Amitsur
cohomology groups for commutative algebras over fields. We construct a version of it for symmetric finite tensor categories.
In the proof of \thref{VZ} we will frequently use the following result.

\begin{prop}
Given a finite tensor category $\C$ and an invertible $\C$-bimodule category $\M$, every $\C$-module autoequivalence $F\in\Fun_{\C}(\M, \M)$
is of the form $F\iso -\crta\ot X$ for some invertible object $X\in\C$.
\end{prop}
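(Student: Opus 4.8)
The plan is to exploit the rigid monoidal structure on $(\C\x\Bimod,\Del_{\C},\C)$ established in the previous section, together with the inner-hom description of duals, to pin down what a $\C$-module autoequivalence of an invertible $\M$ must look like. First I would observe that, since $\M$ is invertible with dual $\M^{op}$ (by \prref{left dual}, respectively \prref{ev-db-equiv}), the functor category $\Fun_{\C}(\M,\M)$ is equivalent, via $\theta_{\M,\M}$ from \equref{teta}, to $\M^{op}\Del_{\C}\M$, which in turn is equivalent to $\C$ through the composite $\ev$--type equivalence coming from invertibility (\rmref{invertible-equiv}, \rmref{invertible-equiv right}). More precisely, the unit $R\colon\C\to\Fun_{\C}(\M,\M)$, $R(X)=-\crta\ot X$, is an equivalence of $\C$-bimodule categories when $\M$ is invertible; this is exactly \rmref{invertible-equiv}, citing \cite[Proposition 4.2]{ENO}. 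So every object of $\Fun_{\C}(\M,\M)$ is isomorphic to $R(X)=-\crta\ot X$ for a unique (up to iso) $X\in\C$.

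Next I would isolate which $X\in\C$ give autoequivalences. If $F\iso -\crta\ot X$ is an equivalence, then so is a quasi-inverse $G$, and $G\iso -\crta\ot Y$ for some $Y\in\C$ by the same argument. Composing, $-\crta\ot(X\ot Y)\iso (-\crta\ot X)\crta\ot Y\iso G\comp F\iso \Id_{\M}=R(I)=-\crta\ot I$ as $\C$-module functors; here the first isomorphism uses the right $\C$-module associativity constraint of $\M$. Since $R$ is an equivalence it is in particular faithful on isomorphism classes, so $X\ot Y\iso I$, and symmetrically $Y\ot X\iso I$ from $F\comp G\iso\Id_{\M}$. Hence $X$ is invertible, i.e. $X\in\Inv(\C)$, and $F\iso -\crta\ot X$ as claimed. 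I should be slightly careful that $R$ being a monoidal equivalence $\C\to\Fun_{\C}(\M,\M)$ (with the composition monoidal structure on the target) is what makes $X\ot Y\iso I$ follow from $R(X\ot Y)\iso R(I)$; this is where the $\C$-bilinearity of $R$ noted just before \equref{coev}, combined with \rmref{invertible-equiv}, does the work.

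The only real obstacle I anticipate is bookkeeping the monoidal structure on $\Fun_{\C}(\M,\M)$ under composition and checking that $R$ is monoidal with respect to it, so that invertibility of $X\ot Y$ genuinely drops out of invertibility of $R(X\ot Y)$ in $\Fun_{\C}(\M,\M)$; equivalently, one needs that the equivalence $R$ reflects invertible objects. This is essentially formal — an equivalence of monoidal categories sends invertibles to invertibles and reflects them — but it does rely on $R$ being a monoidal functor, which in turn uses the coherence of the right $\C$-action on $\M$ exactly as recorded in the paragraph introducing $R$. An alternative, perhaps cleaner, route avoids invoking a monoidal structure at all: given the equivalence $F\iso-\crta\ot X$, use the fact that $\M$ is invertible to write $\M\Del_{\C}\M^{op}\simeq\C$, transport $F$ to a $\C$-bimodule autoequivalence of $\C$ itself, note that any such is tensoring by an invertible object of $\C$ (a $\C$-bimodule endofunctor of $\C$ is determined by its value on $I$, and being an equivalence forces that value to be invertible), and transport back. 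I would present the first argument as the main line and mention the second only if space allows.
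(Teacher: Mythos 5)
Your proposal is correct and follows essentially the same route as the paper: both reduce to the fact (from \cite[Proposition 4.2]{ENO}, recorded in \rmref{invertible-equiv}) that $R(X)=-\crta\ot X$ gives an equivalence $\C\to\Fun_{\C}(\M,\M)$ when $\M$ is invertible, so that $F\iso-\crta\ot X$ for some $X$, and then observe that $F$ being an autoequivalence forces $X\in\Inv(\C)$. The paper dismisses this last step with ``clearly,'' whereas you spell it out via a quasi-inverse $-\crta\ot Y$ and the faithfulness of $R$ on isomorphism classes; that is a legitimate and welcome elaboration, not a different proof.
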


\begin{proof}
Since $\M$ is invertible, by \cite[Proposition 4.2]{ENO} any $F\in\Fun_{\C}(\M, \M)$ is of the form $F\iso -\crta\ot X$ for some $X\in\C$.
If $F$ is an equivalence, then clearly $X$ is invertible.
\qed\end{proof}

Observe that in the way the notation is fixed in the following result the morphisms $\alpha_1$ and $\beta_1$ are trivial maps, as $H^0(\C,{\Pic})$
is the trivial group ($\Pic(k)=0$).

\medskip

\begin{thm}\thlabel{VZ}
Let $\C$ be a symmetric finite tensor category. There is a long exact sequence
\begin{eqnarray}\eqlabel{VZ seq}
1&\longrightarrow&H^2(\C,\Inv)\stackrel{\alpha_2}{\longrightarrow}H^1(\C,\dul{\Pic})\stackrel{\beta_2}{\longrightarrow}H^1(\C,{\Pic})\\
&\stackrel{\gamma_2}{\longrightarrow}&H^3(\C,\Inv)\stackrel{\alpha_3}{\longrightarrow}H^2(\C,\dul{\Pic})\stackrel{\beta_3}{\longrightarrow}H^2(\C,{\Pic})\nonumber\\
&\stackrel{\gamma_3}{\longrightarrow}&H^4(\C,\Inv)\stackrel{\alpha_4}{\longrightarrow}H^3(\C,\dul{\Pic})\stackrel{\beta_4}{\longrightarrow}H^3(\C,{\Pic})\nonumber\\
&\stackrel{\gamma_4}{\longrightarrow}&\cdots \nonumber
\end{eqnarray}
\end{thm}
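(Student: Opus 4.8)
The plan is to write down the three families of maps $\alpha_n,\beta_n,\gamma_n$ explicitly and then check exactness at each term, exploiting the $3$-periodicity so that only three essentially different exactness arguments (plus the initial injectivity, a degenerate case of one of them) are needed. The single technical input behind everything is the Proposition preceding the theorem: for an invertible one-sided $\C^{\Del m}$-bimodule category $\Pp$ every $\C^{\Del m}$-module autoequivalence of $\Pp$ equals $-\crta\ot X$ for a unique $X\in\Inv(\C^{\Del m})$; applied to $\Pp=\C^{\Del m}$ this identifies the group of autoequivalences of the unit object with $\Inv(\C^{\Del m})$, which is what converts categorical ambiguities into $\Inv$-valued cochains. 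Recall also that, since $\dul Z^n(\C,\dul\Pic)$ is a symmetric monoidal groupoid with all objects invertible, $Z^n(\C,\dul\Pic)=K_0\dul Z^n(\C,\dul\Pic)$ is just the group of isomorphism classes of objects, so identities of classes can be tested by isomorphisms of pairs $(\M,\alpha)$.

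First I would define the maps. I take $\alpha_n\colon H^n(\C,\Inv)\to H^{n-1}(\C,\dul\Pic)$ to send a cocycle $X$ to the class of $(\C^{\Del(n-1)},-\crta\ot X)$: this pair lies in $\dul Z^{n-1}(\C,\dul\Pic)$ because $\delta_n(-\crta\ot X)=m(\tilde\delta_n(X))=\Id$ by \equref{m-delta} and the cocycle identity, while $\lambda_{\C^{\Del(n-1)}}=\C^{\Del(n+1)}$ by \equref{lambda C}; and when $X=\tilde\delta_{n-1}(W)$ is a coboundary the autoequivalence $m(W)$ gives an isomorphism $(\C^{\Del(n-1)},-\crta\ot X)\iso(\C^{\Del(n-1)},\Id)=d_{n-2}(\C^{\Del(n-2)})$ in $\dul Z^{n-1}$, so $\alpha_n$ descends. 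I take $\beta_n\colon H^{n-1}(\C,\dul\Pic)\to H^{n-1}(\C,\Pic)$ to be induced by $(\M,\alpha)\mapsto[\M]$; the equivalence $\alpha$ shows $\delta_{n-1}[\M]=0$, and $d_{n-2}(\Ll)$ maps to the coboundary $[\delta_{n-2}(\Ll)]$, so $\beta_n$ is well defined. Finally $\gamma_n\colon H^{n-1}(\C,\Pic)\to H^{n+1}(\C,\Inv)$ is the connecting homomorphism: choosing any $\C^{\Del n}$-bimodule equivalence $\alpha\colon\delta_{n-1}(\M)\to\C^{\Del n}$, the composite $\delta_n(\alpha)\comp\lambda_{\M}^{-1}$ is an autoequivalence of $\C^{\Del(n+1)}$, hence of the form $-\crta\ot Y$, and I set $\gamma_n([\M])=[Y]$. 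That $Y$ is an $\Inv$-cocycle comes from applying $\delta_{n+1}$ and combining $\delta_{n+1}\delta_n(\alpha)=\lambda_{\delta_{n-1}(\M)}$ (\coref{lambda alfa}) with $\delta_{n+1}(\lambda_{\M})=\lambda_{\delta_{n-1}(\M)}$ (\equref{delta-lambda}), which forces $m(\tilde\delta_{n+1}(Y))=\Id$; changing $\alpha$ by an autoequivalence $-\crta\ot W$ of $\delta_{n-1}(\M)$ changes $Y$ by $\tilde\delta_n(W)$, a coboundary, and if $\M\simeq\delta_{n-2}(\N)$ one may take $\alpha$ to be the transport of $\lambda_{\N}$, whence $\delta_n(\alpha)\comp\lambda_{\M}^{-1}=\Id$ by \equref{delta-lambda}, so $\gamma_n$ kills $B^{n-1}(\C,\Pic)$.

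Then I would verify exactness, one term per residue class mod $3$. \emph{At $H^{n-1}(\C,\dul\Pic)$:} $\beta_n\alpha_n=0$ since $\alpha_n(X)$ has underlying category the unit; conversely, if $[\M]\simeq\delta_{n-2}(\N)$ is a coboundary, I tensor $(\M,\alpha)$ with the coboundary $d_{n-2}(\N^{op})$ and use the evaluation equivalence $\M\Del_{\C^{\Del(n-1)}}\delta_{n-2}(\N^{op})\simeq\C^{\Del(n-1)}$ to replace it, modulo $B^{n-1}(\C,\dul\Pic)$, by a pair $(\C^{\Del(n-1)},-\crta\ot X)$; the cocycle constraint forces $\tilde\delta_n(X)\iso I$, so this pair represents $\alpha_n([X])$. \emph{At $H^{n-1}(\C,\Pic)$:} $\gamma_n\beta_n=0$ because for $(\M,\alpha)\in\dul Z^{n-1}(\C,\dul\Pic)$ one computes $\gamma_n([\M])$ with $\alpha$ itself and then $\delta_n(\alpha)\comp\lambda_{\M}^{-1}=\Id$ by the cocycle identity; conversely, if $\gamma_n([\M])=0$ then $\delta_n(\alpha)\comp\lambda_{\M}^{-1}=m(\tilde\delta_n(Z))$ for some $Z$, and $\alpha'=\alpha\comp(-\crta\ot Z)^{-1}$ satisfies $\delta_n(\alpha')=\lambda_{\M}$, so $(\M,\alpha')\in\dul Z^{n-1}(\C,\dul\Pic)$ maps to $[\M]$ under $\beta_n$. \emph{At $H^n(\C,\Inv)$:} $\alpha_n\gamma_{n-1}=0$ because, writing $\gamma_{n-1}([\N])=[Y]$ for $Y$ obtained from a chosen $\beta\colon\delta_{n-2}(\N)\to\C^{\Del(n-1)}$, that same $\beta$ exhibits $d_{n-2}(\N)\iso(\C^{\Del(n-1)},-\crta\ot Y^{-1})$ in $\dul Z^{n-1}$, so $\alpha_n([Y])$ is trivial; conversely, if $\alpha_n([X])=0$ then $(\C^{\Del(n-1)},-\crta\ot X)\iso d_{n-2}(\Ll)=(\delta_{n-2}(\Ll),\lambda_{\Ll})$ via some $F$, whence $[\Ll]\in Z^{n-2}(\C,\Pic)$ and, taking $\beta=F^{-1}$, one gets $\gamma_{n-1}([\Ll])=[X^{-1}]$, so $[X]=-\gamma_{n-1}([\Ll])\in\im\gamma_{n-1}$. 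The leftmost exactness $1\to H^2(\C,\Inv)\stackrel{\alpha_2}{\longrightarrow}$ is the case $n=2$ of this last item, since $H^0(\C,\Pic)=\Pic(k)=0$.

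I expect the real difficulty to lie in the coherence bookkeeping behind the well-definedness of $\gamma_n$ and behind exactness at $H^{n-1}(\C,\dul\Pic)$: in both places a relation holding at the level of Picard \emph{groups} (mere existence of bimodule equivalences) must be upgraded to a compatible statement about the categorical cocycle data $(\M,\alpha)$, respecting the constraint $\delta_n(\alpha)=\lambda_{\M}$. This rests on the naturality and multiplicativity of $\ev$, $\coev$, and $\lambda$ (\rmref{lambda mult}, \coref{lambda alfa}, \equref{delta-lambda}) and on \prref{left dual} and \prref{ev-db-equiv} identifying $\M^{op}$ as a two-sided dual with $\ev,\coev$ mutually quasi-inverse. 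Once these are in place, the three exactness verifications above, repeated periodically, assemble into the full sequence \equref{VZ seq}.
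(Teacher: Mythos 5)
Your proposal is correct and follows essentially the same route as the paper: the maps $\alpha_n,\beta_n,\gamma_n$ are defined identically (up to replacing $\lambda_{\M}\circ\delta_n(\alpha)^{-1}$ by its inverse in $\gamma_n$, which only inverts the representing object of $\Inv$), and the three periodic exactness arguments, the use of \equref{m-delta}, \equref{delta-lambda}, \coref{lambda alfa} and the evaluation equivalence, and the derivation of injectivity of $\alpha_2$ from the triviality of $H^0(\C,\Pic)$ all match the paper's proof. The one place you compress more than the paper is the surjectivity half of exactness at $H^{n-1}(\C,\dul{\Pic})$, where the paper proves a separate Claim that $\crta\ev_{\M}$ really is a morphism of pairs $(\M^{op}\Del_{\C^{\Del(n-1)}}\M,\lambda_{\N^{op}}\Del_{\C^{\Del n}}\alpha)\to(\C^{\Del(n-1)},m(X))$ in $\dul Z^{n-1}(\C,\dul{\Pic})$; you correctly identify this as the key point and the tools needed, so this is a matter of detail rather than of approach.
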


\begin{proof}
\u{Definition of $\alpha_n$}. Let $\crta X\in Z^n(\C,\Inv)$. Then
$(\C^{\Del (n-1)}, [m(X)])\in \dul{Z}^{n-1}(\C,\dul{\Pic})$, since
$$\delta_n(m(X))\equal{\equref{m-delta}}m(\tilde\delta_n(X))\simeq m(I^{\Del (n+1)})=\C^{\Del (n+1)}
\equal{\equref{lambda C}}\lambda_{\C^{\Del (n-1)}}.$$
If $\crta X=\crta Y$, there is an isomorphism $X\stackrel{\iso}{\to} Y$ in $\C^{\Del n}$ and hence a natural $\C^{\Del n}$-module isomorphism between the equivalences 
$m(X)$ and $m(Y)$, so that $(\C^{\Del (n+1)}, [m(X)])=(\C^{\Del (n+1)}, [m(Y)])$. 

In the case that $\crta X$ is a coboundary: $X=\tilde\delta_{n-1}(Y)$ for some $\crta \in \Inv(\C^{\Del (n-1)})$, we have that
$m(Y):\ \C^{\Del (n-1)}\to \C^{\Del (n-1)}$ is an isomorphism between
$(\C^{\Del (n-1)}, [m(\tilde\delta_{n-1}(Y))])$ and $ (\C^{\Del (n-1)},[\C^{\Del n}])$ because of \equref{m-delta}.
Then the map
$$\alpha_n([\crta X])=[(\C^{\Del (n-1)},[m(X)])]$$
is well-defined ($\alpha_n$ is clearly a group map). \\
\u{Definition of $\beta_n$}. We define
$\beta_n[(\M,[\alpha])]=[\M]$.\\
\u{Definition of $\gamma_n$}. Let $[\M]\in Z^{n-1}(\C,\Pic)$. Then there exists
a $\C^{\Del n}$-module equivalence $\alpha:\ \delta_{n-1}(\M)\to \C^{\Del n}$.
The composition $\lambda_{\M}\circ \delta_n(\alpha)^{-1}:\ \C^{\Del (n+1)}\to \C^{\Del (n+1)}$
is an equivalence of $\C^{\Del (n+1)}$-module categories, so it is equal to $m(X)$ for some invertible $X\in \C^{\Del (n+1)}$. Now:
\begin{eqnarray*}
&&\hspace*{-2cm}
m(\tilde\delta_{n+1}(X))\stackrel{\equref{m-delta}}{=}\delta_{n+1}(m(X))=\delta_{n+1}(\lambda_{\M})\circ
((\delta_{n+1}\circ\delta_n)(\alpha))^{-1}\\
&\stackrel{\equref{delta-lambda}, \hspace{0,1cm}\cref{lambda alfa}}{\simeq}&
\lambda_{\delta_{n-1}(\M)}\circ \lambda_{\delta_{n-1}(\M)}^{-1}=\C^{\Del (n+2)},
\end{eqnarray*}
so $\tilde\delta_{n+1}(X)\iso I^{\Del (n+2)}$, and $\crta X\in Z^{n+1}(\C,\Inv)$.

If $\alpha':\ \delta_{n-1}(\M)\to \C^{\Del n}$
is another $\C^{\Del n}$-module equivalence, then we have
$\lambda_{\M}\circ \delta_n(\alpha')^{-1}=m(X')$ for some invertible $X'\in \C^{(n+1)}$.
Then $\alpha'\circ\alpha^{-1}$
is a $\C^{\Del n}$-module autoequivalence of $\C^{\Del n}$, so $\alpha'\circ\alpha^{-1}=m(Z^{-1})$,
for some invertible $Z\in \C^{\Del n}$. Now we have
$$m(X')=\lambda_{\M}\circ \delta_n(\alpha')^{-1}=\lambda_{\M}\circ \delta_n(\alpha)^{-1}\circ
\delta_n(m(Z))\equal{\equref{m-delta}}m(X)\circ m(\tilde\delta_n(Z))= m(X\tilde\delta_n(Z)),$$
yielding $X'=X\tilde\delta_n(Z)$, so $[\crta X]=[\crta X']$ in $H^{n+1}(\C,\Inv)$. Thus we have a well-defined map
$Z^{n-1}(\C,\Pic)\to H^{n+1}(\C,\Inv)$ given by $[\M]\mapsto [\crta X]$ (such that $m(X)=\lambda_{\M}\circ \delta_n(\alpha)^{-1}$ for some
$\C^{\Del n}$-module equivalence $\alpha:\ \delta_{n-1}(\M)\to \C^{\Del n}$).

This map induces a map $\gamma_n:\ H^{n-1}(\C,\Pic)\to H^{n+1}(\C,\Inv)$.
Indeed, if $[\M]\in B^{n-1}(\C,\Pic)$ so that $\M=\delta_{n-2}(\N)$ for some
$[\N]\in\Pic(\C^{\Del (n-2)})$, then $\lambda_{\N}:\ \delta_{n-1}(\M)\to\C^{\Del n}$ is a $\C^{\Del n}$-module equivalence and as above we
have for some invertible $Y\in\C^{\Del (n+1)}$:
$$m(Y)=\lambda_{\M}\circ\delta_n(\lambda_{\N})^{-1} \equal{\equref{delta-lambda}}\lambda_{\M}\circ \lambda_{\delta_{n-2}(\N)}^{-1}=
m(I^{\Del (n+1)}),$$
hence $Y=I^{\Del (n+1)}$.

\u{Exactness at $H^{n-1}(\C,\dul{\Pic})$}. It is clear that $\beta_n\circ\alpha_n=1$.\\
Take $[(\M,[\alpha])]\in H^{n-1}(\C,\dul{\Pic})$ such that $\beta_n[(\M,[\alpha])]=[\M]=1$
in $H^{n-1}(\C,{\Pic})$. Then $\M\simeq\delta_{n-2}(\N)$ for some
$\N\in \dul{\Pic}(\C^{\Del (n-2)})$. The composition
$$\lambda_{\N}^{-1}\circ\alpha:\ (\delta_{n-1}\circ\delta_{n-2})(\N)\to(\delta_{n-1}\circ\delta_{n-2})(\N)$$
is a $\C^{\Del n}$-module equivalence, so it is given by $m(X)$ for some invertible $X\in\C^{\Del n}$. Then it follows:
$$m(\tilde\delta_n(X))\stackrel{\equref{m-delta}}{=}\delta_n(\lambda_{\N})^{-1}\circ\delta_n(\alpha)\stackrel{\equref{delta-lambda}}{\simeq}
\lambda^{-1}_{\delta_{n-2}(\N)}\circ \lambda_{\M}=\C^{\Del (n-1)}$$
meaning that $\tilde\delta_n(X)\iso I^{\Del (n+1)}$ and $[\crta X]\in H^n(\C,\Inv)$.

To prove that $[(\M,[\alpha])]$ is in the image of $\alpha_n$, we will need the following:

\begin{claim}
Let $\M, \alpha$ and $X$ be as above. Then $m(X)\comp\delta_{n-1}(\crta\ev_{\M})=
\lambda_{\N^{op}}\Del_{\C^{\Del n}}\alpha$. Consequently, $\crta\ev_{\M}:\ \M^{op}\Del_{\C^{\Del (n-1)}}\M\to \C^{\Del (n-1)}$ is an isomorphism
$$(\M^{op}\Del_{\C^{\Del (n-1)}}\M, [\lambda_{\N^{op}}\Del_{\C^{\Del n}}\alpha])\to(\C^{\Del (n-1)},[m(X)])$$
in $\dul{Z}^{n-1}(\C,\dul{\Pic})$.
\end{claim}

\begin{proof}
First of all, observe that $(\M^{op}\Del_{\C^{\Del (n-1)}}\M, [\lambda_{\N^{op}}\Del_{\C^{\Del n}}\alpha])=
(\delta_{n-2}(\N^{op}),[\lambda_{\N^{op}}])\ot(\M,[\alpha])\in\dul{Z}^{n-1}(\C,\dul{\Pic})$.
Now from \equref{delta delta ev} and by \rmref{lambda mult} it follows that:
$$\delta_{n-1}(\crta\ev_{\M})
\simeq\left( (\boxtimes_{\C^{\Del n}})_{j=2}^n(\boxtimes_{\C^{\Del n}})_{i=1}^{j-1} \hspace{0,2cm} \ev_{\N^{op}_{ij}} \right)
\Del_{\C^{\Del n}}
 \left( (\boxtimes_{\C^{\Del n}})_{j=2}^n(\boxtimes_{\C^{\Del n}})_{i=1}^{j-1} \hspace{0,2cm} \ev_{\N_{ij}}  \right) .$$
Thus:
\begin{eqnarray*} 
&&\hspace*{-1cm}
\delta_{n-1}(\crta\ev_{\M})(id_{\delta_{n-1}(\M^{op})} \Del_{\C^{\Del n}} \lambda_{\N}^{-1}) \\
&\simeq &  \left(\left( (\boxtimes_{\C^{\Del n}})_{j=2}^n(\boxtimes_{\C^{\Del n}})_{i=1}^{j-1} \hspace{0,2cm} \ev_{\N^{op}_{ij}}\right)
\Del_{\C^{\Del n}}
\left( (\boxtimes_{\C^{\Del n}})_{j=2}^n(\boxtimes_{\C^{\Del n}})_{i=1}^{j-1} \hspace{0,2cm} \ev_{\N_{ij}} \right)\right)\\
&& \comp \left(id_{\delta_{n-1}(\M^{op})} \Del_{\C^{\Del n}} \left((\boxtimes_{\C^{\Del n}})_{j=2}^n(\boxtimes_{\C^{\Del n}})_{i=1}^{j-1} \hspace{0,2cm} \ev^{-1}_{\N_{ij}}\right)\right)\\
&\simeq & \left((\boxtimes_{\C^{\Del n}})_{j=2}^n(\boxtimes_{\C^{\Del n}})_{i=1}^{j-1} \hspace{0,2cm} \ev_{\N^{op}_{ij}}\right)
\Del_{\C^{\Del n}} \C^{\Del n} \\
&=& \lambda_{\N^{op}}\Del_{\C^{\Del n}} \C^{\Del n}.
\end{eqnarray*}
On the other hand, we have that $\delta_{n-1}(\ev_{\M})$ is right $\C^{\Del n}$-linear, since so is $\ev_{\M}$. This implies the first identity
in the following computation:
\begin{eqnarray*}
&&\hspace*{-2cm}
m(X)\comp\delta_{n-1}(\ev_{\M})=\delta_{n-1}(\ev_{\M})(\Id_{\delta_{n-1}(\M^{op})} \Del_{\C^{\Del n}} m(X))\\
&=& \delta_{n-1}(\ev_{\M})(\Id_{\delta_{n-1}(\M^{op})} \Del_{\C^{\Del n}} \lambda_{\N}^{-1})(\Id_{\delta_{n-1}(\M^{op})} \Del_{\C^{\Del n}}\alpha) \\
 &\simeq& (\lambda_{\N^{op}}\Del_{\C^{\Del n}} \C^{\Del n})(\Id_{\delta_{n-1}(\M^{op})} \Del_{\C^{\Del n}}\alpha) \\
&=& \lambda_{\N^{op}}\Del_{\C^{\Del n}} \alpha.
\end{eqnarray*}
\qed\end{proof}

Observe that $[(\delta_{n-2}(\N^{op}),[\lambda_{\N^{op}}])]=1$ in $H^{n-1}(\C,\dul{\Pic})$. Now we have:
\begin{eqnarray*}
&&\hspace*{-2cm}
[(\M,[\alpha])]=[(\delta_{n-2}(\N^{op}),[\lambda_{\N^{op}}])][(\M,[\alpha])]\\
&=&[(\M^{op}\Del_{\C^{\Del (n-1)}}\M, [\lambda_{\N^{op}}\Del_{\C^{\Del n}}\alpha])]=
[(\C^{\Del (n-1)},[m(X)])]=\alpha_n([\crta X]).
\end{eqnarray*}

\u{Exactness at $H^{n-1}(\C,{\Pic})$}.
Let $[(\M,[\alpha])]\in H^{n-1}(\C,\dul{\Pic})$, then $\beta_n[(\M,[\alpha])]=[\M]$.
In order to compute $\gamma_n([\M])$, we choose the $\C^{\Del n}$-module equivalence $\alpha: \delta_{n-1}(\M)\to \C^{\Del n}$.
We have $\delta_n(\alpha)=\lambda_{\M}$, so:
$$\delta_n(\alpha)\circ\lambda_{\M}^{-1}=m(I^{\Del (n+1)}),$$
and thus $\gamma_n\circ\beta_n=1$.
Now, assume for $[\M]\in H^{n-1}(\C,\Pic)$ that $\gamma_n([\M])=1$. Then there is a $\C^{\Del n}$-module equivalence
$\alpha: \delta_{n-1}(\M)\to \C^{\Del n}$ such that $\delta_n(\alpha)\circ \lambda_{\M}^{-1}=m(X)$ for some invertible $X\in\C^{\Del (n+1)}$,
and we know that $\gamma_n([\M])=[X]$, so $\crta X\in B^{n+1}(\C,\Inv)$. Then $X=\tilde\delta_n(Y)$, for some invertible $Y\in\C^{\Del n}$.
Consider the $\C^{\Del n}$-module equivalence
$$\alpha'=m(Y^{-1})\circ\alpha:\ \delta_{n-1}(\M)\to \C^{\Del n}.$$
Then:
$$\delta_n(\alpha')\circ\lambda_{\M}^{-1}=\delta_n(m(Y^{-1}))\circ m(X)\equal{\equref{m-delta}}
m(\tilde\delta_n(Y^{-1}))\circ m(X)\simeq\C^{(n+1)},$$
therefore: $[(\M,[\alpha'])]\in Z^{n-1}(\C,\dul{\Pic})$, and $[\M]=\beta_n[(\M,[\alpha'])]$.\\
\u{Exactness at $H^{n+1}(\C,\Inv)$}. Take $[\M]\in H^{n-1}(\C,{\Pic})$, and choose
a $\C^{\Del n}$-module equivalence $\alpha:\ \delta_{n-1}(\M)\to \C^{\Del n}$. Then $\gamma_n([\M])=[X]$ for some invertible $X\in\C^{\Del (n+1)}$,
where $m(X)=\lambda_{\M}\circ\delta_n(\alpha)^{-1}$, and we have:
$(\alpha_{n+1}\circ\gamma_n)([\M])=[(\C^{\Del n},[m(X)])].$ It is immediate that $\alpha$ defines an isomorphism
$$(\delta_{n-1}(\M),[\lambda_{\M}])\to (\C^{\Del n},[m(X)])$$
in $\dul{Z}^n(\C,\dul{\Pic})$. It follows that $[(\C^{\Del n},[m(X)])]\in B^n(\C,\dul{\Pic})$,
and thus $\alpha_{n+1}\circ\gamma_n=1$.
\smallskip

Take $\crta X\in Z^{n+1}(\C,\Inv)$, and assume that
$$\alpha_{n+1}([\crta X])=[(\C^{\Del n},[m(X)])]=1,$$
that is,
$$(\C^{\Del n},[m(X)])\cong d_{n-1}(\N)=(\delta_{n-1}(\N),[\lambda_{\N}])$$
in $\dul{Z}^n(\C,\dul{\Pic})$ for some $\N\in \dul{\Pic}(\C^{\Del (n-1)})$. This means that there is a $\C^{\Del n}$-module equivalence
$\alpha:\ \delta_{n-1}(\N)\to \C^{\Del n}$ such that $\lambda_{\N}=m(X)\circ \delta_n(\alpha)$. Thus $\gamma_n([\N])=[\crta X]$.

\u{$\alpha_2$ is injective}. Take $\crta X\in Z^2(\C,\Inv)$, and suppose that $\alpha_2([\crta X])=[(\C,[m(X)])]=[(\C,[m(I\Del I)])]$. Then there exists a
$\C$-module autoequivalence $\alpha: \C\to\C$ such that $m(X)=m(I\Del I)\circ \delta_1(\alpha)$. Moreover, $\alpha$ is given by
tensoring by some invertible $Y\in \C$, so: $\delta_1(\alpha)=\delta_1(m(Y))=m(\tilde\delta_1(Y))$. For a consequence we get:
$\crta X=\crta{\tilde\delta_1(Y)}\in B^2(\C,\Inv)$.
\qed\end{proof}

\subsection{Examples of symmetric finite tensor categories and their Picard groups}

Since any finite tensor category is equivalent to a category of finite-dimensional representations of a finite-dimensional weak quasi Hopf algebra, \cite[Proposition 2.7]{EO},
then every symmetric finite tensor category is equivalent to a category $\Rep H$ of finite-dimensional representations of a finite-dimensional triangular weak quasi-Hopf algebra $H$.
We will say that a finite tensor category is {\em strong} if it is equivalent to $\Rep H$, where $H$ is a Hopf algebra (in the style of \cite{NR}).

It is known that every finite-dimensional triangular Hopf algebra over an algebraically closed field of characteristic zero is the Drinfel'd twist of a modified
supergroup algebra, \cite[Theorem 5.1.1]{AEG}, \cite[Theorem 4.3]{EG}. A finite-dimensional triangular Hopf algebra $H$ with an $R$-matrix $\R$ is called a
{\em modified supergroup algebra} if there exist:
\begin{enumerate}
	\item a finite group $G$;
	\item a central element $u\in G$ with $u^2=1$;
	\item a linear representation of $G$ on a finite-dimensional vector space $V$ on	which $u$ acts as $-1$.
\end{enumerate}
such that $H\iso\Lambda(V)\rtimes kG$ as the Radford biproduct Hopf algebra, where the elements in $G$ are group-like and the elements in $V$ are $(u, 1)$-primitive.
Namely, the action of $G$ on $V$ makes the exterior algebra $\Lambda(V)$ into
a $kG$-module algebra and we can construct the smash product $\Lambda(V)\# kG$. The element of $kG\ot kG$:
$$\R=\R_u=\frac{1}{2}(1\ot 1+u\ot 1+1\ot u-u\ot u)$$
is a triangular structure on $kG$ and $\Lambda(V)$ is a Hopf algebra in ${}_{kG}\M$, by defining
$$\Delta(v)=1\ot v+v\ot 1,\quad\varepsilon(v)=0\quad\textnormal{and}\quad S(v)=-v.$$
As a matter of fact, $\Lambda(V)$ is a Yetter Drinfel'd module algebra over $kG$ with the coaction induced by $\Lambda(v)=u\ot v$, so that $\Lambda(V)$ is indeed
a Hopf algebra in ${}_H ^H\YD$.
The triangular structure $\R_u$ extends to the triangular structure of $\Lambda (V)\rtimes kG$.
\par\smallskip

The Hopf subalgebra of the Radford biproduct $\Lambda (V)\rtimes kG$ which is generated
by $u$ and by the $(u, 1)$-primitive elements of $V$ is isomorphic, as a triangular Hopf
algebra, to Nichols Hopf algebra $E(n)\iso\Lambda(n)\times k\Zz_2$, where $n=dim(V)$,
with the triangular structure $\R_u$. The Nichols Hopf algebra $E(n)$ is a modified supergroup algebra
whose representation category is the most general non-semisimple symmetric finite tensor category whithout non-trivial Tannakian subcategories.
(A symmetric fusion category is called {\em Tannakian} if it is equivalent to $\Rep G$, the category of representations of
a finite group $G$.) For $n=1$ we obtain the Sweedler Hopf algebra $H_4$.
\par\medskip

The Brauer-Picard group is computed for a number of categories. In \cite{ENO} it is done for the representation category of any finite
abelian group, and in \cite{NR} this result is extended to a number of finite groups; in \cite{M4} the Brauer-Picard group is computed for the
representation category of a modified supergroup algebra, whereas in \cite{BN} it is computed for the Nichols Hopf algebra $E(n)$; we also mention \cite{GS}.

The subgroup of the Brauer-Picard group of the representation category of a modified supergroup algebra $\Lambda(V)\rtimes kG$ determined by the
one-sided bimodule categories over $\C=\Rep(\Lambda(V)\rtimes kG)$ is the Picard group of $\C$. In view of the above said, this subgroup is one
of the central protagonists of this article, as such $\C$ is the most general symmetric finite tensor category that is strong. It was computed in
\cite[Corollary 8.10]{Gr} that $\Pic(\Rep G)\iso H^2(G, k^{\times})$ for any finite group $G$.

\par\smallskip

In \cite[Proposition 3.7]{DZ} it is proved that for a finite-dimensional quasi-triangular Hopf algebra $H$ every $H$-Azumaya algebra in the Brauer group $\BM(k, H, \R)$
is exact. This allows one to conclude that $\Pic(\Rep H)=\BM(k, H, \R)$, in view of \cite[Section 3.2]{DN}.
The latter group has extensively been studied, in particular for the Radford biproduct Hopf algebras
$H=B \rtimes L$. A deep insight about the decomposition $\BM(k, B \rtimes L ,\overline{\R}) \cong \BM(k,L,\R) \times \Gal(B;{}_L\M)$, where $L$ is a quasi-triangular Hopf
algebra whose quasi-triangular structure $\R$ extends to that on $H$ (denoted by $\overline{\R}$) and $B$ is a commutative (and cocommutative) Hopf algebra in ${}_L\M$,
is given in \cite[Theorem 6.5]{CuF}. Here $\Gal(B;{}_L\M)$ is the group of $B$-Galois objects in ${}_L\M$, which are one-sided comodules over $B$ and $B$ is cocommutative
in ${}_L\M$. Observe that this is precisely the case in the modified supergroup algebras. Henceforth, we may write:
$$\Pic(\C)\iso\Pic(\Rep(\Lambda(V)\rtimes kG))\iso\BM(k,kG,\R_u) \times \Gal(\Lambda(V);{}_{kG}\M)$$
for every symmetric finite tensor category $\C$ that is strong. Here $\BM(k,kG,\R_u)$ is the Brauer group of $G$-graded vector spaces with respect
to the braiding induced by $\R_u$. Moreover, from the direct sum decomposition proved in \cite{Car2} we also have:
\begin{eqnarray*} 
\Pic(\C)\iso \BM(k,kG,\R_u) \times S^2(V^*)^G
\end{eqnarray*}
where $S^2(V^*)^G$ is the group of symmetric matrices over $V^*$ invariant under the
conjugation by elements of $G$. 

In particular, for the Picard group of the representation categories of triangular Hopf algebras $H_4$ and $E(n)$ mentioned above
we obtain from \cite{CC2} and \cite{VZ2}:
$$\Pic(\Rep(H_4))\iso \BW(k) \times (k,+)\quad\textnormal{and}\quad \Pic(\Rep(E(n))\iso \BW(k) \times (k,+)^{n(n+1)/2}$$
where $\BW(k)$ denotes the Brauer-Wall group (the corresponding Azumaya algebras are $\Zz_2$-graded). It is known that $\BW(\CC)\iso \Zz/2\Zz$ and
$\BW(\RR)\iso \Zz/8\Zz$. The factor $(k,+)^{n(n+1)/2}$ corresponds to $Sym_n(k)$, the group of $n\times n$ symmetric matrices under addition.

\subsection*{Acknowledgments} This work was partially supported by the Mathematical Institute of the Serbian Academy of Sciences and Arts (MI SANU),
Serbia. The author wishes to thank to C\'esar Galindo and Mart\'in Mombelli for helpful discussions.

\bibliographystyle{amsalpha}

\end{document}